\newcommand\e\varepsilon
\newcommand\C{\mathbb C}
\newcommand\R{\mathbb R}
\newcommand\D{\mathbb D}
\renewcommand{\(}{\left(}
\renewcommand{\)}{\right)}
\newtheorem{theorem}{Theorem}[section]
\newtheorem{lemma}[theorem]{Lemma}
\newtheorem{proposition}[theorem]{Proposition}
\newtheorem{corollary}[theorem]{Corollary}
\begin{document}

\title[Large conformal metrics with prescribed Gaussian and geodesic curvatures]{Large conformal metrics with prescribed Gaussian and geodesic curvatures}

\author{Luca Battaglia}
\address[L. Battaglia]{Dipartimento di Matematica e Fisica, Università degli Studi Roma Tre, Largo S. Leonardo Murialdo, 00146 Roma, Italy}
\email{lbattaglia@mat.uniroma3.it}

\author{Mar\'ia Medina}
\address[Mar\'ia Medina]{Departamento de Matem\'aticas,
Universidad Aut\'onoma de Madrid,
Ciudad Universitaria de Cantoblanco,
28049 Madrid, Spain}
\email{maria.medina@uam.es}

\author{Angela Pistoia}
\address[A. Pistoia]{Dipartimento SBAI, ``Sapienza" Universit\`a di Roma, via Antonio Scarpa 16, 00161 Roma, Italy}
\email{angela.pistoia@uniroma1.it}

\begin{abstract}
We consider the problem of prescribing Gaussian and geodesic curvatures for a conformal metric on the unit disk. This is equivalent to solving the following P.D.E.
$$\begin{cases}-\Delta u=2K(z)e^u&\hbox{in}\;\D^2,\\
\partial_\nu u+2=2h(z)e^\frac u2&\hbox{on}\;\partial\D^2,\end{cases}$$
where $K,h$ are the prescribed curvatures.
We construct a family of conformal metrics with curvatures $K_\e,h_\e$ converging to $K,h$ respectively as $\e$ goes to $0$, which blows up at one boundary point 
under some generic assumptions.
\end{abstract}

\date\today
\subjclass{Primary: 35J25. Secondary: 35B40, 35B44}
\keywords{Prescribed curvature, conformal metrics, concentration phenomena, Ljapunov-Schmidt construction}
\thanks{M. Medina was supported by the European Union's Horizon 2020 research and innovation programme under the Marie Sklodowska-Curie grant agreement N 754446 and UGR Research and Knowledge Transfer Fund - Athenea3i. 
A. Pistoia was partially supported by Fondi di Ateneo ``Sapienza" Universit\`a di Roma (Italy).
M. Medina wants to acknowledge the hospitality of Universit\`a La Sapienza di Roma, where this work was carried out during a long visit in the academic year 2019--2020.}

\maketitle

\section{Introduction}

Given a compact Riemannian surface $(\Sigma,g)$, a classical problem in Riemannian geometry is the prescription of the Gaussian curvature on $\Sigma$ under a conformal change of metric, dating back to Berger \cite{be} and Kazdan and Warner \cite{kw}. This geometric problem can be rephrased into a PDE. Indeed if $K_g$ is the Gaussian curvature relative to the metric $g$ and $K$ is the prescribed curvature, we can write the new metric conformal to $g$ as $\tilde g=e^ug$, and $K$ is the Gaussian curvature relative to $\tilde g$ if $u$ solves the problem
\begin{equation}\label{pres}
-\Delta_g u+2 K_g(x)=2 Ke^u\;\hbox{in}\;\Sigma,
\end{equation}
where $\Delta_g$ is the Laplace-Beltrami operator associated to the metric $g$.
When $\Sigma$ is the standard sphere $\mathbb S^2$, problem \eqref{pres} is called the {\em Nirenberg problem} and is specially interesting from the geometrical point of view due to the effect of the noncompact group of conformal maps. The solvability of problem \eqref{pres} has been studied for several decades. We refer the interested reader to Chapter 6 in the book \cite{a}, where we can also find a comprehensive list of references.\\

If $\Sigma$ has a boundary, other than the Gaussian curvature in $\Sigma$ it is natural to prescribe also the geodesic curvature on the boundary $\partial\Sigma$. This geometric problem can be rephrased into a PDE too. Using the same notation as above, if $h_g$ is the geodesic curvature of the boundary relative to the metric $g$ and $h$ is the prescribed curvature, $h$ is the geodesic curvature relative to $\tilde g$ if $u$ solves the boundary value problem
\begin{equation}\label{p0}
\begin{cases}-\Delta_g u+2 K_g(x)=2 K(x)e^u&\hbox{in}\;\Sigma,\\
\partial_\nu u+2h_g(x)=2 h(x) e^\frac u2&\hbox{on}\;\partial\Sigma.
\end{cases}
\end{equation}
The literature about problem \eqref{p0} is not as wide as the one concerning \eqref{pres}. A natural obstruction to the existence of solutions is given by the identity, obtained integrating \eqref{p0} and applying the Gauss-Bonnet theorem,
\begin{equation}\label{gb}
\int\limits_\Sigma Ke^u+\oint\limits_{\partial\Sigma}he^\frac u2=2\pi\chi(\Sigma),
\end{equation}
where $\chi(\Sigma)$ is the Euler characteristic of $\Sigma$. When $K$ and $h$ are constants Brendle obtained in \cite{bre} solutions to \eqref{p0} using a parabolic flow. Zhang in \cite{zhang} and Li and Zhu in \cite{lz} obtained some classification results for the solutions to \eqref{p0} in the case of the half-plane (see also G\'alvez and Mira \cite{gm}). The case of nonconstant curvatures was addressed for the first time by Cherrier in \cite{che}, where the existence of a solution to \eqref{p0} is proved provided the curvatures are not too big in a reasonable geometric sense. Recently, L\'opez-Soriano, Malchiodi and Ruiz in \cite{lmr} considered surfaces with negative Euler characteristic and negative Gaussian curvature, studied the problem via a variational point of view and obtained solutions to \eqref{p0} by minimization and min-max techniques.

When $\Sigma$ is the standard disk $\D^2$ problem \eqref{p0} can be seen as a generalization of the Nirenberg problem to surfaces with boundary. A natural obstruction to the existence of solution is \eqref{gb} with $\chi\(\D^2\)=1$, which implies that $K$ or $h$ must be positive somewhere. Hamza in \cite{h} found some integrability conditions, analogous to those of Kazdan and Warner for the sphere, that are necessary for solving the problem. The case $h=0$ was firstly studied by Chang and Yang in \cite{chyg}, where the authors proved the existence of a solution to \eqref{p0} provided $K$ is positive somewhere. The case $K=0$ was firstly considered by Chang and Liu in \cite{cl}, where the authors found a solution to \eqref{p0} under suitable assumptions of the critical points of geodesic curvature $h$. Successively, Chang, Xu and Yang in \cite{cxy} proved the existence of a solution when the geodesic curvature is close enough to the constant, using a perturbative approach, and Liu and Huang in \cite{lh} found a solution in a symmetric setting. As far as we know the only result for non-constant curvatures is due to Cruz and Ruiz in \cite{cr} where they found a solution via a variational argument under symmetry assumptions. Actually, a careful analysis of blow-up sequences is needed to find solutions to \eqref{p0} in the more general situation. If $K=0$, a blow-up analysis has been performed by Guo and Liu in \cite{gl}. Recently, Jevnikar, L\'opez-Soriano, Medina and Ruiz in \cite{jlsmr} performed a complete blow-up analysis of problem \eqref{p0} for non-constant $K$ and $h$. In particular, they consider a sequence of solutions $u_n$ to the problems
$$\begin{cases}-\Delta u_n=2K_n(z) e^{u_n}&\hbox{in}\;\D^2,\\
\partial_\nu u_n+2=2h_n(z) e^\frac {u_n}2&\hbox{on}\;\partial\D^2,\end{cases}$$
with $K_n\to K$ in $C^2\(\overline{\D^2}\)$ and $h_n\to h$ in $C^2\(\partial\D^2\)$ with bounded mass, i.e.
$$\int\limits_{\D^2}e^{u_n}+\int\limits_{\partial\D^2}e^\frac{u_n}2\le C$$
and they prove that if $u_n$ blows-up, i.e. $\sup_{\D^2}u_n\to+\infty$, then  the blow-up can only occur at a unique point $\xi_0$ of the boundary such that $h(\xi_0)^2+K(\xi_0)>0$ and $h(\xi_0)+\sqrt{h(\xi_0)^2+K(\xi_0)}>0$, which has to be a critical point of the map
\begin{equation}\label{phi}
\varphi(\xi):=H(\xi)+\sqrt{H(\xi)^2+K(\xi)}\quad\quad\quad\xi\in\D^2,
\end{equation}
where $H$ is the harmonic extension of $h$, that is,
$$\begin{cases}\Delta H=0&\hbox{in}\;\D^2,\\
H=h&\hbox{on}\;\partial\D^2.\end{cases}$$

Motivated by the previous result, we asked the following natural question: {\it if $\xi_0\in\partial\D^2$ is a critical point of the function $\varphi$, does there exist a family of solutions which blow-up at $\xi_0$?}\\

In the present paper we will give a positive answer and we will also find blowing-up solutions with a method that yields accurate estimates of their behavior.
 
In order to state our main result, let us introduce the assumptions.

Let $K\in C^2\(\overline{\D^2}\)$ and $h\in C^2\(\partial\D^2\)$. Without loss of generality we can assume $\xi_0=1\in\partial\D^2$, and we impose the necessary conditions found in \cite{jlsmr}: $\xi_0=1$ is a critical point of the function $\varphi$ defined in \eqref{phi} satisfying
\begin{equation}\label{h2k}
h(1)^2+K(1)>0,\quad\quad\quad\varphi(1)=h(1)+\sqrt{h(1)^2+K(1)}>0.
\end{equation}
 In particular, $\nabla\varphi(1)=0$ is equivalent to
\begin{equation}\label{critico}\partial_1K(1)+2\varphi(1)(-\Delta)^\frac12h(1)=0\quad\hbox{and}\quad\partial_2K(1)+2\varphi(1)h'(1)=0.\end{equation}
Here, by $\partial_1,\partial_2$ we denote the standard partial derivatives and a similar notation is used for second derivatives. We also denote by $(-\Delta)^\frac12h$ the fractional Laplacian of $h$ along the boundary of the disk, i.e.,
$$(-\Delta)^\frac12h(z)=\frac1\pi\mbox{p.v.}\int_{\partial\D^2}\frac{h(z)-h(w)}{|z-w|^2}dw,$$
that naturally arises by noticing that $\partial_\nu H|_{\partial\D^2}=(-\Delta)^\frac12h$, where $\nu$ is the unit outward normal at the boundary.\\

We require the following {\it non-degeneracy} conditions (see Corollary \ref{mainorder} and Section \ref{6}) on the critical point:
\begin{eqnarray}
\label{cond10}&\Delta K(1)+4|\nabla H(1)|^2\ne0,\\
\label{cond100}&\partial_{22}K(1)+2\varphi(1)h''(1)\ne0,
\end{eqnarray}
and we additionally assume
\begin{equation}\label{cond1}
2K(1)+\varphi(1)h(1)\ne0.
\end{equation}
Notice that the case $K=0$ does not satisfy assumption \eqref{cond10} (it would contradict \eqref{critico}). It will be treated in a forthcoming paper. \\

Let us now introduce suitable linear perturbations of the functions $K$ and $h$ which lead to the existence of blowing-up solutions. More precisely let us denote
$$K_\e(z):=K(z)+\e G(z)\quad\hbox{and}\quad h_\e(z):=h(z)+\e I(z),$$
with $G\in C^2\(\overline{\D^2}\),I\in C^2\(\partial\D^2\)$. We choose these perturbations vanishing at the concentration point, i.e.
\begin{equation}\label{gi1}
G(1)=I(1)=0,
\end{equation}
and satisfying the {\it generic} condition 
\begin{equation}\label{cond1000} 
\partial_1G(1)+2\varphi(1)(-\Delta)^\frac12I(1)\not=\(\partial_2G(1)+2\varphi(1)I'(1)\)\frac{\partial_{12}K(1)+2\varphi(1)(-\Delta)^\frac12h'(1)}{\partial_{22}K(1)+2\varphi(1)h''(1)}.
\end{equation}
Let us briefly comment these assumptions. Condition \eqref{gi1} is not restrictive and it is just taken for simplicity.
On the other hand, although \eqref{cond1000} may look involved, it is actually a rather natural transversality condition on the perturbations $G,I$. In fact, if we perturb the map $\varphi$ as
$$\widetilde\varphi(\xi,\e):=H_\e(\xi)+\sqrt{H_\e(\xi)^2+K_\e(\xi)},$$
with $H_\e$ being the harmonic extension of $h_\e$, a simple computation shows that \eqref{cond1000} is equivalent to assuming the vectors $\partial_{\xi_2}\nabla_\xi\widetilde\varphi(0,0),\partial_\e\nabla_\xi\widetilde\varphi(0,0)$ to be not parallel, namely
$$\det\(\begin{array}{cc}\partial_{\xi_1,\xi_2}\widetilde\varphi(0,0)&\partial_{\xi_1,\e}\widetilde\varphi(0,0)\\\partial_{\xi_2,\xi_2}\widetilde\varphi(0,0)&\partial_{\xi_2,\e}\widetilde\varphi(0,0)\end{array}\)\ne0.$$

Finally, let us state our main result.
 
\begin{theorem}\label{main}
Assume \eqref{h2k}-\eqref{cond1000}. 
Then, there exists $\e_0>0$ such that, for every $\e\in(0,\e_0)$ or for every $\e\in(-\e_0,0)$ there exists a solution $u_\e$ of 
\begin{equation}\label{p}
\begin{cases}-\Delta u=2K_\e(z) e^u&\hbox{in}\;\D^2,\\
\partial_\nu u+2=2h_\e(z)e^\frac u2&\hbox{on}\;\partial\D^2,\end{cases}
\end{equation}
which blows-up at $1$ as $\e\to0$.\\
Moreover, there exist $\delta_\e>0$ and $\xi_\e\in\partial\D^2$ with
$$\delta_\e=O\(\frac{|\e|}{\log\frac1{|\e|}}\)\quad\hbox{and}\quad\xi_\e=1+O(|\e|)$$
such that
$$u_\e\(f_{\delta_\e,\xi_\e}(z)\)+2\log\left|f'_{\delta_\e,\xi_\e}(z)\right|-2\log\(\frac{2\varphi(1)}{\varphi(1)^2+K(1)|z|^2}\)=O\(\frac{|\e|}{\log\frac1{|\e|}}\)\quad\hbox{in}\quad H^1\(\D^2\).$$
Here $f_{\delta,\xi}$ is the conformal map 
\begin{equation}\label{confo}
f=f_{\delta,\xi}(z):=\frac{z+(1-\delta)\xi}{1+(1-\delta)\overline\xi z},\qquad \delta\in\R,\; z,\xi\in \C.
\end{equation}
\end{theorem}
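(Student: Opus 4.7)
\noindent\emph{Plan of proof.} The form of the conclusion --- a ``standard bubble'' pulled back by a conformal automorphism $f_{\delta_\e,\xi_\e}$ of $\D^2$ with an $H^1$-small remainder --- is the fingerprint of a Ljapunov--Schmidt reduction based on the conformal covariance of \eqref{p}: under $u\mapsto u\circ f+2\log|f'|$, problem \eqref{p} becomes the same problem with $K,h$ pulled back by $f$. Accordingly I take as approximate solution
$$
\mathcal U_{\delta,\xi}(w):=U\(f_{\delta,\xi}^{-1}(w)\)+2\log\left|\(f_{\delta,\xi}^{-1}\)'(w)\right|,\qquad U(z):=2\log\frac{2\varphi(1)}{\varphi(1)^2+K(1)|z|^2},
$$
where $U$ is the bubble that solves the limiting constant-curvature problem on $\D^2$ with $K\equiv K(1)$, $h\equiv h(1)$ and mass $\varphi(1)$; the non-vanishing hypothesis \eqref{cond1} makes sure that this bubble is non-degenerate as a profile. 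The plan is then (i)~to estimate the error of $\mathcal U_{\delta,\xi}$ in \eqref{p} via Taylor expansions of $K_\e,h_\e$ at $1$, using \eqref{critico} and \eqref{gi1}; (ii)~to invert the linearized operator at $\mathcal U_{\delta,\xi}$ modulo its natural kernel; (iii)~to solve the infinite-dimensional piece by contraction; (iv)~to determine $(\delta_\e,\xi_\e)$ from a two-dimensional reduced system.

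\medskip

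For step (ii), pulling back by $f_{\delta,\xi}$ conjugates the linearization at $\mathcal U_{\delta,\xi}$ to the linearization of the constant-curvature problem at $U$, whose bounded kernel under the corresponding oblique Robin-type boundary condition is two-dimensional and generated by the infinitesimal conformal automorphisms of $\D^2$ fixing the boundary point $1$. On the $\mathcal U_{\delta,\xi}$ side these correspond to derivatives with respect to $\delta$ and to the tangential component of $\xi\in\partial\D^2$. A standard a priori weighted estimate then yields uniform invertibility on the orthogonal complement of this kernel, and step (iii) produces a remainder $\phi_\e(\delta,\xi)$ of size $O(|\e|/\log(1/|\e|))$ in $H^1(\D^2)$, exactly the rate announced in the theorem.

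\medskip

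Step (iv) reduces to solving a system $\mathcal F_\e(\delta,\xi)=0\in\R^2$ obtained by testing the equation for $\mathcal U_{\delta,\xi}+\phi_\e$ against the two kernel elements. Expanding $\mathcal F_0$ at $(\delta,\xi)=(0,1)$ using \eqref{critico} and the explicit form of $\mathcal U_{\delta,\xi}$, one finds that its Jacobian is, up to nonzero constants controlled by \eqref{cond1}, block-triangular with diagonal entries proportional to $\Delta K(1)+4|\nabla H(1)|^2$ and to $\partial_{22}K(1)+2\varphi(1)h''(1)$; these are nonzero precisely by \eqref{cond10} and \eqref{cond100}. The $\e$-derivative of $\mathcal F_\e$ at $(0,1)$ turns out to be the vector whose components are the two sides of \eqref{cond1000}, so \eqref{cond1000} is exactly the transversality condition under which the implicit function theorem (applied in rescaled variables $\delta/\e$, $(\xi-1)/\e$) produces a zero of $\mathcal F_\e$ with the announced sizes; the restriction of $\e$ to one sign only comes from the logarithmic correction appearing at the $\delta$-component of the reduced equation.

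\medskip

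The main obstacle is step (ii): as $\delta\to 0$ the bubble concentrates at a boundary point, and the oblique boundary condition inherited from $h$ makes the classification of bounded solutions of the linearized problem on $\D^2$ and the uniform-in-$\delta$ invertibility estimate genuinely more delicate than in the purely interior Nirenberg-type setting. Once this linear theory is in place, everything else is driven by algebraic expansions that are made transparent by the conformal covariance of \eqref{p} and by the specific structure of the assumptions \eqref{cond10}--\eqref{cond1000}.
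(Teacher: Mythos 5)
Your high-level plan — conformal pull-back of the equation, a bubble as first approximation, Ljapunov--Schmidt reduction, kernel from derivatives of the bubble family, transversality from \eqref{cond1000}, and the one-sign restriction on $\e$ coming from the logarithmic correction in the $\delta$-direction — matches the paper. The identification of the diagonal of the reduced Jacobian with $\Delta K(1)+4|\nabla H(1)|^2$ and $\partial_{22}K(1)+2\varphi(1)h''(1)$ is also correct. However, the proposal misses three intertwined ingredients that the paper regards as essential, and without which the argument as written would not close.

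First, taking only the bubble (your $\mathcal U_{\delta,\xi}$) as ansatz produces an error too large to run the reduction. The term $h(f(z))-h(\xi)$ on $\partial\D^2$ is pointwise $O\bigl(\delta/(\delta+|z+\xi|)\bigr)$, so its $L^p(\partial\D^2)$-norm is $O(\delta^{1/p})\gg\delta$; this forces $\|\phi\|$ to be of that same, larger order, and the quadratic term $\|\phi\|^2$ then pollutes the reduced equations at leading order. The paper deals with this by introducing a harmonic correction $W$ (equation \eqref{Wbox}) that solves a linear boundary problem with data $h\circ f-h(\xi)$; once $W$ is added, the residual error drops to $O(\delta)$ and the projections onto $\mathcal Z_1,\mathcal Z_2$ can be computed to the required precision (Propositions~\ref{EintZ1}--\ref{EbdrZ2}). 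Your proposal has no analogue of $W$ and asserts $\|\phi\|=O(|\e|/\log\tfrac1{|\e|})$ without this device; that estimate does not follow from the bare bubble ansatz.

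Second, and more structurally, the reduction is three-dimensional, not two. The linearized operator at the bubble annihilates constants in the natural energy pairing, so the paper inverts it only on the space $\mathbf H_\xi$ which excludes constants; that is exactly the role of hypothesis \eqref{cond1}, which you instead attribute vaguely to ``non-degeneracy of the profile.'' The price is an additional constant $c_0$ in the projected problem \eqref{projProb} (alongside $c_1,c_2$), and one must introduce a third free parameter, the additive constant $\tau$ in the ansatz $V+W+\tau+\phi$, to kill it. The reduced system is the $3\times3$ system \eqref{sis-fin} in $(\eta,\delta,\tau)$, and its solvability uses \eqref{cond1} through the entry $\mathfrak a_{33}=2K(1)+\varphi(1)h(1)$. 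Your $\mathcal F_\e(\delta,\xi)=0\in\R^2$ omits this constraint entirely; solving only the two kernel projections does not yield a genuine solution of \eqref{p}. Once $W$, $\tau$, and the $3\times3$ structure are incorporated, the remainder of your outline — contraction for $\phi$, expansion of the projections, transversality from \eqref{cond1000}, and the sign restriction on $\e$ from the $\delta\log\tfrac1\delta$ term — agrees with the paper's proof.
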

${}$\\
From now on we will make the identification $\R^2\simeq \C$, and thus we will understand every point $z, \xi$ as points in the complex plane. Let us briefly sketch the idea of the proof and clarify the role of the assumptions. First of all we remark that $u$ is a solution of \eqref{p} if and only if the function
$$v(z):=u(f(z))+2\log|f'(z)|$$ solves 
\begin{equation}\label{probV}
\begin{cases}-\Delta v=2K_\e(f(z))e^v&\hbox{in}\;\D^2,\\
\partial_\nu v+2=2h_\e(f(z))e^\frac v2&\hbox{on}\;\partial\D^2.\end{cases}
\end{equation}
where $f=f_{\delta,\xi}$ is the conformal map defined in \eqref{confo}. We will choose $\delta=\delta_\e\underset{\e\to0}\to0$ and $\xi=\xi_\e\in\partial\D^2$ with $\xi_\e\underset{\e\to0}\to\xi_0=1$ (equivalently, $\xi=e^{\imath\eta}$ with $\eta=\eta_\e\underset{\e\to0}\to0$). 

To prove Theorem \ref{main} we will perform a Lyapunov-Schmidt type reduction to find a solution of \eqref{probV} which requires some delicate and quite technical adjustements. Roughly speaking, we will construct a solution of the form
$$v(z)=V (z)+o_\e(1),$$
where 
\begin{equation}\label{Vbox}
\boxed{ V=V_\xi(z):=2\log\(\frac{2\varphi(\xi)}{\varphi(\xi)^2+K(\xi)|z|^2}\),}
\end{equation}
with $\varphi$ defined in \eqref{phi}. $V$ satisfies 
\begin{equation}\label{V}
\begin{cases}-\Delta V=2K(\xi)e^V&\hbox{in}\;\D^2\\
\partial_\nu V+2=2h(\xi)e^\frac V2&\hbox{on}\;\partial\D^2.\end{cases}
\end{equation}
Taking $V$ as first approximation to carry on the reduction procedure is not enough since it produces a too large error. To overcome this difficulty we need to improve the ansatz by adding the term 
\begin{equation}\label{Wbox}
\boxed{W=W_\xi(z):=-\frac2\pi \frac{2\varphi(\xi)}{\varphi(\xi)^2+K(\xi)}\int\limits_{\partial\D^2}\log|z-w|(h(f(w))-h(\xi))dw,}\end{equation}
which solves the linear problem
\begin{equation}\label{eqw}
\begin{cases}-\Delta W=0&\hbox{in}\;\D^2,\\
\partial_\nu W=2(h(f(z))-h(\xi))e^\frac V2-\frac1\pi e^\frac{V}2\int\limits_{\partial\D^2}(h(f(w))-h(\xi))dw&\hbox{on}\;\partial\D^2.\end{cases}
\end{equation}
Notice that $V$ is constant along the boundary $\partial\D^2$. On the other hand, the solution to \eqref{eqw} is not unique but invariant under the addition of constants and so we need to refine again the ansatz by adding a small constant $\tau=\tau_\xi$. Finally, we will look for a solution of the form
$$v(z)=V(z)+W(z)+\tau+\phi(z),$$
where $\phi$ is a small term which has to be found via an accurate linear theory.\\
We point out that the presence of the extra parameter $\tau$ is not an innocent matter. Actually, it is necessary because we need to perform the linear theory in a suitable Hilbert space which does not contain the constants, which is possible thanks to assumption \eqref{cond1}. 
Once the error term $\phi$ is found, the existence of the solution $v$ with the prescribed profile is reduced to the existence of the three parameters $\delta$, $\xi$ and $\tau$, which is ensured by assumptions \eqref{cond10}, \eqref{cond100}, \eqref{gi1} and \eqref{cond1000}.\\
 
The paper is organized as follows: in Section 2 we detail the ansatz for the solution and precise estimates of the correction term $W$ and the associated error. We also tranform problem \eqref{probV} into a problem on $\phi$ whose main operator is the linearized, $\mathcal L_0$, associated to \eqref{V}. In Section 3 we develop the invertibility theory for $\mathcal L_0$ in the case of a linear problem. Section 4 is devoted to solve a non linear projected problem related to the problem that $\phi$ must satisfy. Section 5 contains the estimates on the projections of every term in the problem on the elements of the kernel, and Section 6 makes use of this information to perform the finite dimensional reduction, concluding the proof of Theorem \ref{main}. Finally, the appendices compile useful computations related to the estimates in Section 2 and Section 5.

\medskip

\section{Ansatz and error estimates}

We look for a solution of \eqref{probV} as
$$\boxed{v(z)=V(z)+W(z)+\tau+\phi(z).}$$
Here, $V=V_\xi$ and $W=W_\xi$ are given in \eqref{Vbox} and \eqref{Wbox} respectively, $\tau=\tau_\xi$ is a constant and $\phi(z)=\phi_{\xi,\delta,\tau}(z)$ is a smaller term. We want to find the three parameters $\delta=\delta_\e>0$ with $\delta_\e\underset{\e\to0}\to0$, $\xi=\xi_\e\in\partial\D^2$ with $\xi_\e\underset{\e\to0}\to1$ (or equivalently, $\xi=e^{\imath\eta}$ with $\eta=\eta_\e\underset{\e\to0}\to0$) and $\tau_\e\underset{\e\to0}\to0$ such that $\phi$ solves
$$\begin{cases}-\Delta\(V+W+\tau+\phi\)=2K_\e(f(z))e^{V+W+\tau+\phi}&\hbox{in}\;\D^2,\\
\partial_\nu(V+W+\tau+\phi)+2=2h_\e(f(z))e^\frac{V+W+\tau+\phi}2&\hbox{on}\;\partial\D^2,\end{cases}$$
where $f=f_{\delta,\xi}$ is the conformal map defined in \eqref{confo}. We require $\delta>0$ since, in this conformal map, we want $(1-\delta)\xi$ to be a sequence of points that approximates $\xi_0$ from the interior of the disk. The previous problem is equivalent to
\begin{eqnarray*}
-\Delta\phi-2K(\xi)e^V\phi&=&2\(K_\e(f(z))e^{W+\tau}-K(\xi)\)e^V\\
&+&2\(K_\e(f(z))e^{W+\tau}-K(\xi)\)e^V\phi\\
&+&2K_\e(f(z))e^{V+W+\tau}\(e^\phi-1-\phi\)\quad\hbox{in}\;\D^2,
\end{eqnarray*}
with boundary condition
\begin{eqnarray*}
\partial_\nu\phi-h(\xi)e^\frac V2\phi&=&2\(h_\e(f(z))e^\frac{W+\tau}2-h(f(z))\)e^\frac V2+\frac1\pi e^\frac{V}2\int\limits_{\partial\D^2}(h(f(w))-h(\xi))dw\\
&+&\(h_\e(f(z))e^\frac{W+\tau}2-h(\xi)\)e^\frac V2\phi\\
&+&2h_\e(f(z))e^\frac{V+W+\tau}2\(e^{\frac\phi2}-1-\frac\phi2\)\quad\hbox{on}\;\partial\D^2;
\end{eqnarray*}
which can be rewritten as
\begin{equation}\label{eqphi}
\begin{cases}\mathcal L^\mathrm{Int}_0\phi=\mathcal E^\mathrm{Int}+\mathcal L^\mathrm{Int}\phi+\mathcal N^\mathrm{Int}(\phi)&\hbox{in}\;\D^2,\\
\mathcal L^\partial_0\phi=\mathcal E^\partial+\mathcal L^\partial\phi+\mathcal N^\partial(\phi)&\hbox{on}\;\partial\D^2,\end{cases}
\end{equation}
with
\begin{eqnarray}
\nonumber\mathcal L^\mathrm{Int}_0\phi&:=&-\Delta\phi-2K(\xi)e^V\phi,\\
\label{eint}\mathcal E^\mathrm{Int}&:=&2\(K_\e(f(z))e^{W+\tau}-K(\xi)\)e^V,\\
\nonumber\mathcal L^\mathrm{Int}\phi&:=&2\(K_\e(f(z))e^{W+\tau}-K(\xi)\)e^V\phi,\\
\nonumber\mathcal N^\mathrm{Int}(\phi)&:=&2K_\e(f(z))e^{V+W+\tau}\(e^\phi-1-\phi\),
\end{eqnarray}
and, for $z\in \partial\D^2$,
\begin{eqnarray}
\nonumber\mathcal L^\partial_0\phi&:=&\partial_\nu\phi-h(\xi)e^\frac V2\phi,\\
\label{ebdr}\mathcal E^\partial&:=&2\(h_\e(f(z))e^\frac{W+\tau}2-h(f(z))\)e^\frac V2+\frac1\pi e^\frac{V}2\int\limits_{\partial\D^2}(h(f(w))-h(\xi))dw,\\
\nonumber\mathcal L^\partial\phi&:=&\(h_\e(f(z))e^\frac{W+\tau}2-h(\xi)\)e^\frac V2\phi,\\
\nonumber\mathcal N^\partial(\phi)&:=&2h_\e(f(z))e^\frac{V+W+\tau}2\(e^{\frac\phi2}-1-\frac\phi2\).
\end{eqnarray}
We require the parameters $\delta$, $\tau$ and $\eta$ (recall that $\xi=e^{\imath\eta}$) to satisfy
\begin{equation}\label{orders}
\boxed{\delta=O\(\frac{|\e|}{\log\frac1{|\e|}}\),\quad\tau=O\(\frac{|\e|}{\log\frac1{|\e|}}\),\quad\eta=O(|\e|).}
\end{equation}
This imposition on the orders is naturally justified at the final step of the finite dimensional reduction (see Section 6). We will assume them in advance in order to simplify the writing of the article. 
\\

\begin{lemma}\label{w}

The correction $W$ verifies
\begin{eqnarray*}
W(z)&=&\frac{8\varphi(\xi)}{\varphi(\xi)^2+K(\xi)}\delta\((-\Delta)^\frac12h(\xi)\log|z+\xi|+h'(\xi)\arctan\frac{\left\langle z,\xi^\perp\right\rangle}{1+\langle z,\xi\rangle}\)\\
&+&O\(\frac{\delta^2}{\delta+|z+\xi|}\(1+\left|\log\frac{|z+\xi|}\delta\right|\)\).
\end{eqnarray*}
In particular,
\begin{equation}\label{wi}W(z)=O\(\delta\(1+\log\frac1{|z+\xi|}\)\)\end{equation}
and  for any $p\in[1,+\infty)$ and $\delta$ sufficiently small one has
\begin{equation}\label{wp}
\|W\|_{L^p\(\D^2\)}+\|W\|_{L^p\(\partial\D^2\)}=O(\delta)\ \hbox{and}\
\left\|e^{|W|}\right\|_{L^p\(\D^2\)}+\left\|e^\frac{|W|}2\right\|_{L^p\(\partial\D^2\)}=O(1).
\end{equation}

\end{lemma}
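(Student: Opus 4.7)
By rotation invariance of the construction I may assume $\xi = 1$. Parameterizing $w = e^{i\theta}$ with $\theta\in(-\pi,\pi]$ and writing $f(e^{i\theta}) = e^{i\gamma(\theta)}$, a direct computation of $f'$ gives $\gamma'(\theta) = \delta(2-\delta)/|1+(1-\delta)e^{i\theta}|^2$, which is $O(\delta)$ away from $\theta=\pi$ and of order $1/\delta$ at $\theta=\pi$. The map $f$ therefore dilates a $\delta$-neighborhood of $-\xi$ onto the whole boundary, producing the concentration of $h\circ f - h(\xi)$. The strategy is to split the integral defining $W$ at $|\theta-\pi|=\sqrt{\delta}$ into a far region $F$ and a near region $N$, extract the leading contribution from each, and control the error.

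On $F$, integrating $\gamma'$ gives $\gamma(\theta) = \delta \tan(\theta/2) + O(\delta^2\sec^2(\theta/2))$, so Taylor expansion of $h$ yields $h(e^{i\gamma(\theta)}) - h(1) = h'(1)\delta \tan(\theta/2) + O(\delta^2\sec^2(\theta/2))$. I would evaluate the leading piece $h'(1)\delta \int_F \log|z - e^{i\theta}|\tan(\theta/2)\, d\theta$ by relating it to the full principal-value integral $\mathrm{p.v.}\int_{-\pi}^\pi \log|z-e^{i\theta}|\tan(\theta/2)\, d\theta = -2\pi\arctan\frac{\mathrm{Im}(z)}{1+\mathrm{Re}(z)}$, which in turn follows from the Fourier pairing of $\log|z-e^{i\theta}| = -\sum_{n\geq 1}\mathrm{Re}(z^n e^{-in\theta})/n$ with the sine series $\tan(\theta/2) = 2\sum_{n\geq 1}(-1)^{n+1}\sin(n\theta)$.

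On $N$, I rescale by $\theta = \pi + \delta t$. To leading order $f(-e^{i\delta t}) = -(1+it)/(1-it)$, equivalently $\gamma(\pi+\delta t)\equiv \pi+2\arctan(t)\pmod{2\pi}$. Freezing $\log|z-e^{i\theta}|$ to its value $\log|z+1|$ at $\theta=\pi$ and extending the $t$-integral to $\R$, the main contribution becomes
\begin{equation*}\delta\log|z+1|\int_{\R}\big(h(-e^{2i\arctan t})-h(1)\big)\,dt.\end{equation*}
The substitution $u = 2\arctan(t)+\pi$, together with $|1-e^{iu}|^2 = 4\sin^2(u/2)$, identifies this with $-2\pi(-\Delta)^{1/2}h(1)$ by the definition of the half-Laplacian on the circle. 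Multiplying the assembled contribution by the prefactor $-\frac{2}{\pi}\cdot\frac{2\varphi(1)}{\varphi(1)^2+K(1)}$ reproduces the claimed main term.

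The hard part is the uniform error bound $O\left(\delta^2(1+\big|\log(|z+\xi|/\delta)\big|)/(\delta+|z+\xi|)\right)$, since the Taylor expansions of $\gamma$ and of $\log|z-e^{i\theta}|$ both degenerate as $z\to-\xi$. One should split into the regimes $|z+\xi|\gtrsim 1$, $\delta\ll|z+\xi|\ll 1$, and $|z+\xi|\lesssim\delta$; in the last regime, the Taylor approximation of the log kernel must be replaced by direct integration of $\big|\log|z-e^{i\theta}|\big|$ against the uniformly bounded integrand on the near side, which generates the $\log(\delta/|z+\xi|)$ factor. The $L^p$ estimates \eqref{wp} then follow directly from \eqref{wi}: $\|W\|_{L^p}\lesssim \delta\,\|1+\log(1/|z+\xi|)\|_{L^p} = O(\delta)$ because $\log$ lies in every $L^p$, and $|W|\leq C\delta(1+\big|\log|z+\xi|\big|)$ gives $e^{|W|}\leq C|z+\xi|^{-C\delta}$, whose $L^p$ norm is bounded for $\delta$ sufficiently small.
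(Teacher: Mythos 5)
Your approach differs from the paper's in a meaningful way, and the leading-order identifications (the $(-\Delta)^{1/2}h(\xi)\log|z+\xi|$ term from the near zone, the $h'(\xi)\arctan(\cdot)$ term from the far zone, the change of variables $u=2\arctan t+\pi$ recovering the half-Laplacian, the Fourier pairing of $\log|z-e^{i\theta}|$ against $\tan(\theta/2)$) are all correct. However, the error analysis as you have sketched it would not reach the bound that the lemma claims, and this bound is used with its full strength later in the paper (e.g. in Propositions~\ref{EbdrZ1}, \ref{EbdrZ2}), so the sharpness matters.

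Concretely, with a cut at $|\theta-\pi|=\sqrt{\delta}$, the Taylor remainder on the far side is $O(\delta^2\sec^2(\theta/2))$ per unit $\theta$, and $\int_{\sqrt{\delta}}^{\pi}\delta^2/s^2\,ds = O(\delta^{3/2})$. Similarly, passing from the truncated far-region principal value of $\log|z-e^{i\theta}|\tan(\theta/2)$ to the full one costs, for $|z+\xi|\sim 1$, an amount of order $\delta^{3/2}$ (the $\log$ kernel is not exactly even across $\theta=\pi$, and its odd part is $O(s/|z+\xi|)$). Both are far larger than the claimed $O\left(\frac{\delta^2}{\delta+|z+\xi|}\left(1+\big|\log\frac{|z+\xi|}{\delta}\big|\right)\right)$, which for $|z+\xi|\sim 1$ is $O(\delta^2\log\frac{1}{\delta})$. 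No choice of cut-off radius fixes this: splitting earlier (at $\delta$) makes the far Taylor error $O(\delta)$, splitting later only moves the loss to the near-side errors. The root cause is that $\tan(\theta/2)$ is the $\delta\to0$ limit of the relevant kernel, and subtracting off a limit rather than the actual $\delta$-dependent object forces you to pay in the matching region.

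The paper avoids the split entirely by subtracting the exact $\delta$-dependent function $\Theta(w)=\Theta_{\delta,\xi}(w) = \frac{2\langle w,\xi^\perp\rangle}{1+(1-\delta)^2+2(1-\delta)\langle w,\xi\rangle}$, for which $h(f(w))-h(\xi)-\delta h'(\xi)\Theta(w)=O\left(\frac{\delta^2}{(\delta+|w+\xi|)^2}\right)$ \emph{uniformly on the whole circle} (Proposition~\ref{f}), not just away from $w=-\xi$. Two structural facts then do the work: $\int_{\partial\D^2}\Theta=0$ (odd symmetry), so one can apply Green's representation to identify $\int\log\frac{|z-w|}{|z+\xi|}\Theta(w)\,dw$ with the explicit harmonic function $-\frac{2\pi}{1-\delta}\arctan\frac{(1-\delta)\langle z,\xi^\perp\rangle}{1+(1-\delta)\langle z,\xi\rangle}$ and then Taylor in $\delta$; and the remainder integral is controlled by partitioning according to the relative sizes of $|z-w|$ and $|z+\xi|$, which is where the $\frac{\delta^2}{\delta+|z+\xi|}\big(1+|\log\frac{|z+\xi|}{\delta}|\big)$ shape comes from. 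If you want to salvage your near/far decomposition you would essentially have to reintroduce $\Theta$ as the subtracted kernel in the far zone; at that point you are doing the paper's argument.

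The last part of your proposal — deducing \eqref{wi} from the expansion and then \eqref{wp} from \eqref{wi} via $e^{|W|}\le C|z+\xi|^{-C\delta}\in L^p$ for $\delta$ small — is correct and is exactly what the paper does.
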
\

\begin{proof}
By Proposition \ref{f} we write
\begin{eqnarray*}
W(z)&=&-\frac2\pi\int\limits_{\partial\D^2}\log\left|z-w\right|(h(f(w))-h(\xi))\frac{2\varphi(\xi)}{\varphi(\xi)^2+K(\xi)}dw\\
&=&-\frac{4\varphi(\xi)}{\pi\(\varphi(\xi)^2+K(\xi)\)}\(\log|z+\xi|\underbrace{\int\limits_{\partial\D^2}(h(f(w))-h(\xi))dw}_{=:I_1}+\delta h'(\xi)\underbrace{\int\limits_{\partial\D^2}\log\frac{|z-w|}{|z+\xi|}\Theta(w)dw}_{=:I_2}\right.\\
&+&\left.\underbrace{\int\limits_{\partial\D^2}\log\frac{|z-w|}{|z+\xi|}\(h(f(w))-h(\xi)-\delta h'(\xi)\Theta(w)\)dw}_{=:I_3}\),
\end{eqnarray*}
where
$$\Theta(w):=\frac{2\left\langle w,\xi^\perp\right\rangle}{1+(1-\delta)^2+2(1-\delta)\langle w,\xi\rangle}.$$
From Proposition \ref{intFracLap} we know that
$$I_1=-2\pi\delta(-\Delta)^\frac12h(\xi)+O\(\delta^2\);$$
hence the result follows if we prove
\begin{eqnarray}
\label{ii2}I_2&=&-2\pi\arctan\frac{\left\langle z,\xi^\perp\right\rangle}{1+\langle z,\xi\rangle}+O\(\frac{\delta}{\delta+|z+\xi|}\),\\
\label{ii3}I_3&=&O(A(z)),
\end{eqnarray}
where 
$$A(z)=A_{\delta,\xi}(z):=\frac{\delta^2}{\delta+|z+\xi|}\(1+\left|\log\frac{|z+\xi|}\delta\right|\).$$
To deal with \eqref{ii2} we use the fact that $\Theta$ is odd with respect to the map $w\mapsto\xi^2\overline w$, so that
$$\int\limits_{\partial\D^2}\Theta(w)dw=0,$$
and, by Green's representation formula, $I_2$ is the solution to
$$\begin{cases}-\Delta I_2=0&\hbox{in}\;\D^2\\
\partial_\nu I_2=-\pi\Theta&\hbox{on}\;\partial\D^2,\end{cases}$$
that is
$$I_2=-\frac{2\pi}{1-\delta}\arctan\frac{(1-\delta)\left\langle z,\xi^\perp\right\rangle}{1+(1-\delta)\langle z,\xi\rangle}.$$
Using the fundamental theorem of calculus and estimate \eqref{deltazxi} we get
\begin{eqnarray*}
&&-\frac{2\pi}{1-\delta}\arctan\frac{(1-\delta)\left\langle z,\xi^\perp\right\rangle}{1+(1-\delta)\langle z,\xi\rangle}-\(-2\pi\arctan\frac{\left\langle z,\xi^\perp\right\rangle}{1+\langle z,\xi\rangle}\)\\
&=&2\pi\(\arctan\frac{\left\langle z,\xi^\perp\right\rangle}{1+\langle z,\xi\rangle}-\arctan\frac{(1-\delta)\left\langle z,\xi^\perp\right\rangle}{1+(1-\delta)\langle z,\xi\rangle}\)+2\pi\frac\delta{1-\delta}\arctan\frac{(1-\delta)\left\langle z,\xi^\perp\right\rangle}{1+(1-\delta)\langle z,\xi\rangle}\\
&=&-2\pi\int_0^{\delta}\frac{\left\langle z,\xi^\perp\right\rangle}{\left|1+(1-s)\overline\xi z\right|^2}ds+O(\delta)\\
&=&O\(\int_0^\delta\frac{|z+\xi|}{(s+|z+\xi|)^2}ds+\delta\)\\
&=&O\(\frac\delta{\delta+|z+\xi|}\)
\end{eqnarray*}
hence \eqref{ii2} is proved.

We are left with showing \eqref{ii3}; thanks to Proposition \ref{f}, we need to show:
$$\left|\int\limits_{\partial\D^2}\log\frac{|z-w|}{|z+\xi|}\(\frac\delta{\delta+|w+\xi|}\)^2dw\right|=O(A(z)).$$
We split the integral in three parts, depending on whether $z$ is much closer to $w$ than to $-\xi$, much farther, or the distances are comparable.

If $|z-w|\le\frac{|z+\xi|}2$, then $|w+\xi|\ge\frac{|z+\xi|}2$, and
\begin{eqnarray*}
\left|\int\limits_{\left\{|z-w|\le\frac{|z+\xi|}2\right\}}\log\frac{|z-w|}{|z+\xi|}\(\frac\delta{\delta+|w+\xi|}\)^2dw\right|
&=&O\(\(\frac\delta{\delta+\frac{|z+\xi|}2}\)^2\int\limits_{\left\{|z-w|\le\frac{|z+\xi|}2\right\}}\log\frac{|z+\xi|}{|z-w|}dw\)\\
&=&O\(\(\frac\delta{\delta+|z+\xi|}\)^2\)|z+\xi|\int\limits_{\left\{t<\frac12\right\}}\log\frac1tdt\\
&=&O\(\frac{\delta^2}{\delta+|z+\xi|}\)\\
&=&O(A(z)).
\end{eqnarray*}
If $\frac{|z+\xi|}2<|z-w|\le2|z+\xi|$, then
$$\left|\log\frac{|z-w|}{|z+\xi|}\right|=\left|\log\(1+\frac{|z-w|-|z+\xi|}{|z+\xi|}\)\right|=O\(\frac{|z-w|-|z+\xi|}{|z+\xi|}\)=O\(\frac{|w+\xi|}{|z+\xi|}\);$$
moreover, $|w+\xi|\le3|z+\xi|$, therefore
\begin{eqnarray*}
&&\int\limits_{\left\{\frac{|z+\xi|}2<|z-w|\le2|z+\xi|\right\}}\left|\log\frac{|z-w|}{|z+\xi|}\right|\(\frac\delta{\delta+|w+\xi|}\)^2dw\\
&=&O\(\int\limits_{\{|w+\xi|\le3|z+\xi|\}}\frac{|w+\xi|}{|z+\xi|}\(\frac\delta{\delta+|w+\xi|}\)^2dw\)\\
&=&O\(\int\limits_{\{|w+\xi|\le\min\{\delta,3|z+\xi|\}\}}\frac{|w+\xi|}{|z+\xi|}dw+\int\limits_{\{\min\{\delta,3|z+\xi|\}\le|w+\xi|\le3|z+\xi|\}}\frac{\delta^2}{|z+\xi||w+\xi|}dw\)\\
&=&O\(\frac{\min\{\delta,3|z+\xi|\}^2}{|z+\xi|}+\frac{\delta^2}{|z+\xi|}\log^+\frac{3|z+\xi|}\delta\)\\
&=&O(A(z)),
\end{eqnarray*}
where $\log^+$ means the positive part, that is, $\log^+\frac{3|z+\xi|}\delta:=\max\left\{0, \frac{3|z+\xi|}\delta\right\}$.
Finally, if $\frac{|z-w|}{|z+\xi|}>2$, then
$$\log\frac{|z-w|}{|z+\xi|}=O\(1+\log\(\frac{|z-w|}{|z+\xi|}-1\)\)=O\(1+\log\frac{|w+\xi|}{|z+\xi|}\);$$
moreover, $|w+\xi|\ge|z+\xi|$, therefore
\begin{eqnarray*}
&&\int\limits_{\left\{|z-w|>2|z+\xi|\right\}}\log\frac{|z-w|}{|z+\xi|}\(\frac\delta{\delta+|w+\xi|}\)^2dw\\
&=&O\(\int\limits_{\{|w+\xi|>|z+\xi|\}}\(1+\log\frac{|w+\xi|}{|z+\xi|}\)\(\frac\delta{\delta+|w+\xi|}\)^2dw\)\\
&=&O\(\int\limits_{\{|z+\xi|<|w+\xi|\le\delta\}}\(1+\log\frac{|w+\xi|}{|z+\xi|}\)dw+\int\limits_{\{|w+\xi|>\delta\}}\(1+\log\frac{|w+\xi|}{|z+\xi|}\)\frac{\delta^2}{|w+\xi|^2}dw\)\\
&=&O\(\int\limits_{\left\{1<t\le\frac\delta{|z+\xi|}\right\}}|z+\xi|(1+\log t)dt+\int\limits_{\left\{t>\frac\delta{|z+\xi|}\right\}}\frac{\delta^2}{|z+\xi|}\frac{1+\log t}{t^2}dt\)\\
&=&O\(\delta\log\frac\delta{|z+\xi|}+\frac{\delta^2}{|z+\xi|}\)\\
&=&O(A(z)).
\end{eqnarray*}

The pointwise estimate \eqref{wi} and the $L^p$-estimates \eqref{wp} follow immediately.
\end{proof}\

\begin{proposition}\label{e}
Consider $\mathcal E^\mathrm{Int},\mathcal E^\partial$ the interior and boundary error terms defined in \eqref{eint}, \eqref{ebdr} respectively. Then, for $1<p<2$,
$$\|\mathcal E\|_p:=\left\|\mathcal E^\mathrm{Int}\right\|_{L^p\(\D^2\)}+\left\|\mathcal E^\partial\right\|_{L^p\(\partial\D^2\)}=O\(\frac{|\e|}{\log\frac1{|\e|}}\).$$
\end{proposition}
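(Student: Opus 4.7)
The plan is to split each of $\mathcal{E}^{\mathrm{Int}}$ and $\mathcal{E}^\partial$ into three contributions: a Taylor piece from $K(f(z))-K(\xi)$ (respectively $h(f(z))-h(\xi)$), an exponential piece from $e^{W+\tau}-1$ (respectively $e^{(W+\tau)/2}-1$), and a genuine $\e$-perturbation involving $G$ (respectively $I$). Each summand will then be estimated in $L^p$ with $1<p<2$ by combining Lemma \ref{w}, the explicit conformal-map identity
\begin{equation*}
f(z)-\xi=\frac{\delta(z-\xi)}{1+(1-\delta)\overline\xi z},
\end{equation*}
and the parameter orders \eqref{orders}.

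For the interior, write $\mathcal{E}^{\mathrm{Int}}=2e^V[K(f(z))-K(\xi)]+2\e G(f(z))e^V+2K_\e(f(z))(e^{W+\tau}-1)e^V$. Since $K\in C^2$, $|K(f(z))-K(\xi)|\le C|f(z)-\xi|$. The constraint $z\in\overline{\D^2}$ forces $\operatorname{Re}(\overline\xi(z+\xi))\ge 0$, hence $|1+(1-\delta)\overline\xi z|\ge c(\delta+|z+\xi|)$, and polar coordinates centered at $-\xi$ yield
\begin{equation*}
\int_{\D^2}|f(z)-\xi|^p\,dz\le C\delta^p\int_0^{2}\frac{r\,dr}{(\delta+r)^p}=O(\delta^p)\qquad\text{for }1<p<2.
\end{equation*}
Using that $e^V$ is uniformly bounded on $\overline{\D^2}$ (indeed $\varphi(\xi)^2+K(\xi)=2\varphi(\xi)\sqrt{h(\xi)^2+K(\xi)}>0$ under \eqref{h2k}, so the denominator $\varphi(\xi)^2+K(\xi)|z|^2$ stays bounded below on $\overline{\D^2}$), this first piece is $O(\delta)$ in $L^p$. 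The second piece uses $G(1)=0$ together with $|f(z)-1|\le|f(z)-\xi|+\eta$ to give $O(\e(\delta+\eta))=O(\e^2)$. The third, via $e^{W+\tau}-1=(W+\tau)+O((W+\tau)^2 e^{|W|+|\tau|})$ together with \eqref{wp}, gives $O(\delta+\tau)$.

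The boundary term $\mathcal{E}^\partial$ decomposes analogously. The piece $2h(f(z))(e^{(W+\tau)/2}-1)e^{V/2}$ is $O(\delta+\tau)$ by the boundary part of \eqref{wp}, exploiting that $e^{V/2}=2\varphi(\xi)/(\varphi(\xi)^2+K(\xi))$ is constant on $\partial\D^2$. For $2\e I(f(z))e^{(W+\tau)/2}e^{V/2}$, using $I(1)=0$ and parametrizing $w=\xi e^{it}$ one obtains $\|f(\cdot)-\xi\|_{L^p(\partial\D^2)}=O(\delta^{1/p})$, where the heavy contribution comes from $t$ near $\pi$; since $\delta=O(|\e|/\log\frac1{|\e|})$, one then checks $\e\delta^{1/p}+\e\eta=o(|\e|/\log\frac1{|\e|})$. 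Finally, the integral term rewrites via Proposition \ref{intFracLap} as $\frac1\pi e^{V/2}(-2\pi\delta(-\Delta)^{1/2}h(\xi)+O(\delta^2))=O(\delta)$.

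The only real technical issue is the concentration of $|f(z)-\xi|$ at $z=-\xi$: this is precisely what forces $p<2$ in the interior (an extra $|\log\delta|$ appears at $p=2$). Everything else is a careful tracking of the scales $\delta,\tau=O(|\e|/\log\frac1{|\e|})$ and $\eta=O(|\e|)$, and the orders \eqref{orders} have been tuned so that every piece collapses into the advertised $O(|\e|/\log\frac1{|\e|})$.
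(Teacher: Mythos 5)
Your proof is essentially the same as the paper's. You decompose $\mathcal E^{\mathrm{Int}}$ and $\mathcal E^\partial$ into the same three blocks up to a harmless regrouping (you keep $2e^V[K(f(z))-K(\xi)]$ and $2\e G(f(z))e^V$ as separate pieces, whereas the paper bundles $K(f(z))-K(\xi)+\e(G(f(z))-G(\xi))$ together and separates out the constant $2\e G(\xi)e^V$), and you invoke the same ingredients: the pointwise bound $|f(z)-\xi|=O(\delta/(\delta+|z+\xi|))$ and its $L^p$ consequences from Proposition \ref{f}, the $L^p$ control of $W$ and $e^{|W|}$ from \eqref{wp}, the lower bound on $\varphi(\xi)^2+K(\xi)|z|^2$ from \eqref{h2k}, Proposition \ref{intFracLap} for the constant boundary term, and the parameter orders \eqref{orders}. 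One detail you should make explicit: in the boundary piece $2\e I(f(z))e^{(V+W+\tau)/2}$, the factor $e^{W/2}$ is unbounded (it behaves like $|z+\xi|^{-O(\delta)}$ near $-\xi$), so the $L^p$ estimate of the product should go through H\"older, splitting into $\|f(\cdot)-\xi\|_{L^{2p}(\partial\D^2)}$ times $\|e^{(W+\tau)/2}\|_{L^{2p}(\partial\D^2)}$ as the paper does, yielding $O(\delta^{1/(2p)})$ rather than $O(\delta^{1/p})$ directly. This changes nothing in the conclusion since any positive power of $\delta$ is $o(1/\log\frac1{|\e|})$, but the direct $L^p$ estimate without H\"older needs an extra word of justification.
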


\begin{proof}
Let us start with the estimate in $\D^2$. We split
\begin{eqnarray}
\nonumber\mathcal E^\mathrm{Int}&=&\underbrace{2(K(f(z))+\e G(f(z)))\(e^{W+\tau}-1\)e^V}_{=:\mathcal E^\mathrm{Int}_1}\\
\label{split1}&+&\underbrace{2(K(f(z))-K(\xi)+\e(G(f(z))-G(\xi)))e^V}_{=:\mathcal E^\mathrm{Int}_2}+\underbrace{2\e G(\xi)e^V}_{=:\mathcal E^\mathrm{Int}_3}.
\end{eqnarray}
To estimate the first term, we use the basic estimate
$$e^{{W+\tau}}-1=O\(\(1+e^{{W+\tau}}\)(W+\tau)\)$$
and   \eqref{wp}, which gives
$$\left\|\mathcal E^\mathrm{Int}_1\right\|_{L^p\(\D^2\)}=O\(\|K+\e G\|_{L^\infty\(\D^2\)}\left\|1+e^{W+\tau}\right\|_{L^{2p}\(\D^2\)}\|W+\tau\|_{L^{2p}\(\D^2\)}\left\|e^V\right\|_{L^\infty\(\D^2\)}\)=O(\delta+|\tau|).$$
For the second term, the mean value Theorem and Proposition \ref{f} give
\begin{eqnarray*}
\left\|\mathcal E^\mathrm{Int}_2\right\|_{L^p\(\D^2\)}
&=&O\(\|\nabla(K+\e G)\|_{L^\infty\(\D^2\)}\left\|f(z)-\xi\right\|_{L^p\(\D^2\)}\left\|e^V\right\|_{L^\infty\(\D^2\)}\)=O(\delta).
\end{eqnarray*}
The third term can be estimated using \eqref{gi1}, which gives $G(\xi)=O(|\eta|)$, hence
$$\left\|\mathcal E^\mathrm{Int}_3\right\|_{L^p\(\D^2\)}=O(|\eta||\e|).$$
Therefore
\begin{equation}\label{eintlp}
\left\|\mathcal E^\mathrm{Int}\right\|_{L^p\(\D^2\)}=O(\delta+|\eta||\e|+|\tau|).
\end{equation}
To estimate the boundary term we split it as
\begin{equation}\label{split2}
\!\!\!\!\!\!\!\!\!\!\!\!\!\!\!\!\!\!\!\!\!\!\!\!\!\!\!\!\!\!\mathcal E^\partial=\underbrace{2h(f(z))\(e^\frac{W+\tau}2-1\)e^\frac V2}_{=:\mathcal E^\partial_1}+\underbrace{2\e I(f(z))e^\frac{V+W+\tau}2}_{=:\mathcal E^\partial_2}+\underbrace{\frac1\pi e^\frac{V}2\int\limits_{\partial\D^2}(h(f(w))-h(\xi))dw}_{=:\mathcal E^\partial_3}.
\end{equation}
The first term can be estimated, similarly as before, using   \eqref{wp}:
\begin{eqnarray}
\nonumber\left\|\mathcal E^\partial_1\right\|_{L^p\(\partial\D^2\)}&=&O\(\|h\|_{L^\infty\(\partial\D^2\)}\left\|1+e^{W+\tau}\right\|_{L^{2p}\(\partial\D^2\)}\|W+\tau\|_{L^{2p}\(\partial\D^2\)}\left\|e^V\right\|_{L^\infty\(\partial\D^2\)}\)\\
\label{ebdr1lp}&=&O(\delta+\tau).
\end{eqnarray}
For the second boundary term we recall that, in view of \eqref{gi1}, $I(f(z))=O(|f(z)-\xi|+|\eta|)$, and therefore using Proposition \ref{f} and \eqref{wp} we get:
\begin{eqnarray}
\nonumber\left\|\mathcal E^\partial_2\right\|_{L^p\(\partial\D^2\)}&=&O\(|\e|\||f(z)-\xi|+|\eta|\|_{L^{2p}\(\partial\D^2\)}\left\|e^\frac{V+W+\tau}2\right\|_{L^{2p}\(\partial\D^2\)}\)\\
\nonumber&=&O\(|\e|\(\delta^\frac1{2p}+|\eta|\)\)\\
\label{ebdr2lp}&=&O\(|\e|\(\delta^\frac14+|\eta|\)\).
\end{eqnarray}
Finally, since $e^V$ is constant on $\partial\D^2$, we can exploit Proposition \ref{intFracLap} to get
$$\left\|\mathcal E^\partial_3\right\|_{L^p\(\partial\D^2\)}=O(\delta),$$
which concludes the proof.
\end{proof}

\medskip

\section{The linear theory}

Define
$$\mathfrak C:=\left\{\xi\in\partial\D^2:\;2K(\xi)\int\limits_{\D^2}e^{V_\xi}+ h(\xi)\int_{\partial\D^2}e^\frac{V_\xi}2=2K(\xi)\int\limits_{\D^2}e^{V_\xi}+h(\xi)\frac{4\pi\varphi(\xi)}{\varphi(\xi)^2+K(\xi)}\ne0\right\}.$$
For any $\xi\in\mathfrak C$ we consider the Hilbert space
$$\mathbf H_\xi:=\left\{\phi\in H^1\(\D^2\):\;2K(\xi)\int\limits_{\D^2}e^{V_\xi}\phi+h(\xi)\frac{2\varphi(\xi)}{\varphi(\xi)^2+K(\xi)}\int\limits_{\partial\D^2}\phi=0\right\},$$
equipped with the scalar product and the corresponding norm (see the following lemma)
$$\langle u,v\rangle:=\int\limits_{\D^2}\nabla u\nabla v\quad\hbox{and}\quad\|u\|:=\|\nabla u\|_{L^2\(\D^2\)}=\(\int\limits_{\D^2}|\nabla u|^2\)^\frac12.$$
We point out that non-zero constant functions do not belong to the space $\mathbf H_\xi$.

Notice that, in view of Proposition \ref{estZ01}, $\xi$ belongs to $\mathfrak C$ if and only if $2K(\xi)+\varphi(\xi)h(\xi)\ne0$; therefore, since we are assuming \eqref{cond1}, we have that $\xi\in\mathfrak C$ for any $\xi$ close enough to $1$.

\begin{lemma}\label{emb}\
\begin{enumerate}
\item For any compact set $\mathfrak C'\Subset\mathfrak C$ there exists a constant $C>0$ such that
$$\|\phi\|_{L^2\(\D^2\)}\le C\|\nabla\phi\|_{L^2\(\D^2\)}\quad\quad\quad\forall\phi\in\mathbf H_\xi,\;\xi\in\mathfrak C'.$$
\item The norm $\|\cdot\|$ is equivalent to the standard norm $\|\cdot\|_{H^1\(\D^2\)}$ and the embeddings
$$\mathbf H_\xi\hookrightarrow L^p\(\D^2\)\quad\hbox{and}\quad\mathbf H_\xi\hookrightarrow L^p\(\partial\D^2\)$$ are compact and continuous for any $p>1$
\item There exists a constant $C$ (depending only on $p$ and the compact set $\mathfrak C'$) such that
$$\|\phi\|_{L^p\(\D^2\)}+\|\phi\|_{L^p\(\partial\D^2\)}\le C\|\phi\|\quad\quad\quad\forall\phi\in\mathbf H_\xi,\;\xi\in\mathfrak C'.$$
\end{enumerate}
\end{lemma}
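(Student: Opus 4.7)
The plan is to prove (1) by a standard contradiction-and-compactness argument, to deduce (2) immediately from (1) together with the two-dimensional Rellich-Kondrachov and trace embedding theorems, and finally to get (3) by bookkeeping the constants that appeared in (1) and (2).

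For (1), assume by contradiction that there exist $\xi_n\in\mathfrak C'$ and $\phi_n\in\mathbf H_{\xi_n}$ with $\|\phi_n\|_{L^2(\D^2)}=1$ and $\|\nabla\phi_n\|_{L^2(\D^2)}\to 0$. Since $\mathfrak C'$ is compact, up to a subsequence $\xi_n\to\xi_*\in\mathfrak C'$, and since $\{\phi_n\}$ is bounded in $H^1(\D^2)$, up to a further subsequence $\phi_n\rightharpoonup\phi_*$ weakly in $H^1(\D^2)$, strongly in $L^2(\D^2)$ and strongly in $L^2(\partial\D^2)$ (by the compact trace). Weak lower semicontinuity of the Dirichlet energy forces $\nabla\phi_*\equiv 0$, hence $\phi_*$ is a constant $c$ with $|c|=1/\sqrt{\pi}\ne 0$. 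On the other hand, since $V_{\xi_n}\to V_{\xi_*}$ uniformly on $\overline{\D^2}$ and $K,h,\varphi$ are continuous, the linear constraint defining $\mathbf H_{\xi_n}$ passes to the limit and yields
$$2K(\xi_*)\int\limits_{\D^2}e^{V_{\xi_*}}\phi_*+h(\xi_*)\frac{2\varphi(\xi_*)}{\varphi(\xi_*)^2+K(\xi_*)}\int\limits_{\partial\D^2}\phi_*=0.$$
Evaluating the left-hand side on $\phi_*\equiv c$ produces $c$ times the quantity that defines $\mathfrak C$, which is nonzero because $\xi_*\in\mathfrak C'\Subset\mathfrak C$. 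This forces $c=0$, contradicting $|c|=1/\sqrt{\pi}$.

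For (2), the inequality $\|\phi\|_{H^1(\D^2)}^2=\|\phi\|_{L^2(\D^2)}^2+\|\nabla\phi\|_{L^2(\D^2)}^2\le(C^2+1)\|\phi\|^2$ together with the trivial estimate $\|\phi\|\le\|\phi\|_{H^1(\D^2)}$ gives the equivalence of norms on $\mathbf H_\xi$ for $\xi\in\mathfrak C'$. The compactness and continuity of the embeddings $\mathbf H_\xi\hookrightarrow L^p(\D^2)$ and $\mathbf H_\xi\hookrightarrow L^p(\partial\D^2)$ for every $p\in(1,+\infty)$ then follow from the corresponding embeddings for $H^1(\D^2)$, namely Rellich-Kondrachov on the disk (in dimension two, $H^1$ embeds compactly into every $L^p$ with $p<\infty$) and the compact trace $H^1(\D^2)\hookrightarrow H^{1/2}(\partial\D^2)\hookrightarrow L^p(\partial\D^2)$.

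Finally, (3) is the quantitative counterpart of (2): combining the uniform Poincaré constant from (1) with the (universal) $H^1\hookrightarrow L^p$ embedding constants on $\D^2$ and $\partial\D^2$ one finds a constant $C=C(p,\mathfrak C')$ with $\|\phi\|_{L^p(\D^2)}+\|\phi\|_{L^p(\partial\D^2)}\le C\|\phi\|_{H^1(\D^2)}\le C'\|\phi\|$ for every $\phi\in\mathbf H_\xi$ and every $\xi\in\mathfrak C'$.

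The only nontrivial point is the uniformity in $\xi\in\mathfrak C'$ in step (1); this is precisely why one needs to extract $\xi_n\to\xi_*$ from the compactness of $\mathfrak C'$ and to know that the limiting constraint still excludes nonzero constants, which is ensured by the definition of $\mathfrak C$ and, through Proposition \ref{estZ01}, by the assumption \eqref{cond1}.
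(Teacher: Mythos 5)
Your proof is correct and follows essentially the same contradiction-and-compactness argument as the paper: normalize in $L^2$, extract a subsequence converging to a nonzero constant, and pass to the limit in the defining constraint to contradict $\xi_*\in\mathfrak C$. The only small addition is that you make explicit the use of the compact trace to pass to the limit in the boundary integral $\int_{\partial\D^2}\phi_n$, a point the paper leaves implicit; otherwise, (2) and (3) are deduced from (1) just as the paper does.
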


\begin{proof}
We only prove (1) because (2) and (3) follow by (1). We argue by contradiction. Assume there exist sequences $\xi_n\in\mathfrak C'$ for some $\mathfrak C'\Subset\mathfrak C$ and $\phi_n\in H^1\(\D^2\)$ such that
\begin{equation}\label{emb1}
2K(\xi_n)\int\limits_{\D^2}e^{V_{\xi_n}}\phi_n+h(\xi_n)\frac{2\varphi(\xi_n)}{\varphi(\xi_n)^2+K(\xi_n)}\int\limits_{\partial\D^2}\phi_n=0,
\end{equation}
and
\begin{equation}\label{emb2}
\|\phi_n\|_{L^2\(\D^2\)}=1\quad\hbox{and}\quad\|\nabla\phi_n\|_{L^2\(\D^2\)}\underset{n\to+\infty}\to0.
\end{equation}
Up to a subsequence, we can assume that
$$\xi_n\underset{n\to+\infty}\to\xi\in\mathfrak C,\quad\phi_n\underset{n\to+\infty}\to\phi\;\hbox{in}\;L^2\(\D^2\)\quad\hbox{and}\quad\nabla\phi_n\underset{n\to+\infty}\rightharpoonup\nabla\phi\;\hbox{in}\;L^2\(\D^2\).$$
Therefore by \eqref{emb2} we deduce that $\|\phi\|_{L^2\(\D^2\)}=1$ and $\|\nabla\phi\|_{L^2\(\D^2\)}=0$. Then $\phi\equiv\pm\frac1{\sqrt\pi}$ is constant on the disk. On the other hand, by \eqref{emb1}, taking into account that
\begin{equation}\label{key3}
2K(\xi_n)e^{V_{\xi_n}}\underset{n\to+\infty}\to2K(\xi)e^{V_\xi}\quad\hbox{uniformly in}\;\D^2
\end{equation}
and
\begin{equation}\label{key4} 
h(\xi_n)\frac{2\varphi(\xi_n)}{\varphi(\xi_n)^2+K(\xi_n)}\underset{n\to+\infty}\to h(\xi)\frac{2\varphi(\xi)}{\varphi(\xi)^2+K(\xi)}\quad\hbox{uniformly in}\;\partial\D^2,
\end{equation}
we get 
$$\pm\frac1{\sqrt\pi}\underbrace{\(2K(\xi)\int\limits_{\D^2}e^{V_\xi}+
 h(\xi)\frac{4\pi\varphi(\xi)}{\varphi(\xi)^2+K(\xi)}\)}_{\ne0\;\hbox{since}\;\xi\in\mathfrak C}=0,$$
and a contradiction arises.
\end{proof}

We also point out that a sort of Moser-Trudinger inequality holds true on $\mathbf H_\xi$:

\begin{lemma}\label{mt}
For any $\phi\in\mathbf H_\xi$ and $p>1$ one has
$$\left\|e^{|\phi|}\right\|_{L^p\(\D^2\)}+\left\|e^\frac{|\phi|}2\right\|_{L^p\(\partial\D^2\)}=O\(e^{O\(\|\phi\|^2\)}\).$$
\end{lemma}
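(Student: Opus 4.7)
My approach is to reduce the estimate to the classical Moser--Trudinger inequality in the interior and its trace version (Lebedev--Milin) on the boundary, by splitting off a suitable average of $\phi$ and controlling that average via Lemma \ref{emb}. The defining constraint of $\mathbf H_\xi$ enters only through Lemma \ref{emb}, which provides the Poincar\'e-type and trace bounds uniformly on compacts $\mathfrak C'\Subset\mathfrak C$.

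For the interior factor, I would decompose $\phi=c+\widetilde\phi$ with $c:=\frac{1}{\pi}\int_{\D^2}\phi$, so that $\widetilde\phi$ has zero mean on $\D^2$ and $\|\nabla\widetilde\phi\|_{L^2(\D^2)}=\|\phi\|$. Lemma \ref{emb}(1) yields $|c|\le\pi^{-1/2}\|\phi\|_{L^2(\D^2)}\le C\|\phi\|$. Trudinger's inequality for zero-mean functions provides constants $\alpha>0$ and $C_0>0$ such that
$$\int_{\D^2}\exp\left(\alpha\frac{\widetilde\phi^{\,2}}{\|\phi\|^2}\right)\le C_0.$$
Using the AM--GM splitting $p|\widetilde\phi|\le\frac{\alpha}{2}\frac{\widetilde\phi^{\,2}}{\|\phi\|^2}+\frac{p^2\|\phi\|^2}{2\alpha}$, exponentiating and integrating,
$$\int_{\D^2}e^{p|\widetilde\phi|}\le e^{p^2\|\phi\|^2/(2\alpha)}\int_{\D^2}\exp\left(\frac{\alpha}{2}\frac{\widetilde\phi^{\,2}}{\|\phi\|^2}\right)\le C_0'\,e^{p^2\|\phi\|^2/(2\alpha)}.$$
Combining with $e^{p|c|}\le e^{pC\|\phi\|}\le e^{p^2C^2/2+\|\phi\|^2/2}$ and taking $p$-th roots gives $\|e^{|\phi|}\|_{L^p(\D^2)}=O(e^{O(\|\phi\|^2)})$.

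For the boundary factor, the same scheme applies with $c':=\frac{1}{2\pi}\int_{\partial\D^2}\phi$, so that $\widetilde\phi':=\phi-c'$ has zero boundary mean; the bound $|c'|\le C\|\phi\|$ follows from Lemma \ref{emb}(3). I then replace Trudinger's interior inequality by its trace counterpart (Lebedev--Milin / Chang--Yang): there exist $\alpha'>0$ and $C_1>0$ such that
$$\int_{\partial\D^2}\exp\left(\alpha'\frac{(\widetilde\phi')^2}{\|\phi\|^2}\right)\le C_1$$
whenever $\int_{\partial\D^2}\widetilde\phi'=0$. Applying the same Young-type splitting to the exponent $|\phi|/2$ in place of $|\phi|$ produces $\|e^{|\phi|/2}\|_{L^p(\partial\D^2)}=O(e^{O(\|\phi\|^2)})$. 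No genuine obstacle appears; the only matter requiring attention is the uniformity in $\xi$ of the constants, but the two Moser--Trudinger inequalities live on the fixed domain $\D^2$ and are thus $\xi$-independent, while the bounds on $|c|$ and $|c'|$ are uniform on any $\mathfrak C'\Subset\mathfrak C$ by Lemma \ref{emb}.
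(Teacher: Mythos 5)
Your proof is correct, and it takes a genuinely (if mildly) different route from the paper's. The paper exploits the defining constraint of $\mathbf H_\xi$ directly: it rewrites the constraint to express $\phi=\phi_0-(\mathrm{const})\,c$, where $\phi_0:=\phi-\frac1{2\pi}\int_{\partial\D^2}\phi$ has zero \emph{boundary} mean and $c$ is a weighted interior integral of $\phi_0$; it then bounds $|c|=O(\|\phi\|)$ via Poincar\'e--Wirtinger and applies a Moser--Trudinger inequality from \cite{chyg} \emph{for zero-boundary-mean functions} to $\phi_0$ in the interior, together with the Li--Liu boundary inequality to the same $\phi_0$. You instead use two separate normalizations — zero \emph{interior} mean $\widetilde\phi=\phi-\frac1\pi\int_{\D^2}\phi$ for the interior term and zero boundary mean $\widetilde\phi'$ for the boundary term — and control both subtracted averages at once through Lemma~\ref{emb}, which already packages the constraint of $\mathbf H_\xi$. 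This lets you invoke the most standard interior Trudinger inequality (zero interior mean) rather than the less common zero-boundary-mean variant, at the cost of carrying two decompositions. Both approaches are uniform in $\xi\in\mathfrak C'$ for the same reason you give. One small point worth flagging explicitly, though it causes no trouble: when $\|\phi\|=0$ the quantities $\widetilde\phi^2/\|\phi\|^2$ are ill-defined, but then $\phi$ is constant and, since nonzero constants are excluded from $\mathbf H_\xi$, $\phi\equiv0$ and the estimate is trivial.
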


\begin{proof}
Since $\phi\in\mathbf H_\xi$, using the definition of $V_\xi$ given in \eqref{Vbox} we have
$$8\varphi(\xi)^2K(\xi)\int\limits_{\D^2}\frac{\phi(z)}{\(\varphi(\xi)^2+K(\xi)|z|^2\)^2}dz+\frac{2h(\xi)\varphi(\xi)}{\varphi(\xi)^2+K(\xi)}\int\limits_{\partial\D^2}\phi=0;$$
therefore we can write, using Proposition \ref{estZ01},
$$\!\!\!\!\!\!\!\!\!\!\phi=\underbrace{\phi-\frac1{2\pi}\int\limits_{\partial\D^2}\phi}_{=:\phi_0}-\frac{2\varphi(\xi)^2K(\xi)\(\varphi(\xi)^2+K(\xi)\)}{\pi\(2K(\xi)+\varphi(\xi)h(\xi)\)}\underbrace{\int\limits_{\D^2}\(\phi(z)-\frac1{2\pi}\int\limits_{\partial\D^2}\phi\)\frac{dz}{\(\varphi(\xi)^2+K(\xi)|z|^2\)^2}}_{=:c}.$$
From Poincar\'e-Wirtinger inequality we get $|c|=O(\|\phi\|)$; on the other hand, since $\int\limits_{\partial\D^2}\phi_0=0$, we can estimate $\left\|e^{|\phi|}\right\|_{L^p\(\D^2\)}$ using a Moser-Trudinger type inequality from \cite{chyg}, getting
$$\int\limits_{\D^2}e^{p|\phi_0|}=O\( e^{\frac{p^2\|\phi_0\|^2}{8\pi}}\int\limits_{\D^2}e^{\frac{2\pi}{\|\phi_0\|^2}\phi_0^2}\)=e^{O\(\|\phi_0\|^2\)}\int\limits_{\D^2}e^{\frac{2\pi}{\|\phi_0\|^2}\phi_0^2}=O\(e^{O\(\|\phi_0\|^2\)}\),$$
hence
$$\left\|e^{|\phi|}\right\|_{L^p\(\D^2\)}=O\(e^{O(|c|)}\left\|e^{|\phi_0|}\right\|_{L^p\(\D^2\)}\)=O\(e^{O(\|\phi\|)}e^{O\(\|\phi_0\|^2\)}\)=O\(e^{O\(\|\phi\|^2\)}\).$$
Similarly, using the Moser-Trudinger boundary inequality from \cite{liliu} we get
$$\int\limits_{\partial\D^2}e^{p\frac{|\phi_0|}2}=O\( e^{\frac{p^2\|\phi_0\|^2}{8\pi}}\int\limits_{\partial\D^2}e^{\frac\pi{\|\phi_0\|^2}\phi_0^2}\)=e^{O\(\|\phi_0\|^2\)}\int\limits_{\partial\D^2}e^{\frac\pi{\|\phi_0\|^2}\phi_0^2}=O\(e^{O\(\|\phi_0\|^2\)}\),$$
hence $\left\|e^\frac{|\phi|}2\right\|_{L^p\(\partial\D^2\)}=O\(e^{O\(\|\phi\|^2\)}\)$.
\end{proof}

Define the functions
\begin{equation}\label{generators}
\mathcal Z_1(z):=\frac{\langle z,\xi\rangle}{\varphi(\xi)^2+K(\xi)|z|^2},\quad\quad\quad\mathcal Z_2(z):=\frac{\left\langle z,\xi^\perp\right\rangle}{\varphi(\xi)^2+K(\xi)|z|^2},
\end{equation}
that satisfy
\begin{equation}\label{eqlin}
\begin{cases}-\Delta\mathcal Z_i=2K(\xi)e^V\mathcal Z_i&\hbox{in}\;\D^2\\
\partial_\nu\mathcal Z_i=h(\xi)e^\frac V2\mathcal Z_i&\hbox{on}\;\partial\D^2,\end{cases}\quad\quad\quad i=1,2.
\end{equation}
Thus, we can state the following linear invertibility result.
\begin{theorem}\label{key}
Fix $p>1$ and $\mathfrak C'\Subset\mathfrak C$. For any $\xi\in\mathfrak C'$ and $c\in L^p\(\D^2\)$ and $d\in L^p\(\partial\D^2\)$ such that
\begin{equation}\label{cd}
\int\limits_{\D^2}c+\int\limits_{\partial\D^2}d=\int\limits_{\D^2}c\mathcal Z_i+\int\limits_{\partial\D^2}d\mathcal Z_i=0,\quad i=1,2,
\end{equation}
there exists a unique solution $\phi\in H^1\(\D^2\)$ to the problem
$$\begin{cases}-\Delta\phi=2K(\xi)e^{V_\xi}\phi+c&\hbox{in}\;\D^2\\
\partial_\nu\phi=h(\xi)e^\frac{V_\xi}2\phi+d&\hbox{on}\;\partial\D^2\\
2K(\xi)\int\limits_{\D^2}e^{V_\xi}\phi+\frac{h(\xi)2\varphi(\xi)}{\varphi(\xi)^2+K(\xi)}\int\limits_{\partial\D^2}\phi=2K(\xi)\int\limits_{\D^2}e^{V_\xi}\phi\mathcal Z_i+\frac{h(\xi)2\varphi(\xi)}{\varphi(\xi)^2+K(\xi)}\int\limits_{\partial\D^2}\phi\mathcal Z_i=0&i=1,2.\\
\end{cases}$$
Furthermore
\begin{equation}\label{key1}
\|\phi\|\le C_p\(\|c\|_{L^p\(\D^2\)}+\|d\|_{L^p\(\partial\D^2\)}\),
\end{equation}
where the constant $C_p$ only depends on $p$ and the compact set $\mathfrak C'$.
\end{theorem}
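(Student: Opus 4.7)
The plan is to recast the boundary value problem as a compact perturbation of the identity on the Hilbert space $\mathbf H_\xi$, apply the Fredholm alternative to obtain existence and uniqueness, and establish the quantitative bound \eqref{key1} via a contradiction/compactness argument that exploits the uniformity in $\xi\in\mathfrak C'$.

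First I would set up the weak formulation. Testing the interior equation against $\psi\in\mathbf H_\xi$ and integrating by parts using the boundary condition gives
\begin{equation*}
\int_{\D^2}\nabla\phi\cdot\nabla\psi \;=\; 2K(\xi)\int_{\D^2}e^{V_\xi}\phi\,\psi+h(\xi)\int_{\partial\D^2}e^{V_\xi/2}\phi\,\psi+\int_{\D^2}c\,\psi+\int_{\partial\D^2}d\,\psi.
\end{equation*}
Testing with $\psi\equiv 1$ (allowed as a test function for the classical formulation) and using the defining constraint of $\mathbf H_\xi$ shows that the compatibility $\int_{\D^2}c+\int_{\partial\D^2}d=0$ from \eqref{cd} is necessary and reduces the classical and the $\mathbf H_\xi$-weak formulations to the same problem. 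By Riesz representation on $(\mathbf H_\xi,\langle\cdot,\cdot\rangle)$, which is well-defined thanks to Lemma \ref{emb}, the equation reads $\phi-T_\xi\phi=g_\xi$, where $T_\xi:\mathbf H_\xi\to\mathbf H_\xi$ is the self-adjoint operator determined by the symmetric bilinear form $b_\xi(\phi,\psi)=2K(\xi)\int_{\D^2}e^{V_\xi}\phi\psi+h(\xi)\int_{\partial\D^2}e^{V_\xi/2}\phi\psi$, and $g_\xi$ encodes the remaining right-hand side. The compact embeddings $\mathbf H_\xi\hookrightarrow L^q(\D^2),L^q(\partial\D^2)$ from Lemma \ref{emb}, together with $e^{V_\xi}\in L^\infty(\D^2)$, make $T_\xi$ compact.

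Next I would identify the kernel of $\mathrm{Id}-T_\xi$ in $\mathbf H_\xi$. A classification of $H^1$-solutions of the linearised Liouville equation on $\D^2$ with the present mixed boundary condition (obtainable by transporting to the half-plane via a conformal map and invoking the standard half-plane non-degeneracy result) produces the three-dimensional space $\mathrm{span}\{1,\mathcal Z_1,\mathcal Z_2\}$; the defining constraint of $\mathbf H_\xi$ excludes the nonzero constants, where assumption \eqref{cond1} is essential via the characterisation of $\mathfrak C$. Hence $\ker(\mathrm{Id}-T_\xi)=\mathrm{span}\{\mathcal Z_1,\mathcal Z_2\}$, and a direct check using the odd symmetry of $\mathcal Z_i$ confirms $\mathcal Z_i\in\mathbf H_\xi$. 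Since $\langle g_\xi,\mathcal Z_i\rangle=\int_{\D^2}c\,\mathcal Z_i+\int_{\partial\D^2}d\,\mathcal Z_i$, the orthogonality conditions in \eqref{cd} are precisely $g_\xi\perp\ker(\mathrm{Id}-T_\xi)$, so the Fredholm alternative yields a solution unique modulo $\mathrm{span}\{\mathcal Z_1,\mathcal Z_2\}$; the two further constraints in the statement then single out a unique $\phi$.

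Finally I would prove \eqref{key1} by contradiction, uniformly in $\xi\in\mathfrak C'$. Suppose there exist $\xi_n\in\mathfrak C'$, data $(c_n,d_n)$ satisfying \eqref{cd}, and solutions $\phi_n$ with $\|\phi_n\|=1$ but $\|c_n\|_{L^p(\D^2)}+\|d_n\|_{L^p(\partial\D^2)}\to 0$. Up to a subsequence $\xi_n\to\xi_\infty\in\mathfrak C'$ and $\phi_n\rightharpoonup\phi_\infty$ weakly in $H^1(\D^2)$ and strongly in every $L^q(\D^2)$ and $L^q(\partial\D^2)$ by Lemma \ref{emb}. Passing to the limit in the weak formulation and using \eqref{key3}--\eqref{key4} to absorb the variable parameter $\xi_n$, the limit $\phi_\infty$ solves the homogeneous problem with parameter $\xi_\infty$ and inherits the $\mathbf H_{\xi_\infty}$-membership together with both orthogonality constraints; by the kernel analysis above, $\phi_\infty\equiv 0$. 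Testing the weak formulation with $\psi=\phi_n$ then gives
\begin{equation*}
\|\phi_n\|^2 \;=\; 2K(\xi_n)\!\int_{\D^2}\! e^{V_{\xi_n}}\phi_n^2+h(\xi_n)\!\int_{\partial\D^2}\! e^{V_{\xi_n}/2}\phi_n^2+\int_{\D^2} c_n\phi_n+\int_{\partial\D^2} d_n\phi_n\;\longrightarrow\;0,
\end{equation*}
which contradicts $\|\phi_n\|=1$; uniformity of $C_p$ on $\mathfrak C'$ follows automatically. The delicate point I expect is the kernel classification: ruling out $H^1$-solutions of the homogeneous problem beyond $\{1,\mathcal Z_1,\mathcal Z_2\}$ likely requires the conformal transplant of the linearised equation to a half-plane Liouville-type problem and the standard non-degeneracy result there.
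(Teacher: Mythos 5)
Your overall architecture mirrors the paper's: formulate the problem as $(\mathrm{Id}-T_\xi)\phi=g_\xi$ on $\mathbf H_\xi$ with $T_\xi$ compact (the paper packages the same thing through the auxiliary space $\mathbf L$ and the Green operator $\mathcal J_\xi$, which is cosmetically different but mathematically equivalent), apply the Fredholm alternative using the orthogonality conditions \eqref{cd}, and then derive the uniform bound \eqref{key1} by the same compactness/contradiction argument with the key final step of testing against $\phi_n$ itself.

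However, your kernel classification contains a genuine error. The constant function $\phi\equiv 1$ is \emph{not} an $H^1$-solution of the homogeneous linearised problem: plugging $\phi\equiv 1$ into the interior equation gives $-\Delta 1=0$ while $2K(\xi)e^{V_\xi}\cdot 1\not\equiv 0$, and into the boundary condition gives $\partial_\nu 1=0$ while $h(\xi)e^{V_\xi/2}\cdot 1\neq 0$; neither holds, so the claimed three-dimensional kernel $\mathrm{span}\{1,\mathcal Z_1,\mathcal Z_2\}$ is wrong. Consequently the role you assign to \eqref{cond1} — excluding a constant kernel element through the constraint defining $\mathbf H_\xi$ — is misplaced: \eqref{cond1} is used to guarantee $\xi\in\mathfrak C$, hence that $\mathbf H_\xi$ is a proper closed subspace with the Poincar\'e-type estimate of Lemma \ref{emb}, not to knock out a nonexistent constant mode in the linearised kernel. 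The paper instead cites Lemma 2.3 of \cite{jlsmr} for the statement that the kernel of $\mathcal I-\mathcal K_\xi$ in $\mathbf H_\xi$ is exactly $\mathrm{span}\{\mathcal Z_1,\mathcal Z_2\}$. If the full $H^1$-kernel on $\D^2$ does turn out to be three-dimensional (as the M\"obius symmetry of the constant-curvature problem suggests), the third generator is a nontrivial dilation-type mode, not $1$, and one would still need to verify that it fails the $\mathbf H_\xi$-constraint — a computation you neither perform nor flag. Aside from this step, the rest of the argument (compactness, Fredholm, orthogonality bookkeeping, and the contradiction for the uniform estimate) is correct and matches the paper.
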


\begin{proof} 
Given $p>1$ we define the Banach space
$$\mathbf L:=\left\{(f,g)\in L^p\(\D^2\)\times L^p\(\partial\D^2\):\;\int\limits_{\D^2}f+\int\limits_{\partial\D^2}g=0\right\},$$
equipped with the norm
$$\|(f,g)\|:=\|f\|_{L^p\(\D^2\)}+\|g\|_{L^p\(\partial\D^2\)}. $$
The operator $\mathcal J_\xi:\mathbf L\to\mathbf H_\xi$ is defined by $\mathcal J_\xi(f,g)=u$ which is the unique solution in $\mathbf H_\xi$ of
$$\begin{cases}-\Delta u=f&\hbox{in}\;\D^2\\
\partial_\nu u=g&\hbox{on}\;\partial\D^2,\end{cases}$$
i.e. 
\begin{equation}\label{tstar}
\int\limits_{\D^2}\nabla u\nabla\zeta=\int\limits_{\D^2}\zeta f+\int\limits_{\partial\D^2}\zeta g\quad\quad\quad\forall\zeta\in H^1\(\D^2\).
\end{equation}
By (3) of Lemma \ref{emb} we deduce
\begin{equation}\label{nt}
\|u\|=\|\mathcal J_\xi(f,g)\|\le C_p\(\|f\|_{L^p\(\D^2\)}+\|g\|_{L^p\(\partial\D^2\)}\),
\end{equation}
where the constant $C_p$ only depends on $p,q$ and the compact set $\mathfrak C'$.

Consider now the linear problem
$$\begin{cases}-\Delta\phi=2K(\xi)e^{V_\xi}\phi+c&\hbox{in}\;\D^2\\
\partial_\nu\phi=h(\xi)e^\frac{V_\xi}2\phi+d&\hbox{on}\;\partial\D^2,\end{cases}$$
which can be rewritten as
\begin{equation}\label{linstar}
\phi=\underbrace{\mathcal J_\xi\(2K(\xi)e^{V_\xi}\phi,h(\xi)e^\frac{V_\xi}2\phi\)}_{=:\mathcal K_\xi(\phi)}+\mathcal J_\xi(c,d),\quad\hbox{i.e.}\quad\phi-\mathcal K_\xi(\phi)=\mathcal J_\xi(c,d), \end{equation}
provided $\phi\in\mathbf H_\xi$. It is important to point out that the kernel of the operator $\mathcal I-\mathcal K_\xi$ in $\mathbf H_\xi$ is a two dimensional space generated by the two functions ${\mathcal Z_\xi}_1(z)={\mathcal Z}_1(z)$, ${\mathcal Z_\xi}_2(z)={\mathcal Z}_2(z)$ defined in \eqref{generators} (we remark the dependence on $\xi$ in the subscript),
see for instance Lemma 2.3 in \cite{jlsmr}.

Set
$$\mathbf K_\xi:=\mathrm{span}\left\{{\mathcal Z_\xi}_1,{\mathcal Z_\xi}_2\right\},$$
so that
$$\mathbf K_\xi^\perp=\left\{\phi\in\mathbf H_\xi:\;\left\langle\phi,\mathcal {\mathcal Z_\xi}_i\right\rangle:=2K(\xi)\int\limits_{\D^2}e^{V_\xi}\phi{\mathcal Z_\xi}_i+\frac{h(\xi)2\varphi(\xi)}{\varphi(\xi)^2+K(\xi)}\int\limits_{\partial\D^2}\phi{\mathcal Z_\xi}_i=0,\;i=1,2.\right\}.$$
Finally, since by (2) of Lemma \ref{emb} $\mathcal K_\xi:\mathbf H_\xi\to\mathbf H_\xi$ is a compact operator, by Fredholm alternative we deduce that problem \eqref{linstar} has a unique solution $\phi\in\mathbf K_\xi^\perp$ if and only if $\mathcal J_\xi(c,d)\in\mathbf K_\xi^\perp$, i.e.
$$0=\left\langle\mathcal J_\xi(c,d),{\mathcal Z_\xi}_i\right\rangle=\int\limits_{\D^2}c{\mathcal Z_\xi}_i+\int\limits_{\partial\D^2}d{\mathcal Z_\xi}_i,\quad i=1,2.$$ 
It remains to prove that estimate \eqref{key1} is uniform with respect to the point $\xi$ in $\mathfrak C'$. We will show that there exists a positive constant $C$ such that
\begin{equation}\label{key2}
\left\|\mathcal I-\mathcal K_\xi\right\|_{\mathcal L\(\mathbf K_\xi^\perp\)}\ge C\quad\forall\xi\in\mathfrak C'.
\end{equation}
Then \eqref{key1} follows since $(\mathcal I-\mathcal K_\xi)\phi=\mathcal J_\xi(c,d)$ and by \eqref{key2} and \eqref{nt} we get
$$\|\phi\|\le\(\|\mathcal I-\mathcal K_\xi\|_{\mathcal L\(\mathbf K_\xi^\perp\)}\)^{-1}\|(\mathcal I-\mathcal K_\xi)\phi\|\le C\(\|c\|_{L^p\(\D^2\)}+\|d\|_{L^p\(\partial\D^2\)}\).$$
Let us prove \eqref{key2}. By contradiction assume there exist a sequence of points $\xi_n\in\mathfrak C'$ and sequences of functions $\phi_n,\psi_n\in\mathbf K_{\xi_n}^\perp$ such that 
$$\|\phi_n\|=1,\;\|\psi_n\|\to0\quad\hbox{and}\quad(\mathcal I-\mathcal K_{\xi_n})\phi_n=\psi_n.$$
Up to a subsequence, $\xi_n\to\xi\in\mathfrak C$, $\phi_n\to\phi$ strongly in $L^2\(\D^2\)$ and $\nabla\phi_n\rightharpoonup\nabla\phi$ weakly in $L^2\(\D^2\)$. Since \eqref{key3} and \eqref{key4} hold true, then $\phi\in\mathbf K^\perp_\xi$. Moreover by $(\mathcal I-\mathcal K_{\xi_n})\phi_n=\psi_n$ and \eqref{tstar}, for any $\zeta\in H^1\(\D^2\)$,
$$\int\limits_{\D^2}\nabla\phi_n\nabla\zeta-2K(\xi_n)\int\limits_{\D^2}e^{V_{\xi_n}}\phi_n\zeta-\frac{h(\xi_n)2\varphi(\xi_n)}{\varphi(\xi_n)^2+K(\xi_n)}\int\limits_{\partial\D^2}\phi_n\zeta=\int\limits_{\D^2}\psi_n\zeta$$
and passing to the limit
$$\int\limits_{\D^2}\nabla\phi\nabla\zeta-2K(\xi)\int\limits_{\D^2}e^{V_\xi}\phi\zeta-\frac{h(\xi)2\varphi(\xi)}{\varphi(\xi)^2+K(\xi)}\int\limits_{\partial\D^2}\phi\zeta=0,$$
i.e. $(\mathcal I-\mathcal K_\xi)\phi=0$; therefore $\phi\equiv0$, because $\phi\in\mathbf K^\perp_\xi$. On the other hand, if we take $\zeta=\phi$, we can easily deduce that
$$1-2K(\xi)\int\limits_{\D^2}e^{V_\xi(z)}\phi^2dz-\frac{h(\xi)2\varphi(\xi)}{\varphi(\xi)^2+K(\xi)}\int\limits_{\partial\D^2}\phi^2=0\quad\Rightarrow\quad\phi\not\equiv0,$$
and a contradiction arises. This concludes the proof.
\end{proof}

\medskip

\section{The nonlinear projected problem}

In order to find a solution of \eqref{eqphi}, we will first solve the associated projected problem
\begin{equation}\label{projProb}
\begin{cases}\mathcal L^\mathrm{Int}_0\phi=\mathcal E^\mathrm{Int}+\mathcal L^\mathrm{Int}\phi+\mathcal N^\mathrm{Int}(\phi)+c_0+2K(\xi)e^V(c_1\mathcal Z_1+c_2\mathcal Z_2)&\hbox{in}\;\D^2,\\
\mathcal L^\partial_0\phi=\mathcal E^\partial+\mathcal L^\partial\phi+\mathcal N^\partial(\phi)+c_0+h(\xi)e^\frac V2(c_1\mathcal Z_1+c_2\mathcal Z_2)&\hbox{on}\;\partial\D^2,
\end{cases}
\end{equation}
with $\mathcal L^\mathrm{Int}_0$, $\mathcal E^\mathrm{Int}$, $\mathcal L^\mathrm{Int}$, $\mathcal N^\mathrm{Int}$, $\mathcal L^\partial_0$, $\mathcal E^\partial$, $\mathcal L^\partial$, $\mathcal N^\partial$, defined in \eqref{eqphi}-\eqref{ebdr}, $\mathcal Z_1$, $\mathcal Z_2$ given by \eqref{generators} and $c_0$, $c_1$, $c_2\in \R$. Denoting
$$\mathcal L_0:=(\mathcal L^\mathrm{Int}_0,\mathcal L^\partial_0),\quad \mathcal{E}:=(\mathcal E^\mathrm{Int},\mathcal E^\partial),\quad \mathcal L:=(\mathcal L^\mathrm{Int},\mathcal L^\partial),\quad \mathcal{N}:=(\mathcal N^\mathrm{Int},\mathcal N^\partial),$$
we can prove the following estimates.

\begin{lemma}\label{l} Let $\phi\in\mathbf H_\xi$. Then, for any $1<p<\frac43$,
$$\|\mathcal L\phi\|_p:=\left\|\mathcal L^\mathrm{Int}\phi\right\|_{L^p\(\D^2\)}+\left\|\mathcal L^\partial\phi\right\|_{L^p\(\partial\D^2\)}=O\(\frac{|\e|}{\log\frac1{|\e|}}\|\phi\|\).$$
\end{lemma}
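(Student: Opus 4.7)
The plan is to decompose $\mathcal L^{\mathrm{Int}}\phi$ and $\mathcal L^\partial\phi$ in exactly the same way as the error terms $\mathcal E^{\mathrm{Int}}$ and $\mathcal E^\partial$ in the proof of Proposition \ref{e}, which is natural because the prefactors in $\mathcal L$ differ from those in $\mathcal E$ only by replacing $h(f)$ with $h(\xi)$ and are now multiplied by $\phi$. Specifically I would write
\begin{align*}
K_\e(f(z))e^{W+\tau}-K(\xi)&=K(f(z))\bigl(e^{W+\tau}-1\bigr)+\bigl(K(f(z))-K(\xi)\bigr)+\e G(f(z))e^{W+\tau},\\
h_\e(f(z))e^{(W+\tau)/2}-h(\xi)&=h(f(z))\bigl(e^{(W+\tau)/2}-1\bigr)+\bigl(h(f(z))-h(\xi)\bigr)+\e I(f(z))e^{(W+\tau)/2},
\end{align*}
multiply the first by $2e^V\phi$ and the second by $e^{V/2}\phi$, and estimate each of the six resulting summands separately in $L^p(\D^2)$ or $L^p(\partial\D^2)$.

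Each bound would proceed by H\"older's inequality with several factors, placing $\phi$ in some $L^r$ with $r>1$ (controlled by $\|\phi\|$ thanks to Lemma \ref{emb}(3), and by the Moser--Trudinger inequality of Lemma \ref{mt} when $r$ has to be taken very large), while the remaining factors are distributed between $L^\infty$ and $L^q$ spaces where the following ingredients are available: the boundedness of $e^V$ and the fact that $e^{V/2}$ is constant on $\partial\D^2$; the mean-value estimate $|e^{(W+\tau)/2}-1|\lesssim\bigl(|W|+|\tau|\bigr)\bigl(1+e^{|W+\tau|/2}\bigr)$ combined with \eqref{wp}; the Lipschitz bounds $|K(f)-K(\xi)|,\,|h(f)-h(\xi)|\lesssim|f-\xi|$ together with the $L^q$-estimates for $f-\xi$ from Proposition \ref{f}, which give $\|f-\xi\|_{L^q(\D^2)}=O(\delta)$ for $q<2$ and $\|f-\xi\|_{L^q(\partial\D^2)}=O(\delta^{1/q})$ for $q>1$; and finally $G(\xi),I(\xi)=O(|\eta|)$ from \eqref{gi1}. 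Assembling these ingredients together with the orders \eqref{orders}, the interior summands collapse to $O\bigl((\delta+|\tau|+|\eta||\e|)\|\phi\|\bigr)=O\bigl(|\e|/\log(1/|\e|)\|\phi\|\bigr)$, and an analogous tally on the boundary produces the same order after absorbing the mixed quantities $|\e|\delta^{1/(2p)}$ and $|\e||\eta|$.

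The hardest step is the boundary middle summand $(h(f)-h(\xi))e^{V/2}\phi$, because in the parallel computation in Proposition \ref{e} the quantity $h(f)-h(\xi)$ only shows up inside the averaged piece $\mathcal E^\partial_3$, whereas here it must be estimated pointwise against $\phi$. The weak boundary integrability $\|f-\xi\|_{L^q(\partial\D^2)}=O(\delta^{1/q})$ is precisely what forces the threshold $p<4/3$: using the H\"older pair $1/p=1/(2p)+1/(2p)$ yields a contribution of size $\delta^{1/(2p)}\|\phi\|$ which must then be absorbed into $|\e|/\log(1/|\e|)\|\phi\|$ with the help of the $|\e|$-weights carried by the remaining summands and the orders \eqref{orders}. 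This delicate exponent balancing is the main technical obstacle of the lemma, and the verification of the claimed bound reduces to checking term by term that the absorption works, with this particular summand being the decisive one for the admissible range of $p$.
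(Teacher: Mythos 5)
Your decomposition of $\mathcal L^\partial$ and your H\"older strategy closely mirror the paper's (the paper groups the $e^{(W+\tau)/2}$ factor slightly differently, but the resulting three pieces are the same up to harmless rearrangement), and you correctly single out the boundary summand $(h(f(z))-h(\xi))e^{V/2}\phi$ as the decisive one: unlike in Proposition \ref{e}, where $h(f)-h(\xi)$ only shows up under an averaging integral, here it must be paired pointwise with $\phi$. The gap is in the absorption step you then sketch. This summand carries no $\e$-factor, no $\tau$-factor, and no $(e^{(W+\tau)/2}-1)$-factor; its only smallness is $\|h(f(\cdot))-h(\xi)\|_{L^q(\partial\D^2)}\lesssim\|f(\cdot)-\xi\|_{L^q(\partial\D^2)}=O(\delta^{1/q})$ for $q>1$. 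After any H\"older split that leaves $\phi$ in a trace $L^r(\partial\D^2)$ with finite $r$ (necessary if $\|\phi\|$ is to enter linearly), the best bound from this piece is $O(\delta^{1/q}\|\phi\|)$ for some $q>1$, and $\delta^{1/q}\gg\delta$ for every $q>1$, so this is not $O(\delta\|\phi\|)=O\bigl(|\e|/\log\frac1{|\e|}\cdot\|\phi\|\bigr)$. Your appeal to the ``$|\e|$-weights carried by the remaining summands'' is a category error: those weights live on different additive pieces and cannot lend smallness to this one. Pushing $q\to p^+$, $r\to\infty$ does not save it either, since the trace constant in $\|\phi\|_{L^r(\partial\D^2)}\le C_r\|\phi\|$ grows with $r$ and the optimized bound is still $O\bigl(\delta^{1/p}\sqrt{\log(1/\delta)}\,\|\phi\|\bigr)$, which is larger than $\delta\|\phi\|$ by the divergent factor $\delta^{1/p-1}\sqrt{\log(1/\delta)}$. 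So the term-by-term verification you defer actually fails at exactly the summand you call decisive.

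For completeness, you should be aware that the paper's own proof is opaque at this very point: its first displayed estimate reads $\bigl\|(h(f(\cdot))-h(\xi))e^{(W+\tau+V)/2}\bigr\|_{L^p(\partial\D^2)}=O(\delta^{1/4}+|\eta|)$ with no $\e$-weight, yet the final line records its contribution as $O\bigl((\delta^{1/4}|\e|+|\eta||\e|)\|\phi\|\bigr)$, with an $\e$ that appears unexplained; taken literally, the listed intermediate estimates would only give $O\bigl((\delta^{1/4}+|\eta|)\|\phi\|\bigr)$, which does not imply the statement of the lemma. So your proposal reproduces the paper's route, including the unresolved step, and simply asserting that the absorption ``works'' is not a proof.
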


\begin{proof}
By definition we can write
$$\mathcal{L}^\partial=(h(f(z))-h(\xi))e^{\frac{W+\tau+V}{2}}+h(\xi)\left(e^{\frac{W+\tau}{2}}-1\right)e^{\frac{V}{2}}+\varepsilon I(f(z))e^{\frac{W+\tau+V}{2}},$$
and, proceeding similarly to the estimates of $\mathcal E^\partial_1$ and $\mathcal E^\partial_2$ in \eqref{split2}, it follows
\begin{eqnarray*}\left\|(h(f(\cdot))-h(\xi))e^{\frac{W(\cdot)+\tau+V}{2}}\right\|_{L^p(\partial\D^2)}&=&O\left(\delta^{\frac 14}+|\eta|\right),\\
\left\|h(\xi)\left(e^{\frac{W(\cdot)+\tau}{2}}-1\right)e^{\frac{V}{2}}\right\|_{L^p(\partial\D^2)}&=&O(\delta+\tau),\\
\left\|\varepsilon I(f(\cdot))e^{\frac{W(\cdot)+\tau+V}{2}}\right\|_{L^p(\partial\D^2)}&=&O\left(|\varepsilon|\left(\delta^{\frac 14}+|\eta|\right)\right).
\end{eqnarray*}
Hence, since $\mathcal L^\mathrm{Int}\phi=\mathcal E^\mathrm{Int}\phi$, by these estimates and  \eqref{eintlp} we get
\begin{eqnarray*}
\|\mathcal L\phi\|_p&=&O\(\left\|\mathcal E^\mathrm{Int}\phi\right\|_{L^p\(\D^2\)}+\left\|\mathcal L^\partial\phi\right\|_{L^p\(\partial\D^2\)}\)\\
&=&O\(\left\|\mathcal E^\mathrm{Int}\right\|_{L^{\frac32p}\(\D^2\)}\|\phi\|_{L^{3p}\(\D^2\)}+\left\|\mathcal L^\partial\right\|_{L^{\frac32p}\(\partial\D^2\)}\|\phi\|_{L^{3p}\(\partial\D^2\)}\)\\
&=&O\(\(\delta+\delta^\frac14|\e|+|\eta||\e|+|\tau|\)\|\phi\|\).
\end{eqnarray*}
\end{proof}

\begin{lemma}\label{n} Let $\phi,\phi'\in\mathbf H_\xi$. For any $p>1$,
\begin{eqnarray*}
\left\|\mathcal N(\phi)-\mathcal N(\phi')\right\|_p&:=&\left\|\mathcal N^\mathrm{Int}(\phi)-\mathcal N^\mathrm{Int}(\phi')\right\|_{L^p\(\D^2\)}+\left\|\mathcal N^\partial(\phi)-\mathcal N^\partial(\phi')\right\|_{L^p\(\partial\D^2\)}\\
&=&O\(\|\phi-\phi'\|(\|\phi\|+\|\phi'\|)e^{O\(\|\phi\|^2+\|\phi'\|^2\)}\).
\end{eqnarray*}
In particular, taking $\phi'=0$,
$$\|\mathcal N(\phi)\|_p=O\(\|\phi\|^2e^{O\(\|\phi\|^2\)}\).$$
\end{lemma}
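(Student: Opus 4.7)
The plan is to combine a Taylor-type pointwise bound on the nonlinearity with H\"older's inequality, and then invoke Lemmas \ref{emb}, \ref{mt} and \ref{w}. Setting $g(s):=e^s-1-s$, the fundamental theorem of calculus applied to $t\mapsto g(\phi'+t(\phi-\phi'))$ gives
$$g(\phi)-g(\phi')=(\phi-\phi')\int_0^1\(e^{\phi'+t(\phi-\phi')}-1\)dt,$$
and the elementary inequalities $|e^r-1|\le|r|e^{|r|}$ together with $|\phi'+t(\phi-\phi')|\le|\phi|+|\phi'|$ yield the uniform pointwise bound
$$|g(\phi)-g(\phi')|=O\(|\phi-\phi'|(|\phi|+|\phi'|)(e^{|\phi|}+e^{|\phi'|})\).$$
The same argument applied to $g_\partial(s):=e^{s/2}-1-s/2$ gives the analogous bound on $\partial\D^2$, with $e^{|\phi|/2}+e^{|\phi'|/2}$ in place of $e^{|\phi|}+e^{|\phi'|}$. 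Since $K_\e$ and $h_\e$ are uniformly bounded, and $e^V,e^{V/2}$ are bounded on $\D^2,\partial\D^2$ by the explicit formula \eqref{Vbox} and assumption \eqref{h2k}, these pointwise estimates give
$$|\mathcal N^\sharp(\phi)-\mathcal N^\sharp(\phi')|=O\(e^{O(|W|+|\tau|)}|\phi-\phi'|(|\phi|+|\phi'|)(e^{|\phi|}+e^{|\phi'|})\),\qquad\sharp\in\{\mathrm{Int},\partial\}.$$

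Next I would apply H\"older's inequality with four equal exponents equal to $4p$, so that $\frac{4}{4p}=\frac1p$, splitting the right-hand side into the four factors $|\phi-\phi'|$, $|\phi|+|\phi'|$, $e^{|\phi|}+e^{|\phi'|}$ and $e^{O(|W|+|\tau|)}$. Each $L^{4p}$-norm is then controlled by a known quantity: Lemma \ref{emb}(3) gives $\|\psi\|_{L^{4p}(\D^2)}+\|\psi\|_{L^{4p}(\partial\D^2)}=O(\|\psi\|)$ for any $\psi\in\mathbf H_\xi$, so in particular for $\phi$, $\phi'$ and $\phi-\phi'$; Lemma \ref{mt} gives $\|e^{|\phi|}\|_{L^{4p}(\D^2)}+\|e^{|\phi|/2}\|_{L^{4p}(\partial\D^2)}=O\(e^{O(\|\phi\|^2)}\)$, and likewise for $\phi'$; and Lemma \ref{w}, together with the a priori smallness $|\tau|=o(1)$ from \eqref{orders}, gives $\|e^{|W|+|\tau|}\|_{L^{4p}(\D^2)}+\|e^{(|W|+|\tau|)/2}\|_{L^{4p}(\partial\D^2)}=O(1)$.

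Multiplying these four bounds yields
$$\|\mathcal N^\sharp(\phi)-\mathcal N^\sharp(\phi')\|_{L^p}=O\(\|\phi-\phi'\|(\|\phi\|+\|\phi'\|)e^{O(\|\phi\|^2+\|\phi'\|^2)}\),$$
and summing $\sharp=\mathrm{Int}$ and $\sharp=\partial$ gives the advertised estimate on $\|\mathcal N(\phi)-\mathcal N(\phi')\|_p$. The second assertion follows by choosing $\phi'=0$ and observing that $\mathcal N(0)=0$.

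I do not foresee any serious obstacle: the proof is essentially bookkeeping. The only point requiring care is the choice of H\"older exponents so that $4p>1$ lies in the admissible range for all three of Lemmas \ref{emb}(3), \ref{mt} and \ref{w}, and a comment that $e^{|\phi|/2}\le e^{|\phi|}$ lets one treat the interior and boundary factors uniformly.
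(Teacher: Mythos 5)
Your proof is correct and follows essentially the same approach as the paper: a pointwise Taylor-type estimate of the nonlinearity, followed by H\"older's inequality with four equal $L^{4p}$-exponents and an appeal to Lemma~\ref{emb}, Lemma~\ref{mt} and \eqref{wp}. The only difference is that you derive the pointwise bound explicitly via the fundamental theorem of calculus (implicitly using convexity of the exponential to pass from $e^{|\phi|+|\phi'|}$ to $e^{|\phi|}+e^{|\phi'|}$), whereas the paper simply states the elementary estimate $e^t-t-e^s+s=O(|s-t|(|s|+|t|)(1+e^{s+t}))$ without derivation.
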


\begin{proof}
From \eqref{wp}, Lemma \ref{emb} and Lemma \ref{mt} we get:
\begin{eqnarray*}
&&\left\|\mathcal N^\mathrm{Int}(\phi)-\mathcal N^\mathrm{Int}(\phi')\right\|_{L^p\(\D^2\)}\\
&=&\left\|2K_\e(f(z))e^{V+W+\tau}\(e^\phi-\phi-e^{\phi'}+\phi'\)\right\|_{L^p\(\D^2\)}\\
&=&O\(\left\|e^{W+\tau}\(|\phi-\phi'|(|\phi|+|\phi'|)\(1+e^{\phi+\phi'}\)\)\right\|_{L^p\(\D^2\)}\)\\
&=&O\(\left\|e^{W+\tau}\right\|_{L^{4p}\(\D^2\)}\|\phi-\phi'\|_{L^{4p}\(\D^2\)}\(\|\phi\|_{L^{4p}\(\D^2\)}+\|\phi'\|_{L^{4p}\(\D^2\)}\)\left\|1+e^{\phi+\phi'}\right\|_{L^{4p}\(\D^2\)}\)\\
&=&O\(\|\phi-\phi'\|_{L^{4p}\(\D^2\)}\(\|\phi\|_{L^{4p}\(\D^2\)}+\|\phi'\|_{L^{4p}\(\D^2\)}\)e^{O\(\|\phi+\phi'\|^2\)}\)\\
&=&O\(\|\phi-\phi'\|(\|\phi\|+\|\phi'\|)e^{O\(\|\phi+\phi'\|^2\)}\),
\end{eqnarray*}
where we used the elementary estimate
$$e^t-t-e^s+s=O\(|s-t|(|s|+|t|)\(1+e^{s+t}\)\).$$
The estimate on $\mathcal N^\partial(\phi)$ is obtained similarly.
\end{proof}\

\begin{proposition}\label{contr}
Assume $\delta,|\eta|,|\tau|,|\e|\le\e_0\ll1$. Then, there exists a unique $(\phi,c_0,c_1,c_2)\in\mathbf H_\xi\times\R^3$ such that \eqref{projProb} has a solution, which additionally satisfies
\begin{equation}\label{phio}
\|\phi\|=O\(\frac{|\e|}{\log\frac1{|\e|}}\).
\end{equation}
\end{proposition}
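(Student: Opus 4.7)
The plan is to recast \eqref{projProb} as a fixed-point equation in $\mathbf K_\xi^\perp$ and apply Banach's contraction principle on the closed ball
$$B_{\rho_\e} := \left\{\phi\in\mathbf K_\xi^\perp : \|\phi\|\le\rho_\e\right\},\qquad \rho_\e := M\frac{|\e|}{\log\frac1{|\e|}},$$
for a sufficiently large $M>0$ and a sufficiently small $\e_0>0$.

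First I would determine the Lagrange multipliers $(c_0,c_1,c_2)$ in terms of $\phi$ so that the right-hand side of \eqref{projProb} fulfills the compatibility conditions \eqref{cd} required by Theorem \ref{key}. This amounts to solving a $3\times 3$ linear system for $(c_0,c_1,c_2)$ whose coefficient matrix, up to $O(\delta+|\eta|)$ perturbations when $\xi$ is close to $1$, is block-diagonal with entries $3\pi = |\D^2|+|\partial\D^2|$ and
$$2K(\xi)\int\limits_{\D^2} e^V\mathcal Z_i^2 + h(\xi)\frac{2\varphi(\xi)}{\varphi(\xi)^2+K(\xi)}\int\limits_{\partial\D^2} \mathcal Z_i^2,\quad i=1,2;$$
the off-diagonal entries (involving products $\mathcal Z_1\mathcal Z_2$, or $\mathcal Z_i$ paired with the constant mode) vanish by the parity of the functions \eqref{generators}. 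Assumption \eqref{cond1}, together with the explicit integrals from Proposition \ref{estZ01}, ensures that this matrix is uniformly invertible for $\xi$ near $1$, and yields the control
$$|c_0|+|c_1|+|c_2| = O\left(\|\mathcal E\|_p + \|\mathcal L\phi\|_p + \|\mathcal N(\phi)\|_p\right).$$

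With $(c_0,c_1,c_2)$ so chosen, Theorem \ref{key} defines a nonlinear operator $\mathcal T:\mathbf K_\xi^\perp \to \mathbf K_\xi^\perp$ by taking $\mathcal T(\phi)$ to be the unique solution in $\mathbf K_\xi^\perp$ of the resulting linear problem. Fixing $p\in(1,4/3)$ and combining estimate \eqref{key1} with Proposition \ref{e}, Lemma \ref{l} and Lemma \ref{n}, I obtain
$$\|\mathcal T(\phi)\| \le C_0\left(\frac{|\e|}{\log\frac1{|\e|}} + \frac{|\e|}{\log\frac1{|\e|}}\|\phi\| + \|\phi\|^2 e^{O(\|\phi\|^2)}\right),$$
so for $M$ large enough and $\e_0$ small enough $\mathcal T$ maps $B_{\rho_\e}$ into itself. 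The Lipschitz-type bound of Lemma \ref{n}, joined with the linearity of $\mathcal L$ and Lemma \ref{l}, gives for $\phi,\phi'\in B_{\rho_\e}$
$$\|\mathcal T(\phi)-\mathcal T(\phi')\| \le C\left(\frac{|\e|}{\log\frac1{|\e|}} + (\|\phi\|+\|\phi'\|)e^{O(\|\phi\|^2+\|\phi'\|^2)}\right)\|\phi-\phi'\| \le \frac12\|\phi-\phi'\|,$$
whence a unique fixed point $\phi\in B_{\rho_\e}$ is produced, satisfying the required bound \eqref{phio}.

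The main technical obstacle I anticipate is the uniform invertibility, with respect to $\xi$ in a neighborhood of $1$, of the $3\times 3$ system determining the multipliers: one must justify that the off-diagonal couplings between the constant mode and the kernel modes $\mathcal Z_1,\mathcal Z_2$ remain genuinely subleading perturbations and that the diagonal quantities are bounded away from zero. This is precisely the role of hypothesis \eqref{cond1}, which guarantees $\xi\in\mathfrak C$ for all $\xi$ close enough to $1$, and it is what ultimately forces the functional framework to be posed on $\mathbf H_\xi$ (which excludes the constants) rather than on all of $H^1(\D^2)$.
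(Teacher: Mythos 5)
Your proof is correct and follows essentially the same strategy as the paper: recast \eqref{projProb} as a fixed-point equation on $\mathbf K_\xi^\perp$, invoke the linear theory of Theorem \ref{key} together with the estimates of Proposition \ref{e} and Lemmas \ref{l}, \ref{n}, and conclude by Banach's contraction principle (the paper merely packages the Lagrange multipliers into an invertible operator $\mathcal A=\Pi_{\mathbf H}\circ\mathcal J_\xi\circ\Pi_{\mathbf L}\circ\mathcal L_0$ rather than writing the $3\times3$ system explicitly). One small correction: \eqref{cond1} is not what makes the $3\times3$ multiplier matrix invertible --- that is automatic, since the diagonal entries $3\pi$ and $\pi/A_0$ from Proposition \ref{z1z2} never vanish under \eqref{h2k}; the role of \eqref{cond1} is precisely the one you identify in your closing paragraph, namely ensuring $\xi\in\mathfrak C$ so that $\mathbf H_\xi$ excludes constants and the whole functional setting applies.
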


\begin{proof}
Let us consider the operator
$$\mathcal A:=\Pi_{\mathbf H}\circ\mathcal J_\xi\circ\Pi_{\mathbf L}\circ\mathcal L_0:\mathbf K_\xi^\perp\to\mathbf K_\xi^\perp,$$
where $\Pi_{\mathbf H}:\mathbf H_\xi\to\mathbf K_\xi^\perp$ is the standard projection in Hilbert spaces, $\mathcal J_\xi:\mathbf L\to\mathbf H_\xi$ is as in the proof of Theorem \ref{key} and $\Pi_{\mathbf L}:L^p\(\D^2\)\times L^p\(\partial\D^2\)\to\mathbf L$ is another projection defined by
$$\Pi_{\mathbf L}:(f,g)\mapsto\(f-\frac1{3\pi}\(\int\limits_{\D^2}f+\int\limits_{\partial\D^2}g\),g-\frac1{3\pi}\(\int\limits_{\D^2}f+\int\limits_{\partial\D^2}g\)\).$$
In other words, $\psi=\mathcal A\phi$ if and only if
$$\begin{cases}
-\Delta\psi=\mathcal L_0^\mathrm{Int}\phi+a_0+a_12K(\xi)e^V\mathcal Z_1+a_22K(\xi)e^V\mathcal Z_2&\hbox{in}\;\D^2\\
\partial_\nu\psi=\mathcal L_0^\partial\phi+a_0+a_1h(\xi)e^\frac V2\mathcal Z_1+a_2h(\xi)e^\frac V2\mathcal Z_2&\hbox{on}\;\partial\D^2,
\end{cases}$$
with $a_0,a_1,a_2$ suitable constants such that the right-hand side satisfies the orthogonality conditions \eqref{cd}. Thanks to Theorem \ref{key} and Sobolev embeddings, $\mathcal A$ is invertible and $\left\|\mathcal A^{-1}\phi\right\|=O(\|\phi\|)$ uniformly.

Therefore, solutions to \eqref{projProb} are fixed point of the map $\mathcal T_\xi:\mathbf K_\xi^\perp\to\mathbf K_\xi^\perp$ defined by:
$$\mathcal T_\xi:\phi\mapsto\(\mathcal A^{-1}\circ\Pi_{\mathbf H}\circ\mathcal J_\xi\circ\Pi_{\mathbf L}\)(\mathcal E+\mathcal L\phi+\mathcal N(\phi)).$$
In view of the continuity of the linear operators  $\mathcal A^{-1}$, $\Pi_{\mathbf H}$, $\mathcal J_\xi$ and $\Pi_{\mathbf L}$, and using Proposition \ref{e} and Lemmas \ref{l}, \ref{n} we have:
\begin{eqnarray}
\nonumber\|\mathcal T_\xi(\phi)\|&=&O(\|\mathcal E\|_p+\|\mathcal L\phi\|_p+\|\mathcal N(\phi)\|_p)\\
\nonumber&=&O\(\frac{|\e|}{\log\frac1{|\e|}}+\frac{|\e|}{\log\frac1{|\e|}}\|\phi\|+\|\phi\|^2e^{O\(\|\phi\|^2\)}\)\\
\label{tphi}&=&O\(\frac{|\e|}{\log\frac1{|\e|}}+\|\phi\|^2e^{O\(\|\phi\|^2\)}\),
\end{eqnarray}
and
\begin{eqnarray}
\nonumber\left\|\mathcal T_\xi(\phi)-\mathcal T_\xi(\phi')\right\|&=&O\(\left\|\mathcal L(\phi-\phi')\right\|_p+\left\|\mathcal N(\phi)-\mathcal N(\phi')\right\|_p\)\\
\label{tphi2}&=&O\(\|\phi-\phi'\|\(\frac{|\e|}{\log\frac1{|\e|}}+(\|\phi\|+\|\phi'\|)e^{O\(\|\phi\|^2+\|\phi'\|^2\)}\)\).
\end{eqnarray}
Up to taking $R$ large enough, from \eqref{tphi} we have 
$$\|\phi\|\le R\frac{|\e|}{\log\frac1{|\e|}}\quad\quad\quad\Rightarrow\quad\quad\quad\|\mathcal T_\xi(\phi)\|\le R\frac{|\e|}{\log\frac1{|\e|}};$$
if, in addition, $\delta,|\eta|,|\tau|,\e$ are small enough, then \eqref{tphi2} gives
$$\sup_{\phi\ne\phi'}\frac{\|\mathcal T_\xi(\phi)-\mathcal T_\xi(\phi')\|}{\|\phi-\phi'\|}<1,$$
for every $\phi,\phi'$ such that $\|\phi\|,\left\|\phi'\right\|\leq R\frac{|\e|}{\log\frac1{|\e|}}$. Therefore, $\mathcal T_\xi$ is a contraction on a suitable ball of $\mathbf K_\xi^\perp$, hence it has a unique fixed point that also satisfies \eqref{phio}.
\end{proof}

\medskip

\section{Estimates on the projections}

Let $\phi$ be the solution to the problem \eqref{projProb} provided by Proposition \ref{contr}. Notice that, if we prove
\begin{equation*}\label{zero}
c_0=c_1=c_2=0,
\end{equation*}
then $\phi$ actually solves \eqref{eqphi}. Hence, we need to indentify the exact expression of these constants. 

Let us multiply \eqref{projProb} by $\mathcal Z_1$ and integrate: due to the angular symmetry of $\mathcal Z_1,\mathcal Z_2$ and the radiality of $V$, we have
$$2K(\xi)\int\limits_{\D^2}e^V\mathcal Z_1\mathcal Z_2=\frac{h(\xi)2\varphi(\xi)}{\varphi(\xi)^2+K(\xi)}\int\limits_{\partial\D^2}\mathcal Z_1\mathcal Z_2=2K(\xi)\int\limits_{\D^2}e^V\mathcal Z_i=\frac{h(\xi)2\varphi(\xi)}{\varphi(\xi)^2+K(\xi)}\int\limits_{\partial\D^2}\mathcal Z_i=0\quad i=1,2,$$
and therefore
\begin{eqnarray*}
c_12K(\xi)\int\limits_{\D^2}e^V\mathcal Z_1^2&=&\int\limits_{\D^2}\mathcal L^\mathrm{Int}_0\phi\mathcal Z_1-\int\limits_{\D^2}\mathcal E^\mathrm{Int}\mathcal Z_1-\int\limits_{\D^2}\mathcal L^\mathrm{Int}\phi\mathcal Z_1-\int\limits_{\D^2}\mathcal N^\mathrm{Int}(\phi)\mathcal Z_1,\\
c_1\frac{h(\xi)2\varphi(\xi)}{\varphi(\xi)^2+K(\xi)}\int\limits_{\partial\D^2}\mathcal Z_1^2&=&\int\limits_{\partial\D^2}\mathcal L^\partial_0\phi\mathcal Z_1-\int\limits_{\partial\D^2}\mathcal E^\partial\mathcal Z_1-\int\limits_{\partial\D^2}\mathcal L^\partial\phi\mathcal Z_1-\int\limits_{\partial\D^2}\mathcal N^\partial(\phi)\mathcal Z_1.
\end{eqnarray*}
Integrating by parts and using \eqref{eqlin},
$$\int\limits_{\D^2}\(\mathcal L^\mathrm{Int}_0\phi\)\mathcal Z_1+\int\limits_{\partial\D^2}\(\mathcal L^\partial_0\phi\)\mathcal Z_1=\int\limits_{\D^2}\(\mathcal L^\mathrm{Int}_0\mathcal Z_1\)\phi+\int\limits_{\partial\D^2}\(\mathcal L^\partial_0\mathcal Z_1\)\phi=0,$$
and thus
\begin{eqnarray*}
&&c_1\(2K(\xi)\int\limits_{\D^2}e^V\mathcal Z_1^2+\frac{h(\xi)2\varphi(\xi)}{\varphi(\xi)^2+K(\xi)}\int\limits_{\partial\D^2}\mathcal Z_1^2\)\\
&=&-\int\limits_{\D^2}\mathcal E^\mathrm{Int}\mathcal Z_1-\int\limits_{\D^2}\mathcal L^\mathrm{Int}\phi\mathcal Z_1-\int\limits_{\D^2}\mathcal N^\mathrm{Int}(\phi)\mathcal Z_1-\int\limits_{\partial\D^2}\mathcal E^\partial\mathcal Z_1-\int\limits_{\partial\D^2}\mathcal L^\partial\phi\mathcal Z_1-\int\limits_{\partial\D^2}\mathcal N^\partial(\phi)\mathcal Z_1.
\end{eqnarray*}
A similar formula holds true for $\mathcal Z_2$. Moreover, since $\phi\in\mathbf H_\xi$, then
$$\int\limits_{\D^2}\mathcal L^\mathrm{Int}_0\phi+\int\limits_{\partial\D^2}\mathcal L^\partial_0\phi=0,$$
and integrating \eqref{projProb} gives
$$3\pi c_0=-\int\limits_{\D^2}\mathcal E^\mathrm{Int}-\int\limits_{\D^2}\mathcal L^\mathrm{Int}\phi-\int\limits_{\D^2}\mathcal N^\mathrm{Int}(\phi)-\int\limits_{\partial\D^2}\mathcal E^\partial-\int\limits_{\partial\D^2}\mathcal L^\partial\phi-\int\limits_{\partial\D^2}\mathcal N^\partial(\phi).$$
Recalling \eqref{orders}, let us estimate all the integral terms involved.

\begin{proposition}\label{EintZ1}
\begin{eqnarray*}
\int\limits_{\D^2}\mathcal E^\mathrm{Int}\mathcal Z_1&=&\frac{4\pi}{\(\varphi(\xi)^2+K(\xi)\)^2}\delta\(\partial_1K(1)+\eta\partial_{12}K(1)+\e\partial_1G(1)-\delta\log\frac1\delta\frac{2\varphi(1)^2}{\varphi(1)^2+K(1)}\Delta K(1)\)\\
&+&\int\limits_{\D^2}2K(\xi)We^V\mathcal Z_1+O\(\frac{|\e|^2}{\log^2\frac1{|\e|}}\).
\end{eqnarray*}
\end{proposition}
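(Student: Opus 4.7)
The plan is to expand $\mathcal E^{\mathrm{Int}}=\mathcal E^{\mathrm{Int}}_1+\mathcal E^{\mathrm{Int}}_2+\mathcal E^{\mathrm{Int}}_3$ as in \eqref{split1} and compute $\int_{\D^2}\mathcal E^{\mathrm{Int}}_j\mathcal Z_1$ for $j=1,2,3$ separately. The third piece is immediate: $e^V$ is radial and $\mathcal Z_1(z)=\langle z,\xi\rangle/(\varphi(\xi)^2+K(\xi)|z|^2)$ is odd under $z\mapsto -z$, so $\int_{\D^2}e^V\mathcal Z_1=0$ and hence $\int_{\D^2}\mathcal E^{\mathrm{Int}}_3\mathcal Z_1=2\e G(\xi)\int_{\D^2}e^V\mathcal Z_1=0$. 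The same parity also kills every piece with no angular dependence, notably the $\tau$-part arising from $\mathcal E^{\mathrm{Int}}_1$.

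For $\mathcal E^{\mathrm{Int}}_2=2(K(f(z))-K(\xi)+\e(G(f(z))-G(\xi)))e^V$ I would start from the elementary identity $f(z)-\xi=\delta(z-\xi)/(1+(1-\delta)\overline\xi z)$ obtained directly from \eqref{confo}, Taylor expand $K$ around $\xi$ up to second order and $G$ up to first order. By the reflection across the line spanned by $\xi$, which preserves $e^V\mathcal Z_1$, only the $\xi$-components of $\nabla K(\xi)\cdot(f-\xi)$ and $\nabla G(\xi)\cdot(f-\xi)$ survive integration, giving contributions of the form $\delta\partial_\xi K(\xi)\cdot J$ and $\e\delta\partial_\xi G(\xi)\cdot J$, where $J$ is an integral evaluated, via Proposition \ref{f} and the computations in the appendix, to $4\pi/(\varphi(\xi)^2+K(\xi))^2+O(\delta)$. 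Writing $\xi=e^{\imath\eta}$ and using $\partial_\xi K(\xi)=\partial_1 K(1)+\eta\partial_{12}K(1)+O(\eta^2)$, together with $G(1)=0$ from \eqref{gi1} and $\eta=O(|\e|)$ (so $\partial_\xi G(\xi)=\partial_1 G(1)+O(|\e|)$), these produce the three terms $\delta\partial_1 K(1)$, $\delta\eta\partial_{12}K(1)$ and $\delta\e\partial_1 G(1)$ of the bracket. The quadratic $K$-Taylor term contains $|f-\xi|^2=\delta^2|z-\xi|^2/|1+(1-\delta)\overline\xi z|^2$, whose integrand concentrates at $z=-\xi$; splitting the domain according to $|z+\xi|\le\delta$ versus $|z+\xi|\ge\delta$, and using the appendix identities again, yields the $-\delta^2\log(1/\delta)\Delta K(1)$ contribution with coefficient $\frac{2\varphi(1)^2}{\varphi(1)^2+K(1)}\cdot\frac{4\pi}{(\varphi(\xi)^2+K(\xi))^2}$, modulo $O(\delta^2)$.

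Finally, for $\mathcal E^{\mathrm{Int}}_1=2K_\e(f)(e^{W+\tau}-1)e^V$ I would write $K_\e(f)=K(\xi)+O(\delta+|\e|)$ and $e^{W+\tau}-1=W+\tau+O((W+\tau)^2)$. The $\tau$ contribution vanishes by the parity argument above; the linear $W$ contribution produces the kept term $\int 2K(\xi)We^V\mathcal Z_1$ plus a remainder $O(\delta^2)$ controlled by H\"older with the $L^p$ estimates of Lemma \ref{w} and Proposition \ref{f}; the $O((W+\tau)^2)$ piece is $O(\delta^2+\tau^2)$ by Lemma \ref{w}. Converting the orders of $\delta,|\tau|,|\eta|$ via \eqref{orders}, every remainder fits into $O(|\e|^2/\log^2(1/|\e|))$. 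The main technical obstacle is the precise evaluation of the two integrals responsible for the leading coefficient $4\pi/(\varphi(\xi)^2+K(\xi))^2$ and for the logarithmic factor $\log(1/\delta)$ in the second-order $K$-Taylor term: both require a careful splitting according to the distance from $-\xi$ combined with the integral identities from the appendix, together with consistent tracking of angular symmetries so that only the announced derivatives of $K$ and $G$ survive.
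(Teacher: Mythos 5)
Your proposal follows the paper's proof essentially step by step: the same three-term decomposition $\mathcal E^\mathrm{Int}=\mathcal E^\mathrm{Int}_1+\mathcal E^\mathrm{Int}_2+\mathcal E^\mathrm{Int}_3$ from \eqref{split1}, the same parity/radial symmetry argument killing $\int e^V\mathcal Z_1$ (hence the $\tau$-term and $\mathcal E^\mathrm{Int}_3$), the same second-order Taylor expansion of $K$ in $\mathcal E^\mathrm{Int}_2$ evaluated via Propositions \ref{estKgrad} and \ref{estKlap} after concentrating near $-\xi$, and the same extraction of $\int 2K(\xi)We^V\mathcal Z_1$ from $\mathcal E^\mathrm{Int}_1$.

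One bookkeeping point in your treatment of $\mathcal E^\mathrm{Int}_1$ needs tightening. Writing $K_\e(f)=K(\xi)+O(\delta+|\e|)$ pointwise and then applying H\"older against $W$ yields a remainder $O\bigl((\delta+|\e|)\,\delta\bigr)=O\bigl(|\e|^2/\log\tfrac1{|\e|}\bigr)$, which is \emph{not} inside the claimed $O\bigl(|\e|^2/\log^2\tfrac1{|\e|}\bigr)$. The paper avoids this by decomposing more finely as in \eqref{eint1}: the $K(f)-K(\xi)$ part is $O(|f-\xi|)$, which against $\|W\|_{L^p}=O(\delta)$ from Lemma \ref{w} and $\|f-\xi\|_{L^p}=O(\delta)$ from Proposition \ref{f} gives $O(\delta^2)$; the $\e G(f)$ part must then be treated using $G(1)=0$, so $\e G(f)=O\bigl(|\e|(|\eta|+|f-\xi|)\bigr)=O\bigl(|\e|^2+|\e||f-\xi|\bigr)$, which integrates against $W$ to $O(|\e|^2\delta+|\e|\delta^2)$ and is small enough. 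You do invoke $G(1)=0$ for the $\mathcal E^\mathrm{Int}_2$ piece, so the needed refinement is at hand; with it the argument is complete and coincides with the paper's.
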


\begin{proof}
We split $\mathcal E^\mathrm{Int}=\mathcal E^\mathrm{Int}_1+\mathcal E^\mathrm{Int}_2+\mathcal E^\mathrm{Int}_3$ as in \eqref{split1} and we start by estimating the first term. We can write:
\begin{eqnarray}
\nonumber\mathcal E^\mathrm{Int}_1&=&2K(\xi)(W+\tau)e^V+2(K(f(z))-K(\xi))(W+\tau)e^V\\
&+&2(K(f(z))+\e G(f(z)))\(e^{W+\tau}-1-(W+\tau)\)e^V\nonumber\\
\label{eint1}&=&2K(\xi)We^V+2K(\xi)\tau e^V+O\(|f(z)-\xi|(|W|+|\tau|)+(|W|+|\tau|)^2\(1+e^{W+\tau}\)\).
\end{eqnarray}
Since $V$ is radial, we have $\int\limits_{\D^2}e^V\mathcal Z_1=0$; therefore, in view of Lemma \ref{w} and Proposition \ref{f},
\begin{eqnarray}
\nonumber\int\limits_{\D^2}\mathcal E^\mathrm{Int}_1\mathcal Z_1&=&2K(\xi)\int\limits_{\D^2}We^V\mathcal Z_1+2K(\xi)\tau\int\limits_{\D^2}e^V\mathcal Z_1\\
&+&\int\limits_{\D^2}O\(|f(z)-\xi|(|W|+|\tau|)+(|W|+|\tau|)^2\(1+e^{W+\tau}\)\)\nonumber\\
\label{eint1z1}&=&\int\limits_{\D^2}2K(\xi)We^V\mathcal Z_1+O\(\delta^2+\delta|\tau|+|\tau|^2\).
\end{eqnarray}
To estimate the term with $\mathcal E^\mathrm{Int}_2$, we make the following Taylor expansion:
\begin{eqnarray}
\nonumber\mathcal E^\mathrm{Int}_2&=&2\langle\nabla K(\xi),f(z)-\xi\rangle e^V+\left\langle D^2K(\xi)(f(z)-\xi),f(z)-\xi\right\rangle e^V+2\e\langle\nabla G(\xi),f(z)-\xi\rangle e^V\\
\!\!\!\!\!\!\!\!\!\!\!\!\!\!\!\!\!\!\!\!\nonumber&+&O\(|f(z)-\xi|^3+|\e||f(z)-\xi|^2\)\\
\!\!\!\!\!\!\!\!\!\!\!\!\!\!\!\!\!\!\!\!\nonumber&=&2\delta\left\langle\nabla K(\xi),\frac{z-\xi}{1+(1-\delta)\overline\xi z}\right\rangle e^V+\delta^2\left\langle D^2K(\xi)\frac{z-\xi}{1+(1-\delta)\overline\xi z},\frac{z-\xi}{1+(1-\delta)\overline\xi z}\right\rangle e^V\\
\!\!\!\!\!\!\!\!\!\!\!\!\!\!\!\!\!\!\!\!\label{eint2}&+&2\delta\e\left\langle\nabla G(\xi),\frac{z-\xi}{1+(1-\delta)\overline\xi z}\right\rangle e^V+O\(\frac{\delta^3}{(\delta+|z+\xi|)^3}+\frac{|\e|\delta^2}{(\delta+|z+\xi|)^2}\),
\end{eqnarray}
where we used the definition of $f(z)$ and \eqref{fxi}.

The terms involving the first derivatives can be handled by making a rotation $w=\overline\xi z$ and then using Proposition \ref{estKgrad}:
\begin{eqnarray*}
&&\int\limits_{\D^2}\left\langle\nabla K(\xi),\frac{z-\xi}{1+(1-\delta)\overline\xi z}\right\rangle e^V\mathcal Z_1\\
&=&\int\limits_{\D^2}\left\langle\nabla K(\xi),\xi\frac{w-1}{1+(1-\delta)w}\right\rangle\frac{4\varphi(\xi)^2}{\(\varphi(\xi)^2+K(\xi)|w|^2\)^3}w_1dw\\
&=&\int\limits_{\D^2}\left\langle\nabla K(\xi),\xi\frac{(1-\delta)^2|w|^2-1+\delta w_1+\imath(2-\delta)w_2}{|1+(1-\delta)w|^2}\right\rangle\frac{4\varphi(\xi)^2}{\(\varphi(\xi)^2+K(\xi)|w|^2\)^3}w_1dw\\
&=&\langle\nabla K(\xi),\xi\rangle\int\limits_{\D^2}\frac{(1-\delta)^2|w|^2-1+\delta w_1}{|1+(1-\delta)w|^2}\frac{4\varphi(\xi)^2}{\(\varphi(\xi)^2+K(\xi)|w|^2\)^3}w_1dw\\
&+&\left\langle\nabla K(\xi),\xi^\perp\right\rangle\int\limits_{\D^2}\frac{(2-\delta)w_2}{|1+(1-\delta)w|^2}\frac{4\varphi(\xi)^2}{\(\varphi(\xi)^2+K(\xi)|w|^2\)^3}w_1dw\\
&=&\(\frac{2\pi}{\(\varphi(\xi)^2+K(\xi)\)^2}+O(\delta)\)\langle\nabla K(\xi),\xi\rangle\\
&=&\(\frac{2\pi}{\(\varphi(\xi)^2+K(\xi)\)^2}+O(\delta)\)\left\langle\nabla K(1)+\eta(\partial_{12}K(1)+\imath\partial_{22}K(1))+O\(|\eta|^2\),1+O(|\eta|)\right\rangle\\
&=&\frac{2\pi}{\(\varphi(\xi)^2+K(\xi)\)^2}(\partial_1K(1)+\eta\partial_{12}K(1))+O\(\delta+|\eta|^2\).
\end{eqnarray*}
Likewise,
$$\int\limits_{\D^2}\langle\nabla G(\xi),f(z)-\xi\rangle e^V\mathcal Z_1=\frac{2\pi}{\(\varphi(\xi)^2+K(\xi)\)^2}\partial_1G(1)+O(\delta+|\eta|).$$
To estimate the second order integral we first observe that
$$\frac{z-\xi}{1+(1-\delta)\overline\xi z}=\frac{-2\xi}{1+(1-\delta)\overline\xi z}+O\(\frac{|z+\xi|}{\delta+|z+\xi|}\),$$
therefore
\begin{eqnarray*}
&&\int\limits_{\D^2}\left\langle D^2K(\xi)\frac{z-\xi}{1+(1-\delta)\overline\xi z},\frac{z-\xi}{1+(1-\delta)\overline\xi z}\right\rangle e^V\mathcal Z_1\\
&=&\int\limits_{\D^2}\(\left\langle D^2K(\xi)\frac{-2\xi}{1+(1-\delta)\overline\xi z},\frac{-2\xi}{1+(1-\delta)\overline\xi z}\right\rangle+O\(\frac{|z+\xi|}{(\delta+|z+\xi|)^2}\)\)\(\frac{-4\varphi(\xi)^2}{\(\varphi(\xi)^2+K(\xi)\)^3}+O(|z+\xi|)\)dz\\
&=&-\frac{16\varphi(\xi)^2}{\(\varphi(\xi)^2+K(\xi)\)^3}\int\limits_{\D^2}\(\left\langle D^2K(\xi)\frac\xi{1+(1-\delta)\overline\xi z},\frac\xi{1+(1-\delta)\overline\xi z}\right\rangle dz+O\(\frac{|z+\xi|}{(\delta+|z+\xi|)^2}\)\)dz\\
&=&-\frac{16\varphi(\xi)^2}{\(\varphi(\xi)^2+K(\xi)\)^3}\frac1{(1-\delta)^2}\int\limits_{\Omega_\delta}\left\langle D^2K(\xi)\frac\xi{1+w},\frac\xi{1+w}\right\rangle dw+O(1),
\end{eqnarray*}
where we used the change of variable $w=\(\frac{1-\delta}\delta\)\(\overline\xi z+1\)$, which transforms the disk $\D^2$ into
$$\Omega_\delta:=\left\{\left|w-\frac{1-\delta}\delta\right|\le\frac{1-\delta}\delta\right\},$$
and 
$$\int\limits_{\D^2}\left\langle D^2K(\xi)\frac\xi{1+(1-\delta)\overline\xi z},\frac\xi{1+(1-\delta)\overline\xi z}\right\rangle dz=\frac1{(1-\delta)^2}\int\limits_{\Omega_\delta}\left\langle D^2K(\xi)\frac\xi{1+w},\frac\xi{1+w}\right\rangle dw.$$
Using the formulas
$$\frac\xi{1+w}=\frac{e^{\imath\eta}(1+\overline w)}{|1+w|^2}=\frac1{\((1+w_1)^2+w_2^2\)^2}\((1+w_1)\cos\eta+w_2\sin\eta+\imath((1+w_1)\sin\eta-w_2\cos\eta)\),$$
$$\left\langle D^2K(\xi)(z_1+\imath z_2),z_1+\imath z_2\right\rangle=\frac{\Delta K(\xi)}2|z|^2+\frac{\partial_{11}K(\xi)-\partial_{22}K(\xi)}2\(z_1^2-z_2^2\)+2\partial_{12}K(\xi)z_1z_2,$$
and Proposition \ref{estKlap} we get:
\begin{eqnarray*}
&&\int\limits_{\Omega_\delta}\left\langle D^2K(\xi)\frac\xi{1+w},\frac\xi{1+w}\right\rangle\\
&=&\frac{\Delta K(\xi)}2\int\limits_{\Omega_\delta}\frac{dw}{(1+w_1)^2+w_2^2}\\
&+&\(\frac{\cos^2\eta-\sin^2\eta}2(\partial_{11}K(\xi)-\partial_{22}K(\xi))+\cos\eta\sin\eta\partial_{12}K(\xi)\)\int\limits_{\Omega_\delta}\frac{(1+w_1)^2-w_2^2}{\((1+w_1)^2+w_2^2\)^2}dw\\
&+&\(\cos\eta\sin\eta(\partial_{11}K(\xi)-\partial_{22}K(\xi))+\(\cos^2\eta-\sin^2\eta\)\partial_{12}K(\xi)\)\int\limits_{\Omega_\delta}\frac{(1+w_1)w_2}{\((1+w_1)^2+w_2^2\)^2}dw\\
&=&\frac\pi2\Delta K(\xi)\log\frac1\delta+O(1).
\end{eqnarray*}
Therefore,
\begin{eqnarray*}
&&\int\limits_{\D^2}\left\langle D^2K(\xi)\frac{z-\xi}{1+(1-\delta)\overline\xi z},\frac{z-\xi}{1+(1-\delta)\overline\xi z}\right\rangle e^V\mathcal Z_1\\
&=&-\frac{16\pi\varphi(\xi)^2}{\(\varphi(\xi)^2+K(\xi)\)^3}(1+O(\delta))\(\frac\pi2\Delta K(\xi)\log\frac1\delta+O(1)\)+O(1)\\
&=&-\frac{8\pi\varphi(\xi)^2}{\(\varphi(\xi)^2+K(\xi)\)^3}\Delta K(1)\log\frac1\delta+O(1);
\end{eqnarray*}
since we have
\begin{eqnarray*}
\int\limits_{\D^2}\frac{\delta^3}{(\delta+|z+\xi|)^3}\mathcal Z_1&=&O\(\int\limits_{\D^2}\frac{\delta^3}{(\delta+|z+\xi|)^3}\)=O\(\delta^2\)\\
\int\limits_{\D^2}\frac{|\e|\delta^2}{(\delta+|z+\xi|)^2}\mathcal Z_1&=&O\(\int\limits_{\D^2}\frac{|\e|\delta^2}{(\delta+|z+\xi|)^2}\)=O\(\delta^2\log\frac1\delta|\e|\),
\end{eqnarray*}
then, \eqref{eint2} gives
\begin{eqnarray}
\nonumber\int\limits_{\D^2}\mathcal E^\mathrm{Int}_2\mathcal Z_1&=&\frac{4\pi}{\(\varphi(\xi)^2+K(\xi)\)^2}\delta\(\partial_1K(1)+\eta\partial_{12}K(1)+\e\partial_1G(1)-\delta\log\frac1\delta\frac{2\varphi(1)^2}{\varphi(1)^2+K(1)}\Delta K(1)\)\\
\!\!\!\!\!\!\!\!\!\!\!\!\!\!\!\!\!\!\!\!\label{eint2z1}&+&O\(\delta\(\delta+|\eta|^2+\delta\log\frac1\delta|\eta|+\delta\log\frac1\delta|\e|+|\eta||\e|\)\).
\end{eqnarray}
Finally, since $e^V$ is radially symmetric and $\mathcal Z_1$ is odd with respect to $\xi$, then
$$\int\limits_{\D^2}\mathcal E^\mathrm{Int}_3\mathcal Z_1=0,$$
and putting it together with \eqref{eint1z1} and \eqref{eint2z1} the proof is concluded.
\end{proof}\

\begin{proposition}\label{EbdrZ1}
\begin{eqnarray*}
\int\limits_{\partial\D^2}\mathcal E^\partial\mathcal Z_1&=&\frac{8\pi\varphi(\xi)}{\(\varphi(\xi)^2+K(\xi)\)^2}\delta\((-\Delta)^\frac12h(1)+\eta(-\Delta)^\frac12h'(1)+\e(-\Delta)^\frac12I(1)\right.\\
&-&\left.\delta\log\frac1\delta\frac{4\varphi(1)}{\varphi(1)^2+K(1)}\((-\Delta)^\frac12h(1)^2+h'(1)^2\)\)-2K(\xi)\int\limits_{\D^2}We^V\mathcal Z_1\\
&+&O\(\frac{|\e|^2}{\log^2\frac1{|\e|}}\).
\end{eqnarray*}
\end{proposition}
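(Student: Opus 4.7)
The strategy is to mirror the proof of Proposition \ref{EintZ1}: use the splitting $\mathcal E^\partial=\mathcal E^\partial_1+\mathcal E^\partial_2+\mathcal E^\partial_3$ from \eqref{split2} and project each piece against $\mathcal Z_1$. The easiest term is $\mathcal E^\partial_3$: since $e^{V/2}$ is constant on $\partial\D^2$, $\mathcal E^\partial_3$ is itself constant, and because $\mathcal Z_1$ has zero boundary mean,
$$\int_{\partial\D^2}\mathcal E^\partial_3\mathcal Z_1=0.$$
For the same reason the constant $\tau$-contribution arising from $\mathcal E^\partial_1$ disappears.

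For $\mathcal E^\partial_1$, expand $e^{(W+\tau)/2}-1=\tfrac12(W+\tau)+O((W+\tau)^2(1+e^{(W+\tau)/2}))$ and decompose $h(f(z))=h(\xi)+(h(f(z))-h(\xi))$. After controlling the quadratic remainder via \eqref{wp} and Lemma \ref{w}, this produces
$$\int_{\partial\D^2}\mathcal E^\partial_1\mathcal Z_1=h(\xi)\,e^{V/2}\big|_{\partial\D^2}\!\int_{\partial\D^2}W\mathcal Z_1+2e^{V/2}\big|_{\partial\D^2}\!\int_{\partial\D^2}(h(f(z))-h(\xi))\mathcal Z_1+\text{l.o.t.}$$
The first integral is then rewritten by applying Green's identity to the pair $(W,\mathcal Z_1)$: since $W$ is harmonic in $\D^2$ by \eqref{eqw} and $\mathcal Z_1$ satisfies \eqref{eqlin}, one gets
$$h(\xi)\,e^{V/2}\!\int_{\partial\D^2}W\mathcal Z_1=-2K(\xi)\!\int_{\D^2}We^V\mathcal Z_1+2e^{V/2}\!\int_{\partial\D^2}(h(f(z))-h(\xi))\mathcal Z_1,$$
the boundary constant from $\partial_\nu W$ dropping out because $\int_{\partial\D^2}\mathcal Z_1=0$. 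This supplies exactly the $-2K(\xi)\int_{\D^2}We^V\mathcal Z_1$ appearing in the statement and doubles the coefficient of the remaining boundary integral. The term $\mathcal E^\partial_2$ is handled by Taylor-expanding $I(f(z))$ along $\partial\D^2$ around $\xi$; the value $I(\xi)=O(|\eta|)$ (from \eqref{gi1}) is negligible once multiplied by $\e$, and the contribution $\e(-\Delta)^{1/2}I(1)$ comes from the linear tangential expansion after changing variables via $f$ and applying a boundary analogue of Proposition \ref{intFracLap}.

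What remains is to evaluate
$$\int_{\partial\D^2}(h(f(z))-h(\xi))\mathcal Z_1\quad\text{and}\quad\int_{\partial\D^2}(h(f(z))-h(\xi))\,W\,\mathcal Z_1.$$
For the first, parametrise $z=e^{i\alpha}$, Taylor-expand $h$ along the boundary around $\xi$, and use the boundary asymptotics of $f(z)-\xi$ (from Proposition \ref{f}), together with the boundary identities already used for $I_1$ in Lemma \ref{w}; this is the source of the leading $\delta\bigl[(-\Delta)^{1/2}h(1)+\eta(-\Delta)^{1/2}h'(1)\bigr]$ contribution, in complete analogy with the role of Proposition \ref{intFracLap} on $\mathcal E^\partial_3$. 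The main obstacle is the logarithmic cross term $\int_{\partial\D^2}(h(f(z))-h(\xi))\,W\,\mathcal Z_1$: substituting the explicit leading expression for $W$ provided by Lemma \ref{w} (itself linear in $(-\Delta)^{1/2}h(\xi)$ and $h'(\xi)$) and pairing it with the linear expansion of $h(f(z))-h(\xi)$ produces a bilinear form in these two first-order boundary quantities, and integration against $\mathcal Z_1$ picks up a $\delta\log\tfrac1\delta$ factor from the logarithmic singularity of $W$ at $z=-\xi$; this is precisely what delivers the $-\delta\log\tfrac1\delta\cdot\tfrac{4\varphi(1)}{\varphi(1)^2+K(1)}\bigl((-\Delta)^{1/2}h(1)^2+h'(1)^2\bigr)$ correction. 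The difficulty will be to track the precise constants in this computation while absorbing all higher-order remainders into the announced $O(|\e|^2/\log^2(1/|\e|))$ error, using the bookkeeping \eqref{orders} on $\delta,\tau,\eta$.
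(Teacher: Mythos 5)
Your overall strategy matches the paper's: split $\mathcal E^\partial$ as in \eqref{split2}, handle the constant $\mathcal E^\partial_3$ trivially, use Green's identity on the pair $(W,\mathcal Z_1)$ to generate the $-2K(\xi)\int_{\D^2}We^V\mathcal Z_1$ term, evaluate $\int_{\partial\D^2}(h(f(z))-h(\xi))\mathcal Z_1$ via Proposition \ref{intB}, and treat the bilinear cross term $\int_{\partial\D^2}(h(f(z))-h(\xi))W\mathcal Z_1$ via the explicit leading form of $W$ from Lemma \ref{w} combined with Proposition \ref{logarctan}. These are exactly the paper's ingredients, and your Green's identity
$$h(\xi)e^{V/2}\int_{\partial\D^2}W\mathcal Z_1=-2K(\xi)\int_{\D^2}We^V\mathcal Z_1+2e^{V/2}\int_{\partial\D^2}(h(f(z))-h(\xi))\mathcal Z_1$$
is correct.

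However, your intermediate display for $\int_{\partial\D^2}\mathcal E^\partial_1\mathcal Z_1$ is wrong and produces a factor-of-two error. Expanding $\mathcal E^\partial_1=2h(f(z))\bigl(e^{(W+\tau)/2}-1\bigr)e^{V/2}$ and splitting $h(f(z))=h(\xi)+(h(f(z))-h(\xi))$, the linear-in-$W$ and linear-in-$\tau$ pieces paired against $\mathcal Z_1$ give
$$\int_{\partial\D^2}\mathcal E^\partial_1\mathcal Z_1=h(\xi)e^{V/2}\int_{\partial\D^2}W\mathcal Z_1+e^{V/2}\int_{\partial\D^2}(h(f(z))-h(\xi))W\mathcal Z_1+\tau\,e^{V/2}\int_{\partial\D^2}(h(f(z))-h(\xi))\mathcal Z_1+\text{l.o.t.},$$
that is, the boundary integral $\int_{\partial\D^2}(h(f(z))-h(\xi))\mathcal Z_1$ appears with coefficient $\tau=O\(|\e|/\log\frac1{|\e|}\)$, not $2$, \emph{before} Green's identity is applied (the constant $\tau h(\xi)e^{V/2}$ term vanishes against $\mathcal Z_1$, and the only remaining $\tau$-contribution is $\tau\,e^{V/2}\int(h(f(z))-h(\xi))\mathcal Z_1=O(\tau\delta)$, which is negligible). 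The coefficient $2$ on that boundary integral arises \emph{only} from the Green's identity substitution into $h(\xi)e^{V/2}\int W\mathcal Z_1$. Your display instead starts with coefficient $2$ and then says Green's identity ``doubles the coefficient of the remaining boundary integral'', yielding $4$; tracing this through with Proposition \ref{intB} and $e^{V/2}|_{\partial\D^2}=\frac{2\varphi(\xi)}{\varphi(\xi)^2+K(\xi)}$ would give $\frac{16\pi\varphi(\xi)}{(\varphi(\xi)^2+K(\xi))^2}\delta(-\Delta)^{\frac12}h(1)$ as the leading term, which is twice the value $\frac{8\pi\varphi(\xi)}{(\varphi(\xi)^2+K(\xi))^2}\delta(-\Delta)^{\frac12}h(1)$ in the statement. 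You need to drop the spurious $2e^{V/2}\int(h(f(z))-h(\xi))\mathcal Z_1$ before Green's identity; after substitution the correct total coefficient is $2+\tau$, matching \eqref{ebdr1z1} in the paper.
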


\begin{proof}
We split $\mathcal E^\partial=\mathcal E^\partial_1+\mathcal E^\partial_2+\mathcal E^\partial_3$ as in \eqref{split2}.
To estimate the first term we see that
\begin{eqnarray}
\nonumber\mathcal E^\partial_1&=&h(\xi)(W+\tau)e^\frac V2+(h(f(z))-h(\xi))(W+\tau)e^\frac V2+2h(f(z))\(e^\frac{W+\tau}2-1-\frac{W+\tau}2\)e^\frac V2\\
\!\!\!\!\!\!\!\!\!\!\nonumber&=&h(\xi)We^\frac V2+\tau h(\xi)e^\frac V2+(h(f(z))-h(\xi))We^\frac V2+\tau(h(f(z))-h(\xi))e^\frac V2\\
\!\!\!\!\!\!\!\!\!\!\label{ebdr1}&+&O\((|W|+|\tau|)^2\(1+e^{W+\tau}\)\);
\end{eqnarray}
since $W,\mathcal Z_1$ are solutions to \eqref{eqw} and \eqref{eqlin} respectively, an integration by parts gives:
\begin{eqnarray*}
&&\frac{h(\xi)2\varphi(\xi)}{\varphi(\xi)^2+K(\xi)}\int\limits_{\partial\D^2}W\mathcal Z_1\\
&=&\int\limits_{\partial\D^2}W\partial_\nu\mathcal Z_1\\
&=&\int\limits_{\partial\D^2}\mathcal Z_1\partial_\nu W+\int\limits_{\D^2}(W\Delta\mathcal Z_1-\mathcal Z_1\Delta W)\\
&=&\frac{2\varphi(\xi)}{\varphi(\xi)^2+K(\xi)}\int\limits_{\partial\D^2}2(h(f(z))-h(\xi))\mathcal Z_1-\frac1\pi \frac{2\varphi(\xi)}{\varphi(\xi)^2+K(\xi)}\int\limits_{\partial\D^2}(h(f(w))-h(\xi))dw\int\limits_{\partial\D^2}\mathcal Z_1\\
&&-2K(\xi)\int\limits_{\D^2}We^V\mathcal Z_1\\
&=&\frac{2\varphi(\xi)}{\varphi(\xi)^2+K(\xi)}\int\limits_{\partial\D^2}2(h(f(z))-h(\xi))\mathcal Z_1-2K(\xi)\int\limits_{\D^2}We^V\mathcal Z_1.
\end{eqnarray*}
The first integral can be estimated by means of Proposition \ref{intB}:
\begin{eqnarray*}
&&\frac{2\varphi(\xi)}{\varphi(\xi)^2+K(\xi)}\int\limits_{\partial\D^2}(h(f(z))-h(\xi))\mathcal Z_1\\
&=&\frac{2\varphi(\xi)}{\(\varphi(\xi)^2+K(\xi)\)^2}\int\limits_{\partial\D^2}(h(f(z))-h(\xi))\langle z,\xi\rangle dz\\
&=&\frac{4\pi\varphi(\xi)}{\(\varphi(\xi)^2+K(\xi)\)^2}\delta(-\Delta)^\frac12h(\xi)+O\(\delta^2\)\\
&=&\frac{4\pi\varphi(\xi)}{\(\varphi(\xi)^2+K(\xi)\)^2}\delta\((-\Delta)^\frac12h(1)+\eta(-\Delta)^\frac12h'(1)\)+O\(\delta\(\delta+|\eta|^2\)\).
\end{eqnarray*}
On the other hand, for the estimates involving $W$ we will use  \eqref{wi} and then Proposition \ref{logarctan}:
\begin{eqnarray*}
&&\frac{2\varphi(\xi)}{\varphi(\xi)^2+K(\xi)}\int\limits_{\partial\D^2}(h(f(z))-h(\xi))W\mathcal Z_1\\
&=&\frac{2\varphi(\xi)}{\(\varphi(\xi)^2+K(\xi)\)^2}\int\limits_{\partial\D^2}(h(f(z))-h(\xi))W(z)\langle z,\xi\rangle dz\\
&=&\frac{16\varphi(\xi)^2}{\(\varphi(\xi)^2+K(\xi)\)^3}\delta\((-\Delta)^\frac12h(\xi)\int\limits_{\partial\D^2}(h(f(z))-h(\xi))\log|z+\xi|\langle z,\xi\rangle dz\right.\\
&+&\left.h'(\xi)\int\limits_{\partial\D^2}(h(f(z))-h(\xi))\arctan\frac{\left\langle z,\xi^\perp\right\rangle}{1+\langle z,\xi\rangle}\langle z,\xi\rangle dz\)\\
&+&\frac{2\varphi(\xi)}{\(\varphi(\xi)^2+K(\xi)\)^2}\int\limits_{\partial\D^2}(h(f(z))-h(\xi))O\(\frac{\delta^2}{\delta+|z+\xi|}\(1+\left|\log\frac{|z+\xi|}\delta\right|\)\)\langle z,\xi\rangle dz\\
&=&\frac{-32\pi\varphi(\xi)^2}{\(\varphi(\xi)^2+K(\xi)\)^3}\delta\(\((-\Delta)^\frac12h(\xi)^2+h'(\xi)^2\)\delta\log\frac1\delta+O(\delta)\)\\
&+&O\(\int\limits_{\partial\D^2}\frac{\delta^3}{\(\delta+|z+\xi|\)^2}\(1+\left|\log\frac{|z+\xi|}\delta\right|\)dz\)\\
&=&\frac{-32\pi\varphi(\xi)^2}{\(\varphi(\xi)^2+K(\xi)\)^3}\delta^2\log\frac1\delta\((-\Delta)^\frac12h(\xi)^2+h'(\xi)^2\)+O\(\delta^2\)+O\(\int\limits_0^{+\infty}\frac{\delta^2}{(1+t)^2}(1+|\log t|)dt\)\\
&=&\frac{-32\pi\varphi(\xi)^2}{\(\varphi(1)^2+K(1)\)^3}\delta^2\log\frac1\delta\((-\Delta)^\frac12h(1)^2+h'(1)^2\)+O\(\delta^2+\delta^2\log\frac1\delta|\eta|\).
\end{eqnarray*}
Moreover, since $e^\frac V2$ is constant on $\partial\D^2$, then $\int\limits_{\partial\D^2}e^\frac V2\mathcal Z_1=0$, and therefore the decomposition \eqref{ebdr1} gives:
\begin{eqnarray}
\nonumber\int\limits_{\partial\D^2}\mathcal E^\partial_1\mathcal Z_1&=&(2+\tau)\frac{2\varphi(\xi)}{\varphi(\xi)^2+K(\xi)}\int\limits_{\partial\D^2}(h(f(z))-h(\xi))\mathcal Z_1+\frac{2\varphi(\xi)}{\varphi(\xi)^2+K(\xi)}\int\limits_{\partial\D^2}(h(f(z))-h(\xi))W\mathcal Z_1\\
\nonumber &-&2K(\xi)\int\limits_{\D^2}We^V\mathcal Z_1+\int\limits_{\partial\D^2}O\((|W|+|\tau|)^2\(1+e^{W+\tau}\)\)\mathcal Z_1\\
\nonumber&=&(2+O(|\tau|))\frac{4\pi\varphi(\xi)}{\(\varphi(\xi)^2+K(\xi)\)^2}\delta\((-\Delta)^\frac12h(1)+\eta(-\Delta)^\frac12h'(1)\)+O\(\delta\(\delta+|\eta|^2\)\)\\
\nonumber&-&\frac{-32\pi\varphi(\xi)^2}{\(\varphi(\xi)^2+K(\xi)\)^3}\delta^2\log\frac1\delta\((-\Delta)^\frac12h(1)^2+h'(1)^2\)O\(\delta^2+\delta^2\log\frac1\delta|\eta|\)\\
\nonumber&-&2K(\xi)\int\limits_{\D^2}We^V\mathcal Z_1+\int\limits_{\partial\D^2}\(\delta\(1+\log\frac1{|z+\xi|}\)+|\tau|\)^2\(1+\frac1{|z+\xi|^{O(\delta)}}\)\\
\nonumber&=&\frac{8\pi\varphi(\xi)}{\(\varphi(\xi)^2+K(\xi)\)^2}\delta\((-\Delta)^\frac12h(1)+\eta(-\Delta)^\frac12h'(1)\right.\\
\nonumber&-&\left.\delta\log\frac1\delta\frac{4\varphi(1)^2}{\varphi(1)^2+K(1)}\((-\Delta)^\frac12h(\xi)^2+h'(\xi)^2\)\)\\
\label{ebdr1z1}&-&2K(\xi)\int\limits_{\D^2}We^V\mathcal Z_1+O\(\delta^2+\delta|\tau|+\delta|\eta|^2+\delta^2\log\frac1\delta|\eta|+|\tau|^2\).
\end{eqnarray}
To deal with $\mathcal E^\partial_2$, we use Proposition \ref{intB} similarly as before:
\begin{eqnarray}
\!\!\!\!\!\!\!\!\!\!\nonumber\int\limits_{\partial\D^2}\mathcal E^\partial_2\mathcal Z_1&=&2\e \frac{2\varphi(\xi)}{\varphi(\xi)^2+K(\xi)}\int\limits_{\partial\D^2}(I(f(z))-I(\xi))\mathcal Z_1+2\e \frac{2\varphi(\xi)}{\varphi(\xi)^2+K(\xi)}I(\xi)\int\limits_{\partial\D^2}\mathcal Z_1\\
\!\!\!\!\!\!\!\!\!\!\nonumber&=&2\e\(\frac{4\pi\varphi(\xi)}{\(\varphi(\xi)^2+K(\xi)\)^2}\delta(-\Delta)^\frac12I(\xi)+O\(\delta^2\)\)+\frac{4\e I(\xi)\varphi(\xi)}{\(\varphi(\xi)^2+K(\xi)\)^2}\int\limits_{\partial\D^2}\langle z,\xi\rangle dz\\
\!\!\!\!\!\!\!\!\!\!\label{ebdr2z1}&=&\frac{8\pi\varphi(\xi)}{\(\varphi(\xi)^2+K(\xi)\)^2}\delta\e(-\Delta)^\frac12I(1)+O\(\delta|\e|(\delta+|\eta|)\).
\end{eqnarray}
Finally, since $\mathcal E^\partial_3$ is constant, then $\int\limits_{\partial\D^2}\mathcal E^\partial_3\mathcal Z_1=0$; therefore, the proof follows by putting together \eqref{ebdr1z1} and \eqref{ebdr2z1}.
\end{proof}\

\begin{proposition}\label{EintZ2}
\begin{eqnarray*}
\int\limits_{\D^2}\mathcal E^\mathrm{Int}\mathcal Z_2&=&\frac{4\pi}{\(\varphi(\xi)^2+K(\xi)\)^2}\delta(\partial_2K(1)+\eta\partial_{22}K(1)+\e\partial_2G(1))+2K(\xi)\int\limits_{\D^2}We^V\mathcal Z_2\\
&+&O\(\frac{|\e|^2}{\log^2\frac1{|\e|}}\).
\end{eqnarray*}
\end{proposition}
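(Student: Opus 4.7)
The proof will mirror the strategy used for Proposition \ref{EintZ1}, with the same splitting $\mathcal E^\mathrm{Int}=\mathcal E^\mathrm{Int}_1+\mathcal E^\mathrm{Int}_2+\mathcal E^\mathrm{Int}_3$ as in \eqref{split1} and the same Taylor expansion and change-of-variables techniques; only the symmetry considerations against $\mathcal Z_2$ (rather than $\mathcal Z_1$) will be different, and these differences are exactly what removes the $\Delta K(1)\delta\log\frac1\delta$ contribution from the final formula.

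First, for $\mathcal E^\mathrm{Int}_1$, using the decomposition \eqref{eint1} and the fact that $V$ is radial (so $\int_{\D^2}e^V\mathcal Z_2=0$ by the angular symmetry of $\mathcal Z_2$), I obtain
$$\int_{\D^2}\mathcal E^\mathrm{Int}_1\mathcal Z_2=2K(\xi)\int_{\D^2}We^V\mathcal Z_2+O\bigl(\delta^2+\delta|\tau|+|\tau|^2\bigr),$$
exactly as in \eqref{eint1z1} but with $\mathcal Z_1$ replaced by $\mathcal Z_2$. For $\mathcal E^\mathrm{Int}_3$, the product $e^V\mathcal Z_2$ is odd with respect to reflection in the $\xi$-axis, so $\int_{\D^2}\mathcal E^\mathrm{Int}_3\mathcal Z_2=0$.

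The heart of the argument is $\mathcal E^\mathrm{Int}_2$, which I expand via \eqref{eint2}. Performing the rotation $w=\overline\xi z$ and applying Proposition \ref{estKgrad} to the first-order terms, the contribution coming from $\nabla K(\xi)$ picks up the $\xi^\perp$-component of the gradient rather than the $\xi$-component:
$$\int_{\D^2}\left\langle\nabla K(\xi),\frac{z-\xi}{1+(1-\delta)\overline\xi z}\right\rangle e^V\mathcal Z_2=\frac{2\pi}{\bigl(\varphi(\xi)^2+K(\xi)\bigr)^2}\bigl\langle\nabla K(\xi),\xi^\perp\bigr\rangle+O(\delta),$$
and expanding $\xi=e^{\imath\eta}$ gives $\langle\nabla K(\xi),\xi^\perp\rangle=\partial_2K(1)+\eta\partial_{22}K(1)+O(|\eta|^2)$. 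The analogous computation for $\nabla G$ yields $\partial_2G(1)+O(\delta+|\eta|)$.

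The main observation, and the reason the statement contains no $\Delta K$ term, is the following parity argument for the quadratic term. After the same change of variables $w=\frac{1-\delta}{\delta}(\overline\xi z+1)$ that is used in the proof of Proposition \ref{EintZ1}, the integrand becomes (to leading order) a combination of the basic integrals $\int_{\Omega_\delta}\frac{w_2\,dw}{|1+w|^2}$ and $\int_{\Omega_\delta}\frac{w_2((1+w_1)^2-w_2^2)}{|1+w|^4}dw$ and $\int_{\Omega_\delta}\frac{(1+w_1)w_2^2}{|1+w|^4}dw$, all of which are $O(1)$ uniformly in $\delta$ by odd symmetry in $w_2$; no logarithmic divergence arises. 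Multiplied by the prefactor $\delta^2$ from the Taylor expansion, this term is $O(\delta^2)$, hence absorbed into the error. The cubic remainder of \eqref{eint2} contributes $O(\delta^2)$ and $O(\delta^2\log\frac1\delta|\e|)$ as before. Assembling the three pieces and using \eqref{orders} to convert $\delta,|\eta|,|\tau|$-type errors into $O\bigl(\frac{|\e|^2}{\log^2\frac1{|\e|}}\bigr)$ yields the claim. The only step I expect to require some care is verifying rigorously that no $\log\frac1\delta$ term survives in the $D^2K$ contribution, since the analogous term in Proposition \ref{EintZ1} was precisely the dominant one — the key is the odd symmetry of $\mathcal Z_2$ with respect to the axis through $\pm\xi$.
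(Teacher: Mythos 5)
Your overall strategy tracks the paper — same split $\mathcal E^\mathrm{Int}_1+\mathcal E^\mathrm{Int}_2+\mathcal E^\mathrm{Int}_3$, same treatment of $\mathcal E^\mathrm{Int}_1$ and $\mathcal E^\mathrm{Int}_3$, same first-order terms via Proposition \ref{estKgrad}. But your argument for the $D^2K$ term, the conceptually crucial step, both departs from the paper's and contains an error.

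The paper does not change variables for the Hessian term at all: it simply notes that $\mathcal Z_2=O(|z+\xi|)$ (since $\langle z,\xi^\perp\rangle=O(|z+\xi|)$), so the whole integrand is bounded pointwise by $\frac{|z+\xi|}{(\delta+|z+\xi|)^2}$, which integrates to $O(1)$ over $\D^2$; multiplied by the Taylor prefactor $\delta^2$ this gives $O(\delta^2)$, and one is done in two lines. Your change-of-variables-plus-parity argument is not self-consistent as stated. Of the three $\Omega_\delta$-integrals you list, only the first two (the $\Delta K$ and $\partial_{11}K-\partial_{22}K$ pieces) are odd in $w_2$; the third, $\int_{\Omega_\delta}\frac{(1+w_1)w_2^2}{|1+w|^4}dw$, is even in $w_2$ and is in fact $O(1/\delta)$, not $O(1)$: in polar coordinates the integrand decays only like $\cos t\sin^2 t/r$ at infinity, $\Omega_\delta=\{r\le\frac{2(1-\delta)}{\delta}\cos t\}$, and the angular average of $\cos^2 t\sin^2 t$ does not vanish. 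Your prefactor accounting is also off: once you insert $\mathcal Z_2\approx\frac{\delta w_2/(1-\delta)}{\varphi(\xi)^2+K(\xi)}$ near $z=-\xi$, the Jacobian $\sim\delta^2$ cancels the $\delta^{-2}$ from the Hessian kernel, and the Taylor $\delta^2$ together with the extra $\delta$ from $\mathcal Z_2$ gives a net prefactor $\delta^3$, not $\delta^2$. These two errors offset, $\delta^3\cdot O(1/\delta)=O(\delta^2)$, so your final conclusion is correct, but not for the reason you give. Your core intuition, that odd symmetry in $w_2$ kills $\int_{\Omega_\delta}\frac{w_2\,dw}{|1+w|^2}=0$ and hence the $\log\frac1\delta$ divergence tied to $\Delta K$, is right and worth knowing; but the paper's pointwise size estimate is both simpler and avoids the bookkeeping pitfalls you fell into, so it is the argument you should adopt here.
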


\begin{proof}
The proof is similar to  \eqref{EintZ1}, so we will be sketchy.

We write, as before, $\mathcal E^\mathrm{Int}=\mathcal E^\mathrm{Int}_1+\mathcal E^\mathrm{Int}_2+\mathcal E^\mathrm{Int}_3$. For the first term we have:
$$\int\limits_{\D^2}\mathcal E^\mathrm{Int}_1\mathcal Z_2=2K(\xi)\int\limits_{\D^2}We^V\mathcal Z_2+O\(\delta^2+|\tau|^2\).$$
The second term is split as in \eqref{eint2}; the terms with the gradient are estimated using Proposition \ref{estKgrad}:
\begin{eqnarray*}
&&\int\limits_{\D^2}\left\langle\nabla K(\xi),\frac{z-\xi}{1+(1-\delta)\overline\xi z}\right\rangle e^V\mathcal Z_2\\&=&\langle\nabla K(\xi),\xi\rangle\int\limits_{\D^2}\frac{(1-\delta)^2|w|^2-1+\delta w_1}{|1+(1-\delta)w|^2}\frac{4\varphi(\xi)^2}{\(\varphi(\xi)^2+K(\xi)|w|^2\)^3}w_2dw\\
&+&\left\langle\nabla K(\xi),\xi^\perp\right\rangle\int\limits_{\D^2}\frac{(2-\delta)w_2}{|1+(1-\delta)w|^2}\frac{4\varphi(\xi)^2}{\(\varphi(\xi)^2+K(\xi)|w|^2\)^3}w_2dw\\
&=&\(\frac{2\pi}{\(\varphi(\xi)^2+K(\xi)\)^2}+O(\delta)\)\left\langle\nabla K(\xi),\xi^\perp\right\rangle\\
&=&\frac{2\pi}{\(\varphi(\xi)^2+K(\xi)\)^2}(\partial_2K(1)+\eta\partial_{22}K(1))+O\(\delta+|\eta|^2\),
\end{eqnarray*}
and similarly
$$\int\limits_{\D^2}\langle\nabla G(\xi),f(z)-\xi\rangle e^V\mathcal Z_2=\frac{2\pi}{\(\varphi(\xi)^2+K(\xi)\)^2}\partial_2G(1)+O(\delta+|\eta|).$$
The term with the second derivatives this time is negligible because $\left\langle z,\xi^\perp\right\rangle=O(|z+\xi|)$, hence
\begin{eqnarray*}
\int\limits_{\D^2}\left\langle D^2K(\xi)\frac{z-\xi}{1+(1-\delta)\overline\xi z},\frac{z-\xi}{1+(1-\delta)\overline\xi z}\right\rangle e^V\mathcal Z_2
&=&\int\limits_{\D^2}O\(\left|\frac{z-\xi}{1+(1-\delta)\overline\xi z}\right|^2\left|\left\langle z,\xi^\perp\right\rangle\right|\)\\
&=&\int\limits_{\D^2}\frac{|z+\xi|}{(|z+\xi|+\delta)^2}\\
&=&O(1);
\end{eqnarray*}
therefore,
$$\!\!\!\!\!\!\!\!\!\!\int\limits_{\D^2}\mathcal E^\mathrm{Int}_2\mathcal Z_2=\frac{4\pi}{\(\varphi(\xi)^2+K(\xi)\)^2}\delta\(\partial_2K(1)+\eta\partial_{22}K(1)+\e\partial_2G(1)\)+O\(\delta\(\delta+|\eta|^2+|\eta||\e|\)\).$$
Finally, $\int\limits_{\D^2}\mathcal E^\mathrm{Int}_3\mathcal Z_2=0$ again by symmetry reasons, which concludes the proof.
\end{proof}\

\begin{proposition}\label{EbdrZ2}
\begin{eqnarray*}
\int\limits_{\partial\D^2}\mathcal E^\partial\mathcal Z_2&=&\frac{8\pi\varphi(\xi)}{\(\varphi(\xi)^2+K(\xi)\)^2}\delta(h'(1)+\eta h''(1)+\e I'(1))-2K(\xi)\int\limits_{\D^2}We^V\mathcal Z_2\\
&+&O\(\frac{|\e|^2}{\log^2\frac1{|\e|}}\).
\end{eqnarray*}
\end{proposition}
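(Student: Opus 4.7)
The strategy is to mirror the proof of Proposition \ref{EbdrZ1}, splitting $\mathcal E^\partial=\mathcal E^\partial_1+\mathcal E^\partial_2+\mathcal E^\partial_3$ as in \eqref{split2} and exploiting throughout the odd/even dichotomy under the reflection $w\mapsto\xi^2\overline w$: $\mathcal Z_2$ is odd with respect to this reflection, whereas $\mathcal Z_1$ is even. Since $\mathcal Z_2(z)=\frac{\langle z,\xi^\perp\rangle}{\varphi(\xi)^2+K(\xi)}$ on $\partial\D^2$, the ``tangential'' version of Proposition \ref{intB} applies and produces the tangential derivative $h'(\xi)$ in place of the fractional Laplacian $(-\Delta)^{1/2}h(\xi)$ that appeared in Proposition \ref{EbdrZ1}.

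For the first piece $\mathcal E^\partial_1$, I would use the Taylor expansion \eqref{ebdr1} and reduce the main contribution to the boundary integrals $\int_{\partial\D^2}(h(f(z))-h(\xi))\mathcal Z_2$ and $\int_{\partial\D^2}(h(f(z))-h(\xi))W\mathcal Z_2$. The first one, via the $\mathcal Z_2$-version of Proposition \ref{intB}, gives $\frac{4\pi\varphi(\xi)}{(\varphi(\xi)^2+K(\xi))^2}\delta\bigl(h'(1)+\eta h''(1)\bigr)+O(\delta^2)$. Exactly as in Proposition \ref{EbdrZ1}, Green's identity applied to $W$ and $\mathcal Z_2$ (using \eqref{eqw} and \eqref{eqlin}) converts the $h(\xi)W e^{V/2}$ contribution into the interior correction $-2K(\xi)\int_{\D^2}We^V\mathcal Z_2$ plus the boundary term $\frac{2\varphi(\xi)}{\varphi(\xi)^2+K(\xi)}\int_{\partial\D^2}2(h(f(z))-h(\xi))\mathcal Z_2$, which explains the announced $-2K(\xi)\int_{\D^2}We^V\mathcal Z_2$ in the formula.

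The crucial difference with Proposition \ref{EbdrZ1} is the absence here of a $\delta^2\log\frac1\delta$ term. Substituting the expansion of $W$ from Lemma \ref{w} into $\int_{\partial\D^2}(h(f(z))-h(\xi))W\mathcal Z_2$ produces two leading integrals, one involving $\log|z+\xi|$ and one involving $\arctan\frac{\langle z,\xi^\perp\rangle}{1+\langle z,\xi\rangle}$, each multiplied against $\langle z,\xi^\perp\rangle$ along $\partial\D^2$. The $\log|z+\xi|$ integral is even under the reflection $w\mapsto\xi^2\overline w$ while $\langle z,\xi^\perp\rangle$ is odd, so the contribution of the even part of $h(f(z))-h(\xi)$ vanishes, and a direct estimate shows the odd part contributes only $O(\delta^2)$; the $\arctan$ integral is $O(\delta^2)$ by an analogous parity-plus-boundedness argument. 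This is the main technical point and is where the tangential nature of $\mathcal Z_2$ is truly used. I expect this to be the principal obstacle of the proof, requiring a careful bookkeeping of parities together with the pointwise bound \eqref{wi} to control the remainder.

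For $\mathcal E^\partial_2$, the same scheme with $I$ in place of $h$ gives $\frac{8\pi\varphi(\xi)}{(\varphi(\xi)^2+K(\xi))^2}\delta\e I'(1)$, plus a remainder $O(\delta|\e|(\delta+|\eta|))$ that, in view of \eqref{orders}, is $O\bigl(\frac{|\e|^2}{\log^2\frac1{|\e|}}\bigr)$. Note that the term $\frac{4\e I(\xi)\varphi(\xi)}{(\varphi(\xi)^2+K(\xi))^2}\int_{\partial\D^2}\langle z,\xi^\perp\rangle\,dz$ vanishes identically, unlike in the $\mathcal Z_1$-case where the analogous integral of $\langle z,\xi\rangle$ does not. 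Finally, $\mathcal E^\partial_3$ is a constant on $\partial\D^2$, so its projection on $\mathcal Z_2$ is zero by $\int_{\partial\D^2}\langle z,\xi^\perp\rangle\,dz=0$. Assembling the three contributions and applying \eqref{orders} to absorb all remainders into $O\bigl(\frac{|\e|^2}{\log^2\frac1{|\e|}}\bigr)$ yields the claimed identity.
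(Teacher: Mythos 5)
Your proposal tracks the paper's proof closely: same splitting $\mathcal E^\partial=\mathcal E^\partial_1+\mathcal E^\partial_2+\mathcal E^\partial_3$, same use of \eqref{ebdr1}, same Green's identity to produce $-2K(\xi)\int_{\D^2}We^V\mathcal Z_2$, same appeal to Proposition \ref{intB} for the leading contribution, and the paper indeed uses Proposition \ref{logarctan}(3)--(4) for the boundary integral of $W$ against $\mathcal Z_2$, with the remaining pieces treated exactly as you describe. So the overall strategy matches.

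However, the middle paragraph --- which you yourself flag as the main technical point --- has two inaccuracies worth naming. First, the claim that the $\arctan$ integral follows by an argument ``analogous'' in parity to the $\log$ one is not correct: under $w\mapsto\xi^2\overline w$, $\log|z+\xi|$ is \emph{even}, $\arctan\frac{\langle z,\xi^\perp\rangle}{1+\langle z,\xi\rangle}$ is \emph{odd}, and $\langle z,\xi^\perp\rangle$ is odd, so the weight $\log|z+\xi|\langle z,\xi^\perp\rangle$ is odd whereas $\arctan(\cdot)\langle z,\xi^\perp\rangle$ is \emph{even}. For the $\arctan$ integral it is therefore the odd part of $h(f(z))-h(\xi)$ that cancels, and the even part that survives --- the opposite configuration from the $\log$ case. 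Second, the surviving piece in each of the two inner integrals is $O(\delta)$, not $O(\delta^2)$: the odd part of $h(f(z))-h(\xi)$ is $\delta h'(\xi)\Theta(z)+\dots$, and $\Theta(z)\langle z,\xi^\perp\rangle=O(1)$, so the integrand is $O\bigl(\delta(1+\log\tfrac1{|z+\xi|})\bigr)$ and its integral is $O(\delta)$. The needed $O(\delta^2)$ comes only after multiplying by the $\delta$-prefactor in the expansion of $W$ from Lemma \ref{w}. The paper avoids the parity bookkeeping entirely: Proposition \ref{logarctan}(3)--(4) is proved by a direct pointwise bound ($|\langle z,\xi^\perp\rangle|=O(|z+\xi|)$, $\arctan$ bounded, $|h(f(z))-h(\xi)|=O(\delta/(\delta+|z+\xi|))$), which is both simpler and gets the right order without invoking any cancellation. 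So your route works in the end, but the parity reasoning needs correcting, and a straight magnitude estimate is the cleaner (and the paper's) way to close the gap.
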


\begin{proof}
The proof is similar to Proposition \ref{EbdrZ1}.

We split $\mathcal E^\partial$ as in \eqref{ebdr}. The term with $\mathcal E^\partial_1$ is again divided as in \eqref{ebdr1} and an integration by parts gives
$$h(\xi)\frac{2\varphi(\xi)}{\varphi(\xi)^2+K(\xi)}\int\limits_{\partial\D^2}W\mathcal Z_2=2\frac{2\varphi(\xi)}{\varphi(\xi)^2+K(\xi)}\int\limits_{\partial\D^2}(h(f(z))-h(\xi))\mathcal Z_2-2K(\xi)\int\limits_{\D^2}We^V\mathcal Z_2;$$
in turn, the first term can be estimated via Proposition \ref{intB}:
\begin{eqnarray*}
\frac{2\varphi(\xi)}{\varphi(\xi)^2+K(\xi)}\int\limits_{\partial\D^2}(h(f(z))-h(\xi))\mathcal Z_2
&=&\frac{4\pi\varphi(\xi)}{\(\varphi(\xi)^2+K(\xi)\)^2}\delta h'(\xi)+O\(\delta^2\)\\
&=&\frac{4\pi\varphi(\xi)}{\(\varphi(\xi)^2+K(\xi)\)^2}\delta(h'(1)+\eta h''(1))+O\(\delta\(\delta+|\eta|^2\)\).
\end{eqnarray*}
The term containing $W$ is again estimated using   \eqref{w} and Proposition \ref{logarctan}:
\begin{eqnarray*}
\frac{2\varphi(\xi)}{\varphi(\xi)^2+K(\xi)}\int\limits_{\partial\D^2}(h(f(z))-h(\xi))W\mathcal Z_2&=&O\(\delta^2\)+O\(\int\limits_{\partial\D^2}\frac{\delta^3}{\(\delta+|z+\xi|\)^2}\(1+\left|\log\frac{|z+\xi|}\delta\right|\)dz\)\\
&=&O\(\delta^2\);
\end{eqnarray*}
therefore,
\begin{eqnarray*}
\int\limits_{\partial\D^2}\mathcal E^\partial_1\mathcal Z_2&=&\frac{8\pi\varphi(\xi)}{\(\varphi(\xi)^2+K(\xi)\)^2}\delta(h'(1)+\eta h''(1))-2K(\xi)\int\limits_{\D^2}We^V\mathcal Z_2+O\(\delta^2+\delta|\tau|+\delta|\eta|^2+|\tau|^2\).
\end{eqnarray*}
Using similarly Proposition \ref{intB} we get:
$$\int\limits_{\partial\D^2}\mathcal E^\partial_2\mathcal Z_2=\frac{8\pi\varphi(\xi)}{\(\varphi(\xi)^2+K(\xi)\)^2}\delta\e I'(1)+O\(\delta|\e|(\delta+|\eta|)\).$$
Finally, $\int\limits_{\partial\D^2}\mathcal E^\partial_3\mathcal Z_2=0$ and the proof is concluded.
\end{proof}\

\begin{proposition}\label{eintz0}
$$\int\limits_{\D^2}\mathcal E^\mathrm{Int}=\frac{8\pi}{\varphi(\xi)^2+K(\xi)}\(-\delta\partial_1K(1)+K(1)\tau\)+O\(\frac{|\e|^2}{\log\frac1{|\e|}}\).$$
\end{proposition}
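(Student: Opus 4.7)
The plan is to decompose $\mathcal E^{\mathrm{Int}}=\mathcal E^{\mathrm{Int}}_1+\mathcal E^{\mathrm{Int}}_2+\mathcal E^{\mathrm{Int}}_3$ exactly as in \eqref{split1}, extract the $\tau$-contribution from $\int\mathcal E^{\mathrm{Int}}_1$ and the $\nabla K$-contribution from $\int\mathcal E^{\mathrm{Int}}_2$, and show that everything else is absorbed in $O(|\e|^2/\log(1/|\e|))$. A key auxiliary fact is the explicit value
$$\int_{\D^2}e^{V_\xi}\,dz=\frac{4\pi}{\varphi(\xi)^2+K(\xi)},$$
obtained by direct radial integration.

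For $\mathcal E^{\mathrm{Int}}_1$ I would proceed as in \eqref{eint1}: writing $e^{W+\tau}-1=(W+\tau)+O((W+\tau)^2e^{|W+\tau|})$ and $K(f(z))+\e G(f(z))=K(\xi)+O(|f(z)-\xi|+|\e|)$, the main contribution reduces to $2K(\xi)\int_{\D^2}(W+\tau)e^V\,dz$. The $\tau$-piece yields $\frac{8\pi K(\xi)\tau}{\varphi(\xi)^2+K(\xi)}$, which (since $K(\xi)=K(1)+O(|\eta|)$ and $\tau=O(|\e|/\log(1/|\e|))$) matches the second term in the statement up to the allowed error. The delicate point is $\int_{\D^2}We^V\,dz$: a priori this looks like $O(\delta)$, same order as the main term. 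Using Lemma \ref{w}, the leading part of $W$ is a multiple of $(-\Delta)^{1/2}h(\xi)\log|z+\xi|+h'(\xi)\arctan\frac{\langle z,\xi^\perp\rangle}{1+\langle z,\xi\rangle}$. The arctan integral vanishes by the oddness in the $\xi^\perp$-direction against the radial density $e^V$. The log integral vanishes too, because after rotating $w=\overline\xi z$ we have $\log|z+\xi|=\log|1+w|$, which is harmonic in $\D^2$; the mean value property kills the angular integral, leaving $0$. The $O\!\(\frac{\delta^2}{\delta+|z+\xi|}(1+|\log(|z+\xi|/\delta)|)\)$ remainder from Lemma \ref{w} integrates to $O(\delta^2\log(1/\delta))=O(|\e|^2/\log(1/|\e|))$, and the Taylor cross terms $(K(f)-K(\xi))(W+\tau)e^V$ and the quadratic $(W+\tau)^2$ part are bounded by the same order using $|f-\xi|\le\frac{\delta|z-\xi|}{|1+(1-\delta)\overline\xi z|}$ and the pointwise bound \eqref{wi}.

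For $\mathcal E^{\mathrm{Int}}_2$ I would use the Taylor expansion \eqref{eint2}. The key new computation, compared to Proposition \ref{EintZ1}, is the unweighted integral
$$\int_{\D^2}\frac{z-\xi}{1+(1-\delta)\overline\xi z}\,e^V\,dz.$$
Rotating by $w=\overline\xi z$, this equals $\xi\int_{\D^2}\frac{w-1}{1+(1-\delta)w}e^V(|w|)\,dw$. Decomposing $\frac{w-1}{1+(1-\delta)w}=\frac1{1-\delta}-\frac{2-\delta}{(1-\delta)(1+(1-\delta)w)}$ and noting that both $1$ and $\frac1{1+(1-\delta)w}$ are holomorphic in $\D^2$ with value $1$ at the origin, the mean value property gives $\int_{\D^2}\frac{e^V}{1+(1-\delta)w}dw=\int_{\D^2}e^V\,dw=\frac{4\pi}{\varphi^2+K}$, and hence the integral is exactly $-\frac{4\pi\xi}{\varphi(\xi)^2+K(\xi)}$. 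Pairing against $\nabla K(\xi)$ gives $-\frac{8\pi\delta}{\varphi^2+K}\langle\nabla K(\xi),\xi\rangle=-\frac{8\pi\delta\partial_1K(1)}{\varphi(\xi)^2+K(\xi)}+O(\delta|\eta|)$, which is the first term in the statement. The $D^2K$ contribution is $O(\delta^2\log(1/\delta))$ (bounding $|f-\xi|^2$ in $L^1$ as in the proof of Proposition \ref{EintZ1}), the $\e\nabla G$ contribution is $O(|\e|\delta)$, and the cubic Taylor remainder is $O(\delta^2)$; all lie within $O(|\e|^2/\log(1/|\e|))$.

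Finally, $\mathcal E^{\mathrm{Int}}_3=2\e G(\xi)e^V$ integrates to $\frac{8\pi\e G(\xi)}{\varphi^2+K}$, which by \eqref{gi1} is $O(|\e||\eta|)=O(|\e|^2)$. Summing the three contributions yields the claimed estimate. The main obstacle is justifying that $\int We^V\,dz$ does not pollute the main term; the mean value argument above is what makes the whole computation close, and without it the $(-\Delta)^{1/2}h(\xi)$ piece from $W$ would appear at order $\delta$ on the right-hand side.
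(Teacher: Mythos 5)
Your proof is correct, uses the same decomposition $\mathcal E^{\mathrm{Int}}=\mathcal E^{\mathrm{Int}}_1+\mathcal E^{\mathrm{Int}}_2+\mathcal E^{\mathrm{Int}}_3$ from \eqref{split1}, and isolates the same two dangerous integrals, but it handles them by a genuinely different technique: the mean value property for harmonic functions against the radial weight $e^V$, rather than the paper's integration-by-parts arguments. For the gradient contribution from $\mathcal E^{\mathrm{Int}}_2$, your partial-fraction split $\frac{w-1}{1+(1-\delta)w}=\frac1{1-\delta}-\frac{2-\delta}{(1-\delta)(1+(1-\delta)w)}$ plus the mean value property for the holomorphic function $(1+(1-\delta)w)^{-1}$ gives $\int_{\D^2}\frac{z-\xi}{1+(1-\delta)\overline\xi z}e^V\,dz=-\frac{4\pi\xi}{\varphi^2+K}$ exactly and in one stroke; the paper instead proves this as two separate real identities by brute-force polar coordinates in Proposition \ref{wZ0}, so your route here is shorter and more conceptual. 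For $\int_{\D^2}We^V$, the paper gets $0$ exactly by Green's second identity applied to $V$ and the harmonic $W$ together with $\int_{\partial\D^2}W=0$ (via Fubini and $\int_{\partial\D^2}\log|z-w|\,dz=0$); you instead expand $W$ via Lemma \ref{w}, kill the $\arctan$ piece by oddness and the $\log|z+\xi|$ piece because $\log|1+w|$ is harmonic with value $0$ at the origin, and bound the remainder by $O(\delta^2\log(1/\delta))$. Both are valid here, but the paper's argument is structurally cleaner (it only uses the PDE $W$ solves, not its explicit expansion, and gives exact vanishing), and that exact vanishing is also what is exploited verbatim in Propositions \ref{EintZ1}--\ref{EbdrZ2}, so it is the more portable fact. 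Two tiny remarks: first, you correctly flag $\int We^V$ as the delicate term of the same nominal order $\delta$ as the main terms -- this is indeed the crux. Second, $\int_{\D^2}\mathcal E^{\mathrm{Int}}_3=\frac{8\pi\e G(\xi)}{\varphi^2+K}=O(|\e|\,|\eta|)=O(|\e|^2)$ is strictly speaking \emph{not} $O(|\e|^2/\log(1/|\e|))$; this discrepancy is already present in the paper's own proof and is harmless for the reduction argument of Section \ref{6} (any $o(|\e|/\log(1/|\e|))$ error suffices), but it is worth being aware that the stated error exponent is slightly optimistic.
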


\begin{proof}
We split $\mathcal E^\mathrm{Int}$ as in \eqref{split1}, and $\mathcal E^\mathrm{Int}_1$ again following \eqref{eint1}. To deal with the term with $W$ we integrate by parts and then use the fact that $V$ is constant along $\partial\D^2$ and Proposition \ref{estZ01}:
\begin{eqnarray*}
2K(\xi)\int\limits_{\D^2}We^V&=&-\int\limits_{\D^2}V\Delta W+\int\limits_{\partial\D^2}(V\partial_\nu W-W\partial_\nu V)\\
&=&V(\xi)\int\limits_{\partial\D^2}\partial_\nu W-2h(\xi)\frac{2\varphi(\xi)}{\varphi(\xi)^2+K(\xi)}\int\limits_{\partial\D^2}W\\
&=&\frac{2\varphi(\xi)h(\xi)}{\varphi(\xi)^2+K(\xi)}\int\limits_{\partial\D^2}\(-\frac2\pi \frac{2\varphi(\xi)}{\varphi(\xi)^2+K(\xi)}\int\limits_{\partial\D^2}\log|z-w|(h(f(w))-h(\xi))dw\)dz\\
&=&-\frac{8\varphi(\xi)^2h(\xi)}{\pi\(\varphi(\xi)^2+K(\xi)\)^2}\int\limits_{\partial\D^2}(h(f(w))-h(\xi))\(\int\limits_{\partial\D^2}\log|z-w|dz\)dw\\
&=&0.
\end{eqnarray*}
Applying Proposition \ref{estZ01} again gives
$$2K(\xi)\tau \int\limits_{\D^2}e^V=\frac{8\pi K(\xi)}{\varphi(\xi)^2+K(\xi)}\tau=\frac{8\pi K(1)}{\varphi(\xi)^2+K(\xi)}\tau+O(|\eta||\tau|),$$
therefore
\begin{eqnarray}
\nonumber\int\limits_{\D^2}\mathcal E^\mathrm{Int}_1&=&2K(\xi)\int\limits_{\D^2}We^V+2K(\xi)\tau \int\limits_{\D^2}e^V+\int\limits_{\D^2}O\(|f(z)-\xi|(|W|+|\tau|)+(|W|+|\tau|)^2\(1+e^{W+\tau}\)\)\\
\label{eint1z0}&=&\frac{8\pi K(1)}{\varphi(\xi)^2+K(\xi)}\tau+O\(\delta^2+|\tau|^2\).
\end{eqnarray}
Concerning the second term, we write
\begin{eqnarray*}
\mathcal E^\mathrm{Int}_2&=&2\left\langle\nabla K(\xi),f(z)-\xi\right\rangle e^V+O\(|f(z)-\xi|^2+|\e||f(z)-\xi|\)\\
&=&2\delta\left\langle\nabla K(\xi),\frac{z-\xi}{1+(1-\delta)\overline\xi z}\right\rangle e^V+O\(\frac{\delta^2}{(\delta+|z+\xi|)^2}+\frac{|\e|\delta}{\delta+|z+\xi|}\);
\end{eqnarray*}
then, we argue as in the proof of \eqref{EintZ1}, \eqref{EintZ2} and apply Proposition \ref{wZ0}:
\begin{eqnarray*}
\int\limits_{\D^2}\left\langle\nabla K(\xi),\frac{z-\xi}{1+(1-\delta)\overline\xi z}\right\rangle e^V
&=&\langle\nabla K(\xi),\xi\rangle\int\limits_{\D^2}\frac{(1-\delta)^2|w|^2-1+\delta w_1}{|1+(1-\delta)w|^2}\frac{4\varphi(\xi)^2}{\(\varphi(\xi)^2+K(\xi)|w|^2\)^2}dw\\
&+&\left\langle\nabla K(\xi),\xi^\perp\right\rangle\int\limits_{\D^2}\frac{(2-\delta)w_2}{|1+(1-\delta)w|^2}\frac{4\varphi(\xi)^2}{\(\varphi(\xi)^2+K(\xi)|w|^2\)^2}dw\\
&=&-\frac{4\pi}{\varphi(\xi)^2+K(\xi)}\langle\nabla K(\xi),\xi\rangle\\
&=&-\frac{4\pi}{\varphi(\xi)^2+K(\xi)}\partial_1K(1)+O(|\eta|).
\end{eqnarray*}
Therefore,
\begin{eqnarray}
\!\!\!\!\!\!\!\!\!\!\nonumber\int\limits_{\D^2}\mathcal E^\mathrm{Int}_2&=&2\delta\int\limits_{\D^2}\left\langle\nabla K(\xi),\frac{z-\xi}{1+(1-\delta)\overline\xi z}\right\rangle e^V+\int\limits_{\D^2}O\(\frac{\delta^2}{(\delta+|z+\xi|)^2}+\frac{\e\delta}{\delta+|z+\xi|}\)\\
\!\!\!\!\!\!\!\!\!\!\label{eint2z0}&=&-\frac{8\pi\partial_1 K(1)}{\varphi(\xi)^2+K(\xi)}\delta+O\(\delta\(\delta\log\frac1\delta+|\eta|+|\e|\)\).
\end{eqnarray}
Finally, since $G(\xi)=O(|\eta|)$, then
$$\int\limits_{\D^2}\mathcal E^\mathrm{Int}_3=O(|\eta||\e|),$$
and the result follows by \eqref{eint1z0} and \eqref{eint2z0}.
\end{proof}\

\begin{proposition}\label{ebdz0}
$$\int\limits_{\partial\D^2}\mathcal E^\partial=\frac{4\pi\varphi(\xi)}{\varphi(\xi)^2+K(\xi)}\(-2\delta(-\Delta)^\frac12h(1)+\tau h(1)\)+O\(\frac{|\e|^2}{\log\frac1{|\e|}}\).$$
\end{proposition}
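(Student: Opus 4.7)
The plan is to follow the same strategy used to prove Proposition \ref{eintz0}, splitting $\mathcal{E}^\partial=\mathcal{E}^\partial_1+\mathcal{E}^\partial_2+\mathcal{E}^\partial_3$ according to \eqref{split2} and handling the three pieces separately. The main simplification comes from the fact that $e^{V/2}=\frac{2\varphi(\xi)}{\varphi(\xi)^2+K(\xi)}$ is constant on $\partial\D^2$, so several boundary integrals factor cleanly.

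For $\mathcal{E}^\partial_1=2h(f(z))\(e^{(W+\tau)/2}-1\)e^{V/2}$, I would perform the same Taylor expansion as in \eqref{ebdr1}, namely
\[
\mathcal{E}^\partial_1=h(\xi)(W+\tau)e^{V/2}+(h(f(z))-h(\xi))(W+\tau)e^{V/2}+O\!\(\(|W|+|\tau|\)^2\(1+e^{W+\tau}\)\),
\]
and integrate. The crucial observation, which plays the same role as the integration-by-parts identity $2K(\xi)\int_{\D^2}We^V=0$ in Proposition \ref{eintz0}, is that $\int_{\partial\D^2}W=0$. Indeed, $W$ is harmonic in $\D^2$ because the kernel $\log|z-w|$ in \eqref{Wbox} is harmonic in $z$ for $w\in\partial\D^2$; by the mean value property $\int_{\partial\D^2}W=2\pi W(0)$, and $W(0)=0$ since $\log|0-w|=0$ on $\partial\D^2$. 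This makes the leading contribution collapse to $h(\xi)\cdot\frac{2\varphi(\xi)}{\varphi(\xi)^2+K(\xi)}\cdot 2\pi\tau=\frac{4\pi\varphi(\xi)h(1)\tau}{\varphi(\xi)^2+K(\xi)}+O(|\eta||\tau|)$. The remaining cross term is bounded using $|h(f(z))-h(\xi)|=O(|f(z)-\xi|)$ together with the pointwise estimate \eqref{wi} and Proposition \ref{f}, producing an $O(\delta^2\log\tfrac1\delta+\delta|\tau|)$ contribution; the quadratic remainder is $O(\delta^2+\tau^2)$ via Lemma \ref{w} and Lemma \ref{mt}.

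For $\mathcal{E}^\partial_2=2\e I(f(z))e^{(V+W+\tau)/2}$, I would exploit $I(1)=0$ to write $I(\xi)=O(|\eta|)$ and $I(f(z))=I(\xi)+O(|f(z)-\xi|)$. Using again the constancy of $e^{V/2}$ on $\partial\D^2$ and $e^{(W+\tau)/2}=1+O(W+\tau)$, plus the analog of Proposition \ref{intFracLap} applied to $I$, one obtains $\int_{\partial\D^2}\mathcal{E}^\partial_2=O\(|\e|(|\eta|+\delta)\)$, which under \eqref{orders} is absorbed into the claimed error. For $\mathcal{E}^\partial_3=\frac1\pi e^{V/2}\int_{\partial\D^2}(h(f(w))-h(\xi))dw$, which is constant in $z$, we simply have
\[
\int_{\partial\D^2}\mathcal{E}^\partial_3=2e^{V/2}\big|_{\partial\D^2}\int_{\partial\D^2}(h(f(w))-h(\xi))dw=\frac{4\varphi(\xi)}{\varphi(\xi)^2+K(\xi)}\(-2\pi\delta(-\Delta)^{\frac12}h(\xi)+O(\delta^2)\),
\]
by Proposition \ref{intFracLap}, which after expanding $(-\Delta)^{\frac12}h(\xi)=(-\Delta)^{\frac12}h(1)+O(|\eta|)$ gives $-\frac{8\pi\varphi(\xi)\delta(-\Delta)^{\frac12}h(1)}{\varphi(\xi)^2+K(\xi)}+O(\delta(\delta+|\eta|))$. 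Summing the three contributions produces the stated formula.

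The main step is to identify the two leading terms $\tau h(1)$ and $-2\delta(-\Delta)^{\frac12}h(1)$ and recognize that, thanks to the vanishing $\int_{\partial\D^2}W=0$, they arise respectively from $\mathcal{E}^\partial_1$ and $\mathcal{E}^\partial_3$, while $\mathcal{E}^\partial_2$ contributes only to the error. All bookkeeping of error terms is routine and mirrors the computations carried out in Propositions \ref{EbdrZ1}--\ref{EbdrZ2} and \ref{eintz0}, so no further difficulty is expected.
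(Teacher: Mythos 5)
Your proof is correct and follows essentially the same route as the paper: split $\mathcal E^\partial=\mathcal E^\partial_1+\mathcal E^\partial_2+\mathcal E^\partial_3$, extract $\tau h(1)$ from $\mathcal E^\partial_1$ after killing the $h(\xi)W$ term via $\int_{\partial\D^2}W=0$, bound $\mathcal E^\partial_2$ crudely, and read off $-2\delta(-\Delta)^{1/2}h(1)$ from $\mathcal E^\partial_3$ with Proposition \ref{intFracLap}. The one cosmetic difference is in proving $\int_{\partial\D^2}W=0$: you invoke harmonicity and the mean value property ($W(0)=0$ because $\log|w|=0$ on $\partial\D^2$), while the paper swaps the order of integration and appeals to Proposition \ref{estZ01}(2), $\int_{\partial\D^2}\log|z-w|\,dz=0$ — two phrasings of the same fact.
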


\begin{proof}
We split again as in \eqref{split2}, and we write
\begin{eqnarray*}
\!\!\!\!\!\!\!\!\!\!\mathcal E^\partial_1&=&\frac{h(\xi)2\varphi(\xi)}{\varphi(\xi)^2+K(\xi)}W+\frac{h(\xi)2\varphi(\xi)}{\varphi(\xi)^2+K(\xi)}\tau \\
&+&O(|f(z)-\xi|(|W|+\tau))+O\((|W|+|\tau|)^2\(1+e^{W+\tau}\)\).
\end{eqnarray*}
The integral with the first term vanishes because, as in the proof of Proposition \ref{eintz0},
$$\!\!\!\!\!\!\!\!\!\!\frac{h(\xi)2\varphi(\xi)}{\varphi(\xi)^2+K(\xi)}\int\limits_{\partial\D^2}W=-\frac{4\varphi(\xi)^2h(\xi)}{\pi\(\varphi(\xi)^2+K(\xi)\)^2}\int\limits_{\partial\D^2}(h(f(w))-h(\xi))\(\int\limits_{\partial\D^2}\log|z-w|dz\)dw=0.$$
On the other hand, the second term is constant, 
$$\frac{h(\xi)2\varphi(\xi)}{\varphi(\xi)^2+K(\xi)}\tau \int\limits_{\partial\D^2}1=\frac{4\pi\varphi(\xi)}{\varphi(\xi)^2+K(\xi)}\tau h(\xi)=\frac{4\pi\varphi(\xi)}{\varphi(\xi)^2+K(\xi)}\tau h(1)+O(|\eta||\tau|);$$
and therefore,
\begin{eqnarray*}
\int\limits_{\partial\D^2}\mathcal E^\partial_1&=&\frac{4\pi\varphi(\xi)}{\varphi(\xi)^2+K(\xi)}\tau h(1)+O(|\eta||\tau|)+\int\limits_{\partial\D^2}\frac\delta{\delta+|z+\xi|}\delta\(1+\log\frac1{|z+\xi|}\)\\
&+&\int\limits_{\partial\D^2}\(\delta\(1+\log\frac1{|z+\xi|}\)+|\tau|\)^2\(1+\frac1{|z+\xi|^{O(\delta)}}\)\\
&=&\frac{4\pi\varphi(\xi)}{\varphi(\xi)^2+K(\xi)}\tau h(1)+O\(\delta^2\log^2\frac1\delta+|\eta||\tau|+|\tau|^2\).
\end{eqnarray*}
To estimate $\mathcal E^\partial_2$, \eqref{gi1} yields $I(f(z))=O(|f(z)-\xi|+|\eta|)$, hence
$$\int\limits_{\partial\D^2}\mathcal E^\partial_2=\int\limits_{\partial\D^2}O\(|\e|(|f(z)-\xi|+|\eta|)e^W\)=O\(\e\(\delta\log\frac1\delta+|\eta|\)\).$$
Finally, since $\mathcal E^\partial_3$ is constant, we can use Proposition \ref{intFracLap} to conclude the proof:
\begin{eqnarray*}
\int\limits_{\partial\D^2}\mathcal E^\partial_3&=&-\frac{8\pi\varphi(\xi)}{\varphi(\xi)^2+K(\xi)}\delta (-\Delta)^\frac12h(\xi)+O\(\delta^2\)\\
&=&-\frac{8\pi\varphi(\xi)}{\varphi(\xi)^2+K(\xi)}\delta (-\Delta)^\frac12h(1)+O(\delta(\delta+|\eta|)).
\end{eqnarray*}
\end{proof}\

We can finally write the exact expressions of the constants $c_0,c_1$ and $c_2$.

\begin{corollary}\label{mainorder}
The constants $c_0,c_1,c_2$ in problem \eqref{projProb} satisfy:
\begin{eqnarray*}
c_1&=&-A_0\frac4{\(\varphi(1)^2+K(1)\)^2}\delta\(\eta\underbrace{\(\partial_{12}K(1)+2\varphi(1)(-\Delta)^\frac12h'(1)\)}_{=:\mathfrak a_{11}}+\e\underbrace{\(\partial_1G(1)+2\varphi(1)(-\Delta)^\frac12I(1)\)}_{=:\mathfrak b_1}\right.\\
&+&\left.\delta\log\frac1\delta\underbrace{\(-\frac{2\varphi(1)^2}{\varphi(1)^2+K(1)}\(\Delta K(1)+4|\nabla H(1)|^2\)\)}_{=:\mathfrak a_{12}}\)+O\(\frac{|\e|^2}{\log^2\frac1{|\e|}}+\|\phi\|^2e^{O\(\|\phi\|^2\)}\),\\
c_2&=&-A_0\frac4{\(\varphi(1)^2+K(1)\)^2}\delta\(\eta\underbrace{\(\partial_{22}K(1)+2\varphi(1)h''(1)\)}_{=:\mathfrak a_{21}}+\e\underbrace{\(\partial_2G(1)+2\varphi(1)I'(1)\)}_{=:\mathfrak b_2}\)\\
&+&O\(\frac{|\e|^2}{\log^2\frac1{|\e|}}+\|\phi\|^2e^{O\(\|\phi\|^2\)}\),\\
c_0&=&-\frac1{3\pi}\frac4{\varphi(1)^2+K(1)}\(\delta\underbrace{\(-\(2\partial_1K(1)+2\varphi(1)(-\Delta)^\frac12h(1)\)\)}_{=:\mathfrak a_{32}}+\tau\underbrace{(2K(1)+\varphi(1)h(1))}_{=:\mathfrak a_{33}}\)\\
&+&O\(\frac{|\e|^2}{\log\frac1{|\e|}}+\|\phi\|^2e^{O\(\|\phi\|^2\)}\),
\end{eqnarray*}
where
$$A_0:=\frac{3\varphi(1)^2\(\varphi(1)^2+K(1)\)^3}{\(3\varphi(1)^4+3\varphi(1)^2K(1)+2K(1)^2\)}.$$
\end{corollary}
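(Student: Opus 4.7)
The plan is to assemble the constants $c_0,c_1,c_2$ from the integral identities obtained by testing \eqref{projProb} against $1$, $\mathcal Z_1$ and $\mathcal Z_2$, which have already been written out explicitly just before the corollary statement. For $c_1$ I would start from
\begin{eqnarray*}
c_1\(2K(\xi)\int\limits_{\D^2}e^V\mathcal Z_1^2+\frac{h(\xi)2\varphi(\xi)}{\varphi(\xi)^2+K(\xi)}\int\limits_{\partial\D^2}\mathcal Z_1^2\)
&=&-\int\limits_{\D^2}\mathcal E^\mathrm{Int}\mathcal Z_1-\int\limits_{\partial\D^2}\mathcal E^\partial\mathcal Z_1\\
&&-\int\limits_{\D^2}\mathcal L^\mathrm{Int}\phi\mathcal Z_1-\int\limits_{\partial\D^2}\mathcal L^\partial\phi\mathcal Z_1\\
&&-\int\limits_{\D^2}\mathcal N^\mathrm{Int}(\phi)\mathcal Z_1-\int\limits_{\partial\D^2}\mathcal N^\partial(\phi)\mathcal Z_1,
\end{eqnarray*}
and similarly for $c_2$ and $c_0$. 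The contributions from $\mathcal L\phi$ and $\mathcal N(\phi)$ are handled by pairing Lemmas \ref{l} and \ref{n} with the uniform $L^\infty$ bound on $\mathcal Z_1,\mathcal Z_2$ and with the size estimate $\|\phi\|=O(|\e|/\log(1/|\e|))$ from Proposition \ref{contr}: this yields $O(|\e|^2/\log^2(1/|\e|))+O(\|\phi\|^2e^{O(\|\phi\|^2)})$, which is exactly the admissible remainder in the statement.

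The heart of the computation is the error part. For $c_1$ I would sum Propositions \ref{EintZ1} and \ref{EbdrZ1}: the key observation is that the nonlocal terms $\pm 2K(\xi)\int_{\D^2}We^V\mathcal Z_1$ appearing in the two propositions cancel exactly, leaving only explicit expansions in $\delta,\eta,\e,\delta\log(1/\delta)$. The constant-in-$(\eta,\e)$ piece of the sum is proportional to $\partial_1K(1)+2\varphi(1)(-\Delta)^{1/2}h(1)$, which vanishes by the critical-point identity \eqref{critico}; what survives regroups precisely as $\eta\,\mathfrak a_{11}+\e\,\mathfrak b_1+\delta\log(1/\delta)\,\mathfrak a_{12}$ after using the identification $|\nabla H(1)|^2=((-\Delta)^{1/2}h(1))^2+(h'(1))^2$ (normal and tangential derivatives of the harmonic extension at $\xi=1$). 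The analogous sum for $c_2$ uses Propositions \ref{EintZ2} and \ref{EbdrZ2}, the same cancellation of $W e^V\mathcal Z_2$, and the second identity in \eqref{critico} to kill the leading $\partial_2K(1)+2\varphi(1)h'(1)$ piece, leaving $\eta\,\mathfrak a_{21}+\e\,\mathfrak b_2$. For $c_0$ the formula $3\pi c_0=-\int\mathcal E^\mathrm{Int}-\int\mathcal E^\partial+\ldots$ combined with Propositions \ref{eintz0} and \ref{ebdz0} directly produces the combination $\delta\,\mathfrak a_{32}+\tau\,\mathfrak a_{33}$, without cancellations needed.

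The remaining task is to compute the normalization factor
$$N_1:=2K(\xi)\int\limits_{\D^2}e^V\mathcal Z_1^2+\frac{h(\xi)2\varphi(\xi)}{\varphi(\xi)^2+K(\xi)}\int\limits_{\partial\D^2}\mathcal Z_1^2,$$
and its analogue $N_2$ for $\mathcal Z_2$. Using the explicit radial form of $e^V$ and $\mathcal Z_i$ in \eqref{generators} and evaluating these integrals at $\xi=1$ (errors of order $|\eta|$ are absorbed), one obtains
$$N_1=N_2=\frac{\pi\bigl(3\varphi(1)^4+3\varphi(1)^2 K(1)+2K(1)^2\bigr)}{3\varphi(1)^2\bigl(\varphi(1)^2+K(1)\bigr)^3}=\frac{\pi}{A_0},$$
so that dividing the already-assembled numerators by $N_i$ produces the factor $-A_0\cdot\tfrac{4}{(\varphi(1)^2+K(1))^2}\delta$ in front of $c_1$ and $c_2$. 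Plugging everything together yields the stated formulas.

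The main obstacle is the bookkeeping: one must carefully track which small parameters ($\delta,\eta,\tau,\e,\delta\log(1/\delta)$) multiply each coefficient to ensure every ``leftover'' term is dominated by the admissible remainder $|\e|^2/\log^2(1/|\e|)$, and in particular that using \eqref{orders} the products like $\delta|\eta|$, $\delta^2\log(1/\delta)$, $|\eta||\tau|$ and $|\tau|^2$ all sit below this threshold. The conceptual key is the cancellation of the $\int W e^V \mathcal Z_i$ terms between interior and boundary, which makes the whole scheme work; the rest is Taylor expansion in $\xi=e^{\imath\eta}$ around $1$ and the algebraic simplification forced by \eqref{critico}.
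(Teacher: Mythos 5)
Your proposal follows essentially the same route as the paper: you test the projected problem against $1,\mathcal Z_1,\mathcal Z_2$, combine Propositions \ref{EintZ1}--\ref{ebdz0} with the cancellation of the $\int W e^V\mathcal Z_i$ terms, invoke the critical-point identities \eqref{critico} (together with $|\nabla H(1)|^2=((-\Delta)^{1/2}h(1))^2+(h'(1))^2$ and $\varphi(\xi)=\varphi(1)+O(|\eta|^2)$) to kill the leading terms and regroup the coefficients, bound the $\mathcal L,\mathcal N$ contributions via Lemmas \ref{l} and \ref{n} plus Proposition \ref{contr}, and finally read the normalization factor $\pi/A_0 + O(|\eta|)$ off Proposition \ref{z1z2}. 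This is exactly what the paper does, down to the same cancellations and the same remainder bookkeeping.
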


\begin{proof}
We only prove the estimate for $c_1$, since the others are similar.

We first recall that $\xi=1$ is a critical point of $\varphi$ given in \eqref{phi}, which implies
$$\partial_1K(1)+2\varphi(1)(-\Delta)^\frac12h(1)=0,\quad\quad\quad\varphi(\xi)=\varphi(1)+O\(|\eta|^2\);$$
therefore, from \eqref{EintZ1} and \eqref{EbdrZ1} we get
\begin{eqnarray}
\nonumber&&\int\limits_{\D^2}\mathcal E^\mathrm{Int}\mathcal Z_1+\int\limits_{\partial\D^2}\mathcal E^\partial\mathcal Z_1\\
\nonumber&=&\frac{4\pi}{\(\varphi(\xi)^2+K(\xi)\)^2}\delta\(\(\partial_1K(1)+\eta\partial_{12}K(1)+\e\partial_1G(1)-\delta\log\frac1\delta\frac{2\varphi(1)^2}{\varphi(1)^2+K(1)}\Delta K(1)\)\right.\\
\nonumber&+&2\varphi(\xi)\((-\Delta)^\frac12h(1)+\eta(-\Delta)^\frac12h'(1)+\e(-\Delta)^\frac12I(1)-\left.\delta\log\frac1\delta\frac{4\varphi(1)}{\varphi(1)^2+K(1)}\((-\Delta)^\frac12h(1)^2+h'(1)^2\)\)\)\\
\nonumber&+&O\(\frac{|\e|^2}{\log\frac1{|\e|}}\)\\
\nonumber&=&\frac{4\pi}{\(\varphi(1)^2+K(1)\)^2}\delta\(\eta\(\partial_{12}K(1)+2\varphi(1)(-\Delta)^\frac12h'(1)\)+\e\(\partial_1G(1)+2\varphi(1)(-\Delta)^\frac12I(1)\)\right.\\
\label{intez1}&-&\left.\delta\log\frac1\delta\frac{2\varphi(1)}{\varphi(1)^2+K(1)}\(\Delta K(1)+|\nabla H(1)|^2\)\)+O\(\frac{|\e|^2}{\log\frac1{|\e|}}\).
\end{eqnarray}
Moreover, Lemmas \ref{l} and \ref{n} give, for $1<p<\frac43$,
\begin{eqnarray}
\nonumber&&\int\limits_{\D^2}\mathcal L^\mathrm{Int}\phi\mathcal Z_1+\int\limits_{\partial\D^2}\mathcal L^\partial\phi\mathcal Z_1+\int\limits_{\D^2}\mathcal N^\mathrm{Int}(\phi)\mathcal Z_1+\int\limits_{\partial\D^2}\mathcal N^\partial(\phi)\mathcal Z_1\\
\nonumber&=&O\(\left\|\mathcal L^\mathrm{Int}\phi\right\|_{L^p\(\D^2\)}+\left\|\mathcal L^\partial\phi\right\|_{L^p\(\partial\D^2\)}+\left\|\mathcal N^\mathrm{Int}(\phi)\right\|_{L^p\(\D^2\)}+\left\|\mathcal N^\mathrm{Int}(\phi)\right\|_{L^p\(\partial\D^2\)}\)\\
\nonumber&=&O\(\frac{|\e|}{\log\frac1{|\e|}}\|\phi\|+\|\phi\|^2e^{O\(\|\phi\|^2\)}\)\\
\label{intln}&=&O\(\frac{|\e|^2}{\log^2\frac1{|\e|}}+\|\phi\|^2e^{O\(\|\phi\|^2\)}\),
\end{eqnarray}
where in the last inequality we have applied \eqref{phio}. Finally, from Proposition \ref{z1z2} we have
$$2K(\xi)\int\limits_{\D^2}e^V\mathcal Z_1^2+\frac{h(\xi)2\varphi(\xi)}{\varphi(\xi)^2+K(\xi)}\int\limits_{\partial\D^2}\mathcal Z_1^2=\frac\pi{A_0}+O(|\eta|),$$
and the conclusion follows by putting this together with \eqref{intez1} and \eqref{intln}.
\end{proof}

\medskip

\section{The finite dimensional reduction: proof of Theorem \ref{main}}\label{6}

Take $\delta,|\eta|,|\tau|,|\e|$ small enough so that Proposition \ref{contr} can be applied to find a solution to \eqref{projProb}. If we have $c_0=c_1=c_2=0$ for some $\delta,\eta,\tau$, then $\phi$ also solves \eqref{eqphi}, hence we get a solution to \eqref{p}. From fary \ref{mainorder} and the estimates on $\|\phi\|$,
assuming that
$$\eta=s\e,\;\tau=t\e\(\log\frac1\delta\)^{-1}\;\hbox{and}\;\delta\log\frac1\delta=d\e\;\hbox{with}\;d>0\;\hbox{if}\;\e>0\;\hbox{or}\;d<0\;\hbox{if}\;\e<0,$$
we have that all the $c_i$'s are zero if
\begin{equation}\label{sis-fin}\left\{\begin{aligned}
&\mathfrak a_{11}s+\mathfrak a_{12}d+\mathfrak b_1+o_\e(1)=0,\\
&\mathfrak a_{21}s+\mathfrak b_2+o_\e(1)=0,\\
&\mathfrak a_{32}d+\mathfrak a_{33}t+o_\e(1)=0.
\end{aligned}\right.\end{equation}
The coefficients $\mathfrak a_{ij}$'s are defined in Corollary \ref{mainorder}.
System \eqref{sis-fin} can be rewritten as $\mathfrak F_\e(s,d,t)=\mathfrak F_0(s,d,t)+o_\e(1)=0$ where the function $\mathfrak F_0:\R^3\to\R^3$ is defined by
$$\mathfrak F_0(s,d,t):=\mathfrak A\(\begin{matrix}s\\d\\t\\\end{matrix}\)+\mathfrak B,\;\hbox{with}\;\mathfrak A:=\(\begin{matrix}\mathfrak a_{11}&\mathfrak a_{12}&0\\\mathfrak a_{21}&0&0\\0&\mathfrak a_{32}&\mathfrak a_{33}\\\end{matrix}\)\;\hbox{and}\;\mathfrak B:=\(\begin{matrix}\mathfrak b_1\\\mathfrak b_2\\0\\\end{matrix}\).$$
Now it is clear that if 
$$\det\mathfrak A=\det\(\begin{matrix}\mathfrak a_{11}&\mathfrak a_{12}&0\\
\mathfrak a_{21}&0&0\\0&\mathfrak a_{32}&\mathfrak a_{33}\\\end{matrix}\)\not=0,\quad\mbox{i.e.,}\quad\mathfrak a_{12}\mathfrak a_{21}\mathfrak a_{33}\not=0\;\hbox{(see \eqref{h2k}, \eqref{cond10}, \eqref{cond100} and \eqref{cond1})},$$ 
there exists a unique $(s_0,d_0,t_0)\in\R^3$ such that $\mathfrak F_0(s_0,d_0,t_0)=0$ with $d_0\not=0$ if
$$\det\(\begin{matrix}\mathfrak a_{11}&\mathfrak b_1&0\\\mathfrak a_{21}&\mathfrak b_2&0\\0&0&\mathfrak a_{33}\\\end{matrix}\)\not=0,\quad\hbox{i.e.,}\quad\mathfrak a_{33}\(\mathfrak a_{11}\mathfrak b_2-\mathfrak a_{21}\mathfrak b_1\)\not=0\;\hbox{(see \eqref{cond1000})}.$$ 
Moreover, the Brouwer degree of $\mathfrak F_\e$ is not zero and since $\mathfrak F_\e\to\mathfrak F_0$ uniformly on compact sets of $\R\times\R\setminus\{0\}\times\R$, there exists $(s_\e,d_\e,t_\e)\in\R\times\R\setminus\{0\}\times\R$ such that $\mathfrak F_\e(s_\e,d_\e,t_\e)=0$ with $(s_\e,d_\e,t_\e)\to(s_0,d_0,t_0)$ as $\e\to0$. That concludes the proof of the existence of the solution.

Thanks to the estimates on $\mathcal E,\mathcal L,\mathcal N$ and $\|\phi\|$, from \eqref{eqphi} we get $\|\Delta\phi\|_{L^p\(\D^2\)}+\left\|\partial_\nu\phi\right\|_{L^p\(\partial\D^2\)}=o_\e(1)$ for some $p>1$, therefore $\|\phi\|_{L^\infty\(\D^2\)}=o_\e(1)$. Moreover, since $V\(f^{-1}(z)\),W\(f^{-1}(z)\)$ both concentrate at $\xi=1$, we conclude that the solution
$$u=V\(f^{-1}(z)\)+W\(f^{-1}(z)\)+\tau+\phi\(f^{-1}(z)\)$$
concentrates at $\xi=1$. This finishes the proof of Theorem \ref{main}.

\medskip

\section{Appendix: some useful computations}

In this section we collect some useful computations required for the estimates in Section 2 and Section 5.

\begin{proposition}\label{f}
Let $\xi\in\partial\D^2$. For any $z\in\D^2$ one has
\begin{equation}\label{fxi}
|f(z)-\xi|=O\(\frac\delta{\delta+|z+\xi|}\),
\end{equation}
and in particular
$$\|f(z)-\xi\|_{L^p\(\D^2\)}=\begin{cases}O(\delta)&1\le p<2,\\O\(\delta\sqrt{\log\frac1\delta}\)&p=2,\\O\(\delta^\frac2p\)&p>2,\end{cases}\quad\quad\|f(z)-\xi\|_{L^p\(\partial\D^2\)}=\begin{cases}O\(\delta\log\frac1\delta\)&p=1,\\O\(\delta^\frac1p\)&p>1.\end{cases}$$
Moreover, if $z\in\partial\D^2$ and $h\in C^2\(\partial\D^2\)$, then
$$h(f(z))-h(\xi)=\delta h'(\xi)\Theta(z)+O\(\frac{\delta^2}{(\delta+|z+\xi|)^2}\),$$
with
$$\Theta(z)=\Theta_{\delta,\xi}(z):=\frac{2\left\langle z,\xi^\perp\right\rangle}{1+(1-\delta)^2+2(1-\delta)\langle z,\xi\rangle}.$$
\end{proposition}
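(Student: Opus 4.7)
The plan is to reduce every statement to the explicit formula for $f(z)-\xi$, and then estimate by elementary means. I would start by rewriting, using $|\xi|=1$,
$$f(z)-\xi = \frac{\delta(z-\xi)}{1+(1-\delta)\bar\xi z},$$
so that \eqref{fxi} becomes equivalent to a lower bound for the denominator. Setting $\mu:=\bar\xi z$, so that $|\mu|\le1$ and $|1+\mu|=|z+\xi|$, two trivial triangle inequalities give $|1+(1-\delta)\mu|\ge\delta$ and $|1+(1-\delta)\mu|\ge|z+\xi|-\delta$. Taking the maximum of these two and splitting into the cases $|z+\xi|\le 2\delta$ and $|z+\xi|\ge 2\delta$ yields $|1+(1-\delta)\bar\xi z|\gtrsim\delta+|z+\xi|$, and hence the pointwise bound $|f(z)-\xi|\lesssim\delta/(\delta+|z+\xi|)$.

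With \eqref{fxi} in hand, the $L^p$-bounds follow by passing to polar (on $\D^2$) or arclength (on $\partial\D^2$) coordinates centred at $-\xi$, reducing to the one-dimensional integrals $\int_0^2\delta^p\rho(\delta+\rho)^{-p}d\rho$ and $\int_0^\pi\delta^p(\delta+s)^{-p}ds$. Splitting each at $\rho=\delta$ (resp.\ $s=\delta$), the three cases $p<2$, $p=2$, $p>2$ for the area integral and the two cases $p=1$, $p>1$ for the boundary integral fall out of whether the integrand is integrable at the origin and, if not, the order of the logarithmic/algebraic divergence.

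For the Taylor expansion of $h\circ f$ on $\partial\D^2$, I would decompose $\bar\xi z = a+ib$ with $a=\langle z,\xi\rangle$, $b=\langle z,\xi^\perp\rangle$, $a^2+b^2=1$, so that the square of the denominator equals $D := 1+(1-\delta)^2+2(1-\delta)a$, which is precisely the denominator of $\Theta$. A direct expansion of real and imaginary parts gives
$$\bar\xi(f(z)-\xi) = \frac{-\delta^2(1-a)+i\delta(2-\delta)b}{D}.$$
Since $|f(z)|=1$, writing $\bar\xi f(z)=e^{i\beta}$ we obtain $\sin\beta=\delta(2-\delta)b/D$. Observing that $D\asymp(\delta+|z+\xi|)^2$ (from $1+a=|z+\xi|^2/2$), in the regime $|z+\xi|\ge C\delta$ with $C$ large we have $|\sin\beta|\ll 1$, so that $\beta=\sin\beta+O(\sin^3\beta)$ and $h(\xi e^{i\beta})-h(\xi)=h'(\xi)\beta+O(\beta^2)$. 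The algebraic identity $\delta(2-\delta)b/D-2\delta b/D=-\delta^2 b/D$, combined with the bound $|b|\le|z+\xi|$ (which follows from $b^2=(1-a)(1+a)\le 2(1+a)=|z+\xi|^2$), then yields the claimed expansion with error $O(\delta^2/(\delta+|z+\xi|)^2)$.

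The subtle point, and what I expect to be the main annoyance, is the regime $|z+\xi|\lesssim\delta$: here $f(z)$ may be close to $-\xi$ rather than to $\xi$ (note that $f(-\xi)=-\xi$), so the arcsine inversion $\beta=\arcsin(\sin\beta)$ is invalid modulo $2\pi$. However, in this regime $\delta^2/(\delta+|z+\xi|)^2\asymp 1$, while $h(f(z))-h(\xi)$ is uniformly bounded and $\delta h'(\xi)\Theta(z)=O(\delta|b|/D)=O(1)$ by virtue of $|b|\lesssim\delta$ and $D\gtrsim\delta^2$. Hence the expansion holds trivially in this regime, and splitting the boundary into $\{|z+\xi|\le C\delta\}$ and $\{|z+\xi|\ge C\delta\}$ completes the argument.
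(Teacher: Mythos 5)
Your proposal is correct and takes essentially the same route as the paper: write out $f(z)-\xi=\delta(z-\xi)/(1+(1-\delta)\bar\xi z)$ explicitly, lower-bound the denominator by a constant multiple of $\delta+|z+\xi|$, deduce the pointwise and $L^p$ bounds, then Taylor-expand $h$ along the circle via the tangential derivative and the explicit imaginary part of $\bar\xi(f(z)-\xi)$. The only cosmetic differences are that you obtain the denominator bound from two triangle inequalities and a case split where the paper uses the algebraic identity $|(1-\delta)z+\xi|^2=\delta^2+(1-\delta)|z+\xi|^2+\delta(1-\delta)(1-|z|^2)$, and that you explicitly flag the regime $|z+\xi|\lesssim\delta$, which the paper subsumes into the one-line chain-rule expansion $h(f(z))-h(\xi)=h'(\xi)\langle f(z)-\xi,\xi^\perp\rangle+O(|f(z)-\xi|^2)$, valid there as well since both sides are already $O(1)$.
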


\begin{proof}
From the definition of $f(z)$, we have
$$|f(z)-\xi|=\left|\frac{\delta(z-\xi)}{1+(1-\delta)\overline\xi z}\right|=\frac{\delta|z-\xi|}{|(1-\delta)z+\xi|}=O\(\frac\delta{|(1-\delta)z+\xi|}\).$$
Moreover, since $z\in\D^2$ and $\xi\in\partial\D^2$, then
\begin{eqnarray}
\nonumber|(1-\delta)z+\xi|^2&=&\delta^2+(1-\delta)|z+\xi|^2+\delta(1-\delta)\(1-|z|^2\)\\
\nonumber&\ge&\delta^2+(1-\delta)|z+\xi|^2\\
\nonumber&\ge&\frac{\delta^2+|z+\xi|^2}2\\
\label{deltazxi}&\ge&\frac{(\delta+|z+\xi|)^2}4
\end{eqnarray}
hence $\delta+|z+\xi|=O\(|(1-\delta)z+\xi|\)$, which proves \eqref{fxi}.

Take now $z\in\partial\D^2$ and $h\in C^2\(\partial\D^2\)$; by the chain rule we have
\begin{equation}\label{htheta}
h(f(z))-h(\xi)=h'(\xi)\left\langle f(z)-\xi,\xi^\perp\right\rangle+O\(|f(z)-\xi|^2\).
\end{equation}
Moreover, the scalar product equals
\begin{eqnarray*}
\left\langle f(z)-\xi,\xi^\perp\right\rangle&=&\mathrm{Im}\(\overline\xi(f(z)-\xi)\)\\
&=&\mathrm{Im}{\frac{\delta\(\overline\xi z-1\)}{1+(1-\delta)\overline\xi z}}\\
&=&\delta\mathrm{Im}\frac{\(\overline\xi z-1\)\(1+(1-\delta)\xi\overline z\)}{\left|1+(1-\delta)\overline\xi z\right|^2}\\
&=&\delta\mathrm{Im}\(\frac{\delta\langle z,\xi\rangle+\imath(2-\delta)\left\langle z,\xi^\perp\right\rangle}{1+(1-\delta)^2+2(1-\delta)\langle z,\xi\rangle}\)\\
&=&\delta\(1-\frac\delta2\)\Theta(z).
\end{eqnarray*}
The conclusion follows by putting together this expression with \eqref{fxi} and \eqref{htheta}.
\end{proof}

\begin{proposition}\label{intFracLap}
Given $\xi\in\partial\D^2$,
$$\int\limits_{\partial\D^2}(h(f(z))-h(\xi))dz=-2\pi\delta(-\Delta)^\frac12h(\xi)+O\(\delta^2\).$$
\end{proposition}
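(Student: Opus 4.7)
The plan is to recognize the integral as an instance of the Poisson formula after a conformal change of variables. Since $f=f_{\delta,\xi}$ is a Möbius transformation of $\D^2$, it restricts to a diffeomorphism of $\partial\D^2$ onto itself, and so the substitution $w=f(z)$ gives
$$\int_{\partial\D^2}(h(f(z))-h(\xi))dz=\int_{\partial\D^2}(h(w)-h(\xi))\left|(f^{-1})'(w)\right|dw.$$
Since $f^{-1}(w)=\frac{w-(1-\delta)\xi}{1-(1-\delta)\overline\xi w}$ is again a Blaschke factor with parameter $-(1-\delta)\xi$, a direct computation gives
$$\left|(f^{-1})'(w)\right|=\frac{1-(1-\delta)^2}{\left|1-(1-\delta)\overline\xi w\right|^2}\qquad\text{for }w\in\partial\D^2,$$
which is precisely $2\pi$ times the Poisson kernel at the interior point $a:=(1-\delta)\xi\in\D^2$.

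Applying the Poisson representation formula to the harmonic extension $H$ of $h$ (and the constant $h(\xi)$) we obtain
$$\int_{\partial\D^2}(h(f(z))-h(\xi))dz=2\pi\bigl(H((1-\delta)\xi)-h(\xi)\bigr)=2\pi\bigl(H((1-\delta)\xi)-H(\xi)\bigr),$$
since $H(\xi)=h(\xi)$ on $\partial\D^2$. A Taylor expansion of $H\in C^1$ along the segment from $\xi$ to $(1-\delta)\xi$, together with the fact that $\xi$ is the outward unit normal at $\xi\in\partial\D^2$, yields
$$H((1-\delta)\xi)-H(\xi)=-\delta\langle\nabla H(\xi),\xi\rangle+O(\delta^2)=-\delta\,\partial_\nu H(\xi)+O(\delta^2).$$
The identity $\partial_\nu H|_{\partial\D^2}=(-\Delta)^{\frac12}h$ recalled in the introduction then gives exactly the claimed estimate.

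There is no real obstacle in this approach; the only delicate point is to resist the tempting route via Proposition~\ref{f} and a direct Taylor expansion inside the integral, which would produce an error of the form $\delta^2\int_{\partial\D^2}\frac{dz}{(\delta+|z+\xi|)^2}=O(\delta)$, one order worse than needed. The conformal-invariance-plus-Poisson-kernel argument bypasses this loss by performing the expansion in the harmonic extension rather than on the boundary.
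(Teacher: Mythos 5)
Your proof is correct, and after the common first step (the substitution $w=f(z)$ and the computation of the Jacobian $\left|\left(f^{-1}\right)'(w)\right|$), it departs genuinely from the paper's argument. You recognize that $\left|\left(f^{-1}\right)'(w)\right|=\frac{1-(1-\delta)^2}{\left|1-(1-\delta)\overline\xi w\right|^2}$ coincides on $\partial\D^2$ with $2\pi$ times the Poisson kernel at the interior point $(1-\delta)\xi$ (using $\left|1-(1-\delta)\overline\xi w\right|=|w-(1-\delta)\xi|$ when $|w|=1$), so the whole integral collapses to $2\pi\bigl(H((1-\delta)\xi)-h(\xi)\bigr)$, and the claim becomes a radial Taylor expansion of $H$ combined with $\partial_\nu H=(-\Delta)^{1/2}h$. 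The paper instead works the integral out explicitly: it subtracts the odd first-order term $h'(\xi)\langle w,\xi^\perp\rangle$ (which cancels against the even kernel), splits the domain at $|w-\xi|=\delta$, and matches the far-field piece against the truncated singular integral defining $(-\Delta)^{1/2}h$. Your route is conceptually cleaner and shorter; its only additional input is that $h\in C^2(\partial\D^2)$ gives $H\in W^{2,p}\cap C^{1,\alpha}(\overline{\D^2})$ with $\partial_{rr}H$ bounded near $\xi$ (e.g. from the polar-coordinate identity $\partial_{rr}H=-\tfrac1r\partial_r H-\tfrac1{r^2}\partial_{\theta\theta}H$, both terms being bounded up to the boundary), which is exactly what is needed for the $O(\delta^2)$ remainder in the expansion $H((1-\delta)\xi)-H(\xi)=-\delta\,\partial_\nu H(\xi)+O(\delta^2)$. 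This regularity usage is on par with the paper's reliance on $h(w)-h(\xi)-h'(\xi)\langle w,\xi^\perp\rangle=O(|w-\xi|^2)$, so neither proof is more demanding. Your closing remark about why a direct Taylor expansion in $z$ via Proposition~\ref{f} loses an order is also accurate, and it correctly identifies why the change of variables (in both proofs) is the essential move.
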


\begin{proof}
Making the change of variables $w=f(z)$ we get that
\begin{eqnarray}
\nonumber\int\limits_{\partial\D^2}(h(f(z))-h(\xi))dz&=&\int\limits_{\partial\D^2}(h(w)-h(\xi))\left|\(f^{-1}\)'(w)\right|dw\\
\nonumber&=&\int\limits_{\partial\D^2}(h(w)-h(\xi))\frac{1-(1-\delta)^2}{\left|1-(1-\delta)\overline\xi w\right|^2}dw\\
\label{A0}&=&\(2\delta+O\(\delta^2\)\)\int\limits_{\partial\D^2}\frac{h(w)-h(\xi)-h'(\xi)\left\langle w,\xi^\perp\right\rangle}{\left|1-(1-\delta)\overline\xi w\right|^2}dw,
\end{eqnarray}
where we have used that $\left|1-(1-\delta)\overline\xi w\right|^2$ is even with respect to $\left\langle w,\xi^\perp\right\rangle$.
Since
$$\frac1{\left|1-(1-\delta)\overline\xi w\right|^2}=\frac1{\delta^2+(1-\delta)|w-\xi|^2}=\begin{cases}O\(\frac1{\delta^2}\)&\hbox{if }|w-\xi|\le\delta,\\
\frac1{|w-\xi|^2}\(1+O\(\delta+\frac{\delta^2}{|w-\xi|^2}\)\)&\hbox{if }|w-\xi|>\delta,\end{cases}$$
then
\begin{equation}\label{A2}
\!\!\!\!\!\!\!\!\!\!\int\limits_{\{|w-\xi|\le\delta\}}\frac{h(w)-h(\xi)-h'(\xi)\left\langle w,\xi^\perp\right\rangle}{\left|1-(1-\delta)\overline\xi w\right|^2}dw=\int\limits_{\{|w-\xi|\le\delta\}}O\(\frac{|w-\xi|^2}{\delta^2}\)dw=O(\delta).
\end{equation}
Likewise,
\begin{eqnarray}
\!\!\!\!\!\!\!\!\!\!\!\!\!\!\!\!\!\!\!\!\nonumber&&\int\limits_{\{|w-\xi|>\delta\}}\frac{h(w)-h(\xi)-h'(\xi)\left\langle w,\xi^\perp\right\rangle}{\left|1-(1-\delta)\overline\xi w\right|^2}dw\\
\!\!\!\!\!\!\!\!\!\!\!\!\!\!\!\!\!\!\!\!\nonumber&=&\(\int\limits_{\{|w-\xi|>\delta\}}\frac{h(w)-h(\xi)}{|w-\xi|^2}dw\)(1+O(\delta))+\int\limits_{\{|w-\xi|>\delta\}}O\(\frac{\delta^2}{|w-\xi|^2}\)dw\\
\!\!\!\!\!\!\!\!\!\!\!\!\!\!\!\!\!\!\!\!\label{A1}&=&\int\limits_{\{|w-\xi|>\delta\}}\frac{h(w)-h(\xi)}{|w-\xi|^2}dw+O(\delta);
\end{eqnarray}
and recalling that, by the definition of the fractional Laplacian,
$$(-\Delta)^\frac12h(\xi)=-\frac1\pi\int\limits_{\{|w-\xi|>\delta\}}\frac{h(w)-h(\xi)}{|w-\xi|^2}dw+O(\delta),$$
the conclusion follows putting \eqref{A0}, \eqref{A2} and \eqref{A1} together.
\end{proof}

\begin{proposition}\label{estKgrad}The following identities hold true:
\begin{enumerate}
\item $\int\limits_{\D^2}\frac{(1-\delta)|w|^2-1+\delta w_1}{|1+(1-\delta)w|^2}\frac{4\varphi(\xi)^2}{\(\varphi(\xi)^2+K(\xi)|w|^2\)^3}w_1dw=\frac{2\pi}{\(\varphi(\xi)^2+K(\xi)\)^2}+O(\delta)$.
\item $\int\limits_{\D^2}\frac{(2-\delta)w_2}{|1+(1-\delta)+w|^2}\frac{4\varphi(\xi)^2}{\(\varphi(\xi)^2+K(\xi)|w|^2\)^3}w_1dw=0$.
\item $\int\limits_{\D^2}\frac{(1-\delta)|w|^2-1+\delta w_1}{|1+(1-\delta)w|^2}\frac{4\varphi(\xi)^2}{\(\varphi(\xi)^2+K(\xi)|w|^2\)^3}w_2dw=0$.
\item $\int\limits_{\D^2}\frac{(2-\delta)w_2}{|1+(1-\delta)w|^2}\frac{4\varphi(\xi)^2}{\(\varphi(\xi)^2+K(\xi)|w|^2\)^3}w_2dw=\frac{2\pi}{\(\varphi(\xi)^2+K(\xi)\)^2}+O(\delta)$.
\end{enumerate}
\end{proposition}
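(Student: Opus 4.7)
Identities (2) and (3) are immediate consequences of symmetry under $w_2\mapsto -w_2$. The denominator $|1+(1-\delta)w|^2=1+2(1-\delta)w_1+(1-\delta)^2|w|^2$, the radial factor $\(\varphi(\xi)^2+K(\xi)|w|^2\)^{-3}$, and the polynomial $(1-\delta)|w|^2-1+\delta w_1$ are all even in $w_2$, while the factors $(2-\delta)w_2$ and $w_2$ (isolated in (2) and (3) respectively) are odd. In both cases the integrand is odd in $w_2$, so the integrals vanish.

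For (1) and (4) I would pass to polar coordinates $w=re^{i\theta}$ and perform the $\theta$ integration first. Using the Fourier expansion
$$\frac1{|1+ae^{i\theta}|^2}=\frac1{1-a^2}\sum_{n\in\Z}(-a)^{|n|}e^{in\theta}\qquad\(a=(1-\delta)r\in[0,1)\),$$
one reads off the three required angular integrals: $\int_0^{2\pi}\cos\theta\,|1+ae^{i\theta}|^{-2}d\theta=-2\pi a/(1-a^2)$, $\int_0^{2\pi}\cos^2\theta\,|1+ae^{i\theta}|^{-2}d\theta=\pi(1+a^2)/(1-a^2)$, and $\int_0^{2\pi}\sin^2\theta\,|1+ae^{i\theta}|^{-2}d\theta=\pi$. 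The third gives (4) directly, reducing it to $(2-\delta)\pi\int_0^1 4\varphi(\xi)^2 r^3\(\varphi(\xi)^2+K(\xi)r^2\)^{-3}dr$. For (1), the combination
$$\((1-\delta)r^2-1\)\cdot\frac{-2\pi a}{1-a^2}+\delta r\cdot\frac{\pi(1+a^2)}{1-a^2}$$
simplifies, after expanding and regrouping the numerator, to $\pi r(2-\delta)\,(1-a^2)/(1-a^2)=\pi r(2-\delta)$, so (1) reduces to exactly the same radial integral as (4).

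The remaining radial integral is elementary: the substitution $u=\varphi(\xi)^2+K(\xi)r^2$ converts it into $\frac{2\varphi(\xi)^2}{K(\xi)^2}\int_{\varphi(\xi)^2}^{\varphi(\xi)^2+K(\xi)}\(u^{-2}-\varphi(\xi)^2 u^{-3}\)du$, which after straightforward arithmetic equals $1/\(\varphi(\xi)^2+K(\xi)\)^2$. Hence
$$I_1=I_4=(2-\delta)\pi/\(\varphi(\xi)^2+K(\xi)\)^2=\frac{2\pi}{\(\varphi(\xi)^2+K(\xi)\)^2}+O(\delta).$$

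The only nonroutine step is spotting the algebraic cancellation $(2-\delta)(1-a^2)$ in the numerator of the $I_1$ bracket; once this is observed, both (1) and (4) boil down to the same elementary radial integral. I expect no substantial technical obstacles, and in particular no singularity analysis or asymptotic matching near $r=1$ is needed, since the potentially dangerous factor $(1-a^2)=1-(1-\delta)^2 r^2$ cancels exactly rather than producing logarithmic corrections.
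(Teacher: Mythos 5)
Your proof is correct. Parts (2) and (3) by oddness in $w_2$ are exactly as in the paper, and I verified your angular integrals and the algebraic cancellation
\[
\bigl[(1-\delta)r^2-1\bigr](-2a)+\delta r(1+a^2)=(2-\delta)r\bigl(1-a^2\bigr)\qquad(a=(1-\delta)r),
\]
as well as the radial integral $\int_0^1\frac{4\varphi^2 r^3}{(\varphi^2+Kr^2)^3}dr=\frac1{(\varphi^2+K)^2}$. So (1) and (4) both equal $(2-\delta)\pi/(\varphi(\xi)^2+K(\xi))^2$ exactly.

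The paper takes a slightly different route for (1) and (4). Rather than integrating the angular part of the full numerator at once, it first decomposes
\[
\frac{(1-\delta)|w|^2-1+\delta w_1}{|1+(1-\delta)w|^2}=\frac1{1-\delta}-c_\delta\,\frac{1+(1-\delta)w_1}{|1+(1-\delta)w|^2},
\]
kills the $\frac1{1-\delta}$ piece against $\int w_1\cdot(\text{radial})=0$, and then evaluates the remaining angular integral via a further partial-fraction rearrangement and the closed formula $\int_{-\pi}^\pi\frac{dt}{a+\cos t}=\frac{2\pi}{\sqrt{a^2-1}}$. Your version instead uses the Fourier expansion of $|1+ae^{i\theta}|^{-2}$ to read off the three required moments and then observes the exact $(1-a^2)$ cancellation, which makes (1) and (4) reduce to literally the same elementary radial integral. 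This is a bit cleaner: it gives the exact value $(2-\delta)\pi/(\varphi^2+K)^2$ without any intermediate $O(\delta)$ bookkeeping, and in fact the decomposition in the paper's proof has a benign slip (the coefficient should be $(2-\delta)/(1-\delta)$, not $2-\delta$) that your direct approach avoids. Both methods are equally valid; yours is marginally more economical.
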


\begin{proof}
We decompose
$$\frac{(1-\delta)|w|^2-1+\delta w_1}{|1+(1-\delta)w|^2}=\frac1{1-\delta}-(2-\delta)\frac{1+(1-\delta)w_1}{|1+(1-\delta)w|^2};$$
since, by symmetry with respect to $w_1$,
$$\int\limits_{\D^2}\frac{4\varphi(\xi)^2}{\(\varphi(\xi)^2+K(\xi)|w|^2\)^3}w_1dw=0,$$
then we suffice to show
$$\int\limits_{\D^2}\frac{1+(1-\delta)w_1}{|1+(1-\delta)w|^2}\frac{4\varphi(\xi)^2}{\(\varphi(\xi)^2+K(\xi)|w|^2\)^3}w_1dw=-\frac\pi{\(\varphi(\xi)^2+K(\xi)\)^2}+O(\delta).$$
By using polar coordinates we get:
\begin{eqnarray*}
&&\int\limits_{\D^2}\frac{1+(1-\delta)w_1}{|1+(1-\delta)w|^2}\frac{4\varphi(\xi)^2}{\(\varphi(\xi)^2+K(\xi)|w|^2\)^3}w_1dw\\
&=&\int\limits_0^1\(\int\limits_{-\pi}^\pi r\frac{1+(1-\delta)r\cos t}{1+2(1-\delta)r\cos t+(1-\delta)^2r^2}\frac{4\varphi(\xi)^2}{\(\varphi(\xi)^2+K(\xi)r^2\)^3}r\cos tdt\)dr\\
&=&\varphi(\xi)^2\int\limits_0^1\frac{2r^2}{\(\varphi(\xi)^2+K(\xi)r^2\)^3}\(\int\limits_{-\pi}^\pi\(\cos t-\frac{(1-\delta)^2r^2-1}{2(1-\delta)r}-\frac{\frac{1-(1-\delta)^4r^4}{4(1-\delta)^2r^2}}{\frac{1+(1-\delta)^2r^2}{2(1-\delta)r}+\cos t}\)dt\)dr\\
&=&\varphi(\xi)^2\int\limits_0^1\frac{2r^2}{\(\varphi(\xi)^2+K(\xi)r^2\)^3}2\pi\(-\frac{(1-\delta)^2r^2-1}{2(1-\delta)r}-\frac{\frac{1-(1-\delta)^4r^4}{4(1-\delta)^2r^2}}{\sqrt{\(\frac{1+(1-\delta)^2r^2}{2(1-\delta)r}\)^2-1}}\)dr\\
&=&-\pi(1-\delta)\varphi(\xi)^2\int\limits_0^1\frac{4r^3}{K(\xi)\(\varphi(\xi)^2+K(\xi)r^2\)^2}dr\\
&=&-(1-\delta)\frac\pi{\(\varphi(\xi)^2+K(\xi)\)^2},
\end{eqnarray*}
where we used
\begin{equation}\label{resid}
\int\limits_{-\pi}^\pi\frac{dt}{a+\cos t}=\frac{2\pi}{\sqrt{a^2-1}}\quad\hbox{for }|a|>1.
\end{equation}
This proves (1).

Since the integrands are odd in $w_2$, (2) and (3) follow straightforward. Finally, (4) can be proved similarly exploiting again \eqref{resid}:
\begin{eqnarray*}
&&\int\limits_{\D^2}\frac{w_2}{|1+(1-\delta)w|^2}\frac{4\varphi(\xi)^2}{\(\varphi(\xi)^2+K(\xi)|w|^2\)^3}w_2dw\\
&=&\int\limits_0^1\(\int\limits_{-\pi}^\pi r\frac{r\sin t}{1+2(1-\delta)r\cos t+(1-\delta)^2r^2}\frac{4\varphi(\xi)^2}{\(\varphi(\xi)^2+K(\xi)r^2\)^3}r\sin tdt\)dr\\
&=&\frac{\varphi(\xi)^2}{1-\delta}\int\limits_0^1\frac{2r^2}{\(\varphi(\xi)^2+K(\xi)r^2\)^3}\(\int\limits_{-\pi}^\pi\(\frac{1+(1-\delta)^2r^2}{2(1-\delta)r}-\cos t-\frac{\(\frac{1-(1-\delta)^2r^2}{2(1-\delta)r}\)^2}{\frac{1+(1-\delta)^2r^2}{2(1-\delta)r}+\cos t}\)dt\)dr\\
&=&\frac{\varphi(\xi)^2}{1-\delta}\int\limits_0^1\frac{2r^2}{\(\varphi(\xi)^2+K(\xi)r^2\)^3}2\pi\(\frac{1+(1-\delta)^2r^2}{2(1-\delta)r}-\frac{\(\frac{1-(1-\delta)^2r^2}{2(1-\delta)r}\)^2}{\sqrt{\(\frac{1+(1-\delta)^2r^2}{2(1-\delta)r}\)^2-1}}\)dr\\
&=&\pi\varphi(\xi)^2\int\limits_0^1\frac{4r^3}{\(\varphi(\xi)^2+K(\xi)r^2\)^3}dr\\
&=&\frac\pi{\(\varphi(\xi)^2+K(\xi)\)^2},
\end{eqnarray*}
and the result follows by multiplying by $2-\delta$.
\end{proof}

\begin{proposition}\label{estKlap}The following identities hold true:
\begin{enumerate}
\item $\int\limits_{\Omega_\delta}\frac{dw}{(1+w_1)^2+w_2^2}=\pi\log\frac1\delta+O(1)$,
\item $\int\limits_{\Omega_\delta}\frac{(1+w_1)^2-w_2^2}{\((1+w_1)^2+w_2^2\)^2}dw=O(1)$,
\item $\int\limits_{\Omega_\delta}\frac{(1+w_1)w_2}{\((1+w_1)^2+w_2^2\)^2}dw=0$,
\end{enumerate}
where 
$\Omega_\delta:=\left\{\left|w-\frac{1-\delta}\delta\right|\le\frac{1-\delta}\delta\right\}$.
\end{proposition}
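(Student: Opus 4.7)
The plan is to reduce all three integrals to computations over the fixed unit disk by recognizing $\Omega_\delta$ as the affine image of $\D^2$ under the map $z\mapsto w=\frac{1-\delta}\delta(z+1)$, which sends $|z|=1$ onto the boundary circle of $\Omega_\delta$. The key identities produced by this change of variables are
$$1+w=\frac{1+(1-\delta)z}\delta,\qquad dw=\(\frac{1-\delta}\delta\)^2dz,$$
so that any integrand of the form $1/|1+w|^{2k}$ pulls back to a constant multiple of $1/|1+(1-\delta)z|^{2k}$ on the fixed disk $\D^2$.

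For (1), the pullback yields $(1-\delta)^2\int_{\D^2}dz/|1+(1-\delta)z|^2$. Writing $z=re^{\imath\theta}$, the angular integral is handled by the standard identity $\int_0^{2\pi}d\theta/(A+B\cos\theta)=2\pi/\sqrt{A^2-B^2}$ with $A=1+(1-\delta)^2r^2$ and $B=2(1-\delta)r$, which collapses it to $2\pi/(1-(1-\delta)^2r^2)$. The subsequent radial integral is elementary and equals $-\pi\log(1-(1-\delta)^2)/(1-\delta)^2$; multiplying back by $(1-\delta)^2$ produces $-\pi\log(\delta(2-\delta))=\pi\log(1/\delta)+O(1)$, as desired.

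For (2) and (3), I would exploit the factorization $(1+w)^2=\((1+w_1)^2-w_2^2\)+2\imath(1+w_1)w_2$, which identifies the two integrands with the real and imaginary parts of $1/(1+\bar w)^2$. The same substitution reduces the problem to computing $(1-\delta)^2\int_{\D^2}dz/(1+(1-\delta)\bar z)^2$. Writing $z=re^{\imath\theta}$ and substituting $\zeta=e^{\imath\theta}$ on the unit circle, the angular integral becomes a contour integral around $|\zeta|=1$ whose only singularity is a double pole at $\zeta=-(1-\delta)r$ inside $\D^2$; its residue gives $2\pi$ independently of $r$, and the radial integration $\int_0^1 r\,dr=1/2$ then yields $\pi$ for the inner integral. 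This is real, so (2) equals $\pi(1-\delta)^2=O(1)$ and (3) equals $0$. Identity (3) is in fact immediate from the $w_2\mapsto -w_2$ symmetry of $\Omega_\delta$ together with the oddness of its integrand in $w_2$.

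I do not foresee any serious obstacle: the one nontrivial step is the affine parameterization of $\Omega_\delta$, after which all three estimates reduce to classical Poisson-kernel and residue computations on the unit disk.
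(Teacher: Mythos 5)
Your proof is correct, and it takes a genuinely different route from the paper. The paper works in polar coordinates $(r,t)$ centered at the origin, in which $\Omega_\delta$ becomes the cardioid-like region $\{r\le\frac{2(1-\delta)}\delta\cos t,\;|t|\le\frac\pi2\}$; it then replaces the exact radial integrand by the model profile $\frac1{1+r}$ (respectively $\frac{\cos(2t)}{1+r}$) with an $O\left((1+r)^{-2}\right)$ error, integrates $r$ first, and controls each piece by hand. You instead pull $\Omega_\delta$ back to the \emph{fixed} unit disk via the affine map $w=\frac{1-\delta}\delta(z+1)$, after which (1) reduces to a classical Poisson-kernel angular integral and the radial part integrates in closed form, while (2) and (3) are subsumed into a single complex integral $\int_{\D^2}(1+(1-\delta)\bar z)^{-2}dz$ evaluated exactly by a residue (equivalently, by the mean-value property of the holomorphic function $z\mapsto(1+(1-\delta)z)^{-2}$). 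The payoff is that you obtain the \emph{exact} values $\pi(1-\delta)^2$ for (2) and $0$ for (3) rather than just $O(1)$ bounds, with no error-splitting needed; the cost is invoking the residue theorem, which the paper avoids in favour of elementary estimates. Both proofs share the symmetry argument $w_2\mapsto-w_2$ for (3). I checked the computations: $1+w=\frac{1+(1-\delta)z}\delta$, $dw=\left(\frac{1-\delta}\delta\right)^2dz$, the angular integral in (1) collapses to $\frac{2\pi}{1-(1-\delta)^2r^2}$, and the residue of $\frac{\zeta}{(\zeta+(1-\delta)r)^2}$ at the double pole inside $|\zeta|=1$ is indeed $1$; all are correct.
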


\begin{proof}
Changing to polar coordinates
$$\int\limits_{\Omega_\delta}\frac{dw}{(1+w_1)^2+w_2^2}=\int\limits_{\Omega_\delta}\frac r{1+2r\cos t+r^2}drdt,\quad \Omega_\delta=\left\{r\le\frac{2(1-\delta)}\delta\cos t,\;-\frac\pi2\le t\le\frac\pi2\right\}.$$
Since $\cos t>0$ on $\Omega_\delta$,
$$\frac r{1+2r\cos t+r^2}-\frac1{1+r}=O\(\frac1{(1+r)^2}\)\quad\hbox{uniformly in }t,$$
and we have 
\begin{eqnarray*}
\int\limits_{\Omega_\delta}\frac r{1+2r\cos t+r^2}drdt&=&\int\limits_{\Omega_\delta}\frac{drdt}{1+r}+O(1)\\
&=&\int\limits_{-\frac\pi2}^\frac\pi2\log\(1+\frac{2(1-\delta)}\delta\cos t\)dt+O(1)\\
&=&\log\frac1\delta\int\limits_{-\frac\pi2}^\frac\pi2dt+\int\limits_{-\frac\pi2}^\frac\pi2\log(\delta+2(1-\delta)\cos t)dt+O(1)\\
&=&\pi\log\frac1\delta+O(1),
\end{eqnarray*}
and (1) is proved.

Since $(1+w_1)^2-w_2^2=w_1^2-w_2^2+O(|w|)$ then
\begin{eqnarray*}
\int\limits_{\Omega_\delta}\frac{(1+w_1)^2-w_2^2}{\((1+w_1)^2+w_2^2\)^2}dw&=&\int\limits_{\Omega_\delta}\frac{w_1^2-w_2^2}{\((1+w_1)^2+w_2^2\)^2}dw+O\(\int\limits_{\{w_2\ge0\}}\frac{|w|}{\((1+w_1)^2+w_2^2\)^2}dw\)\\
\!\!\!\!\!\!\!\!\!\!&=&\int\limits_{\Omega_\delta}\frac{r^3\cos(2t)}{\(1+2r\cos t+r^2\)^2}drdt+O(1).
\end{eqnarray*}
As before,
$$\frac{r^3}{\(1+2r\cos t+r^2\)^2}-\frac1{1+r}=O\(\frac1{(1+r)^2}\)\quad\hbox{uniformly in }t,$$
leading to
$$\!\!\!\!\!\!\!\!\!\!\int\limits_{\Omega_\delta}\frac{r^3\cos(2t)}{\(1+2r\cos t+r^2\)^2}drdt=\int\limits_{\Omega_\delta}\frac{\cos(2t)}{1+r}drdt+O\(\int\limits_{\{w_2\ge0\}}\frac{drdt}{(1+r)^2}\)=\int\limits_{\Omega_\delta}\frac{\cos(2t)}{1+r}drdt+O(1).$$
Furthermore,
\begin{eqnarray*}
\int\limits_{\Omega_\delta}\frac{\cos(2t)}{1+r}drdt&=&\int\limits_{-\frac\pi2}^\frac\pi2\cos(2t)\(\int\limits_0^{\frac{2(1-\delta)}\delta\cos t}\frac{dr}{1+r}\)dt\\
&=&\int\limits_{-\frac\pi2}^\frac\pi2\cos(2t)\log\(1+\frac{2(1-\delta)}\delta\cos t\)dt\\
&=&\int\limits_{-\frac\pi2}^\frac\pi2\cos(2t)\log(\delta+2(1-\delta)\cos t)dt\\
&=&\int\limits_{-\frac\pi2}^\frac\pi2O\(1+\log^-(2\cos t)^2\)dt\\
&=&O(1).
\end{eqnarray*}
Therefore 
$$\int\limits_{\Omega_\delta}\frac{(1+w_1)^2-w_2^2}{\((1+w_1)^2+w_2^2\)^2}dw=O(1),$$
and (2) holds.

Finally the integral in (3) vanishes since it is odd with respect to $w_2$.
\end{proof}

\begin{proposition}\label{intB} The following identities hold true:
\begin{enumerate}
\item $\int\limits_{\partial\D^2}(h(f(z))-h(\xi))\langle z,\xi\rangle dz=2\pi\delta(-\Delta)^\frac12h(\xi)+O\(\delta^2\)$,
\item $\int\limits_{\partial\D^2}(h(f(z))-h(\xi))\left\langle z,\xi^\perp\right\rangle dz=2\pi\delta h'(\xi)+O\(\delta^2\)$.
\end{enumerate}
\end{proposition}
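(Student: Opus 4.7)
The plan is to follow the strategy of Proposition \ref{intFracLap}: namely, to change variables $w=f(z)$ on $\partial\D^2$, which converts both integrals into integrals against the Poisson kernel at $q:=(1-\delta)\xi$. Since $f$ is a conformal automorphism of $\D^2$, the arc-length Jacobian of $f^{-1}|_{\partial\D^2}$ is exactly
$$P(q,w):=\frac{1-(1-\delta)^2}{|1-(1-\delta)\overline\xi w|^2}=\frac{2\delta-\delta^2}{\delta^2+(1-\delta)|w-\xi|^2},$$
so that $dz=P(q,w)\,dw$. A direct M\"obius calculation, separating real and imaginary parts of $\overline\xi f^{-1}(w)=\frac{\overline\xi w-(1-\delta)}{1-(1-\delta)\overline\xi w}$, then yields the two key identities on $\partial\D^2$:
$$\langle f^{-1}(w),\xi\rangle=-1+\frac{\delta(1+\langle w,\xi\rangle)}{2-\delta}P(q,w),\qquad \langle f^{-1}(w),\xi^\perp\rangle=\langle w,\xi^\perp\rangle P(q,w).$$

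For part (1), substituting the first identity decomposes the integral as
$$I_1=-\int_{\partial\D^2}(h(w)-h(\xi))P(q,w)\,dw+\frac{\delta}{2-\delta}\int_{\partial\D^2}(h(w)-h(\xi))(1+\langle w,\xi\rangle)P(q,w)^2\,dw.$$
Undoing the change of variables, the first piece equals $-\int_{\partial\D^2}(h(f(z))-h(\xi))\,dz$, which Proposition \ref{intFracLap} evaluates to $2\pi\delta(-\Delta)^\frac12h(\xi)+O(\delta^2)$. For the second piece I will Taylor-expand $h(w)-h(\xi)=h'(\xi)\langle w,\xi^\perp\rangle+O(|w-\xi|^2)$: the leading term integrates to zero because $(1+\langle w,\xi\rangle)$ and $P(q,w)^2$ are radial functions of $|w-\xi|^2$ while $\langle w,\xi^\perp\rangle$ is odd, and the quadratic remainder satisfies $\int_{\partial\D^2}O(|w-\xi|^2)P(q,w)^2\,dw=O(\delta)$ via the standard split into $\{|w-\xi|\le\delta\}$ and $\{|w-\xi|>\delta\}$. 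Combined with the outer prefactor $\delta/(2-\delta)$, this contributes $O(\delta^2)$.

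For part (2), the second identity reduces the integral to $I_2=\int_{\partial\D^2}(h(w)-h(\xi))\langle w,\xi^\perp\rangle P(q,w)^2\,dw$. I will Taylor-expand $h(w)-h(\xi)=h'(\xi)\langle w,\xi^\perp\rangle+\tfrac12h''(\xi)\langle w,\xi^\perp\rangle^2+O(|w-\xi|^3)$ along the boundary. The $h''$ term produces $\langle w,\xi^\perp\rangle^3P(q,w)^2$, which is odd and thus integrates to zero; the cubic remainder yields $O(\delta^2)$ by the same splitting. The leading contribution $h'(\xi)\int_{\partial\D^2}\langle w,\xi^\perp\rangle^2P(q,w)^2\,dw$ is then evaluated explicitly using $\langle w,\xi^\perp\rangle^2=|w-\xi|^2-|w-\xi|^4/4$, the trigonometric substitution $u=\tan(s/2)$, and the residue formula already used in Proposition \ref{estKgrad}, producing $2\pi\delta h'(\xi)+O(\delta^2)$.

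The main obstacle is controlling all error terms uniformly across the concentration region $\{|w-\xi|\sim\delta\}$, where $P(q,w)^2$ is of size $\delta^{-2}$; careful two-region splitting is needed in every estimate. A conceptual point worth noting is that a direct application of Proposition \ref{f} is not precise enough at order $\delta$ to recover the $(-\Delta)^\frac12h(\xi)$ term in (1): this term arises only after reducing the integral to one against $P(q,w)$ and invoking Proposition \ref{intFracLap}.
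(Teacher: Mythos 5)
Your proposal is correct. It is, however, essentially the same argument as the paper's, organized in the opposite order: the paper stays in the $z$-variable and first decomposes $\langle z,\xi\rangle=\langle z+\xi,\xi\rangle-1$, whereas you change variables $w=f(z)$ first and then split $\langle f^{-1}(w),\xi\rangle=-1+\frac{\delta(1+\langle w,\xi\rangle)}{2-\delta}P(q,w)$; since $\langle z+\xi,\xi\rangle$ is exactly $\frac{\delta(1+\langle w,\xi\rangle)}{2-\delta}P(q,w)$ after the change of variables, the two decompositions are identical, and both hook the constant piece into Proposition \ref{intFracLap} and kill the leading term of the remainder by the odd symmetry $w\mapsto\xi^2\overline w$. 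What your version buys is that it avoids passing through the $\Theta$-expansion of Proposition \ref{f} and works directly with the square of the Poisson kernel, which makes the concentration structure visible at once; what the paper's version buys is brevity, since it reuses the machinery of Propositions \ref{f} and \ref{intFracLap} that has already been set up. For part (2) the two computations of the main coefficient also differ cosmetically: the paper evaluates $\int_{\partial\D^2}\Theta(z)\langle z,\xi^\perp\rangle\,dz$ via the algebraic boundary identity $\langle z,\xi^\perp\rangle^2/(1+\langle z,\xi\rangle)=1-\langle z,\xi\rangle$, whereas you compute $\int_{\partial\D^2}\langle w,\xi^\perp\rangle^2 P(q,w)^2\,dw=\frac{4\pi\delta}{2-\delta}$ by a $u=\tan(s/2)$ substitution; both give $2\pi\delta+O(\delta^2)$.

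One small caveat worth flagging, which applies equally to your write-up and to the paper's: in part (2) the expansion uses a remainder of order $|w-\xi|^3$ (equivalently, the paper's $O(\delta^3/(\delta+|z+\xi|)^3)$), which strictly speaking requires $h\in C^{2,\alpha}$ or $C^3$ near $\xi$ rather than just $C^2$; with bare $C^2$ one only gets $o(|w-\xi|^2)$, which after integration against $\langle w,\xi^\perp\rangle P^2$ yields $o(\delta^2\log\tfrac1\delta)$ rather than $O(\delta^2)$. This is a pre-existing imprecision in the paper, not a flaw introduced by your argument, and it does not affect the validity of the comparison.
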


\begin{proof}
Using Propositions \ref{intFracLap} and \ref{f}, we get
\begin{eqnarray*}
&&\int\limits_{\partial\D^2}(h(f(z))-h(\xi))\langle z,\xi\rangle dz\\
&=&\int\limits_{\partial\D^2}(h(f(z))-h(\xi))\langle z+\xi,\xi\rangle dz-\int\limits_{\partial\D^2}(h(f(z))-h(\xi))dz\\
&=&\delta h'(\xi)\int\limits_{\partial\D^2}\Theta(z)\langle z+\xi,\xi\rangle dz+\int\limits_{\partial\D^2}O\(\frac{\delta^2}{(\delta+|z+\xi|)^2}\)\langle z+\xi,\xi\rangle dz+2\pi\delta(-\Delta)^\frac12h(\xi)+O\(\delta^2\).
\end{eqnarray*}
Since $\Theta$ is odd with respect to $\xi$ and $\langle z+\xi,\xi\rangle$ is even, then the first integral vanishes; moreover, $\langle z+\xi,\xi\rangle=O\(|z+\xi|^2\)$, therefore
$$\int\limits_{\partial\D^2}O\(\frac{\delta^2}{(\delta+|z+\xi|)^2}\)\langle z+\xi,\xi\rangle dz=O\(\int\limits_{\partial\D^2}\frac{\delta^2|z+\xi|^2}{(\delta+|z+\xi|)^2}\)dz=O\(\delta^2\),$$
which proves (1).

To get (2), we use the formula
$$h(f(z))-h(\xi)=\delta(h'(\xi)+O(\delta))\Theta(z)+\frac{\delta^2h''(\xi)}2\Theta(z)^2+O\(\frac{\delta^3}{(\delta+|z+\xi|)^3}\),$$
which can be deduced by arguing as in Proposition \ref{f}. Therefore,
\begin{eqnarray*}
&&\int\limits_{\partial\D^2}(h(f(z))-h(\xi))\left\langle z,\xi^\perp\right\rangle dz\\
&=&\delta h'(\xi)\int\limits_{\partial\D^2}\Theta(z)\left\langle z,\xi^\perp\right\rangle dz+\int\limits_{\partial\D^2}O\(\delta^2\)\Theta(z)\left\langle z,\xi^\perp\right\rangle dz+\frac{\delta^2h''(\xi)}2\int\limits_{\partial\D^2}\Theta(z)^2\left\langle z,\xi^\perp\right\rangle dz\\
&+&\int\limits_{\partial\D^2}O\(\frac{\delta^3}{(\delta+|z+\xi|)^3}\)\left\langle z,\xi^\perp\right\rangle dz\\
&=&\delta h'(\xi)\int\limits_{\partial\D^2}\Theta(z)\left\langle z,\xi^\perp\right\rangle dz+\int\limits_{\partial\D^2}O\(\frac{\delta^2|z+\xi|}{\delta+|z+\xi|}\)dz+\int\limits_{\partial\D^2}O\(\frac{\delta^3|z+\xi|}{(\delta+|z+\xi|)^3}\)dz\\
&=&\delta h'(\xi)\int\limits_{\partial\D^2}\Theta(z)\left\langle z,\xi^\perp\right\rangle dz+O\(\delta^2\),
\end{eqnarray*}
where we used $\left\langle z,\xi^\perp\right\rangle=O(|z+\xi|)$ and that $\Theta(z)^2\left\langle z,\xi^\perp\right\rangle$ is odd, hence its integral vanishes. Finally,
\begin{eqnarray*}
\int\limits_{\partial\D^2}\Theta(z)\left\langle z,\xi^\perp\right\rangle dz&=&\int\limits_{\partial\D^2}\frac{2\left\langle z,\xi^\perp\right\rangle}{1+(1-\delta)^2+2(1-\delta)\langle z,\xi\rangle}\left\langle z,\xi^\perp\right\rangle dz\\
&=&\int\limits_{\partial\D^2}\(\frac{\left\langle z,\xi^\perp\right\rangle^2}{1+\langle z,\xi\rangle}+O\(\frac{\delta|z+\xi|}{\delta+|z+\xi|}\)\)dz\\
&=&\int\limits_{\partial\D^2}(1-\langle z,\xi\rangle)dz+O(\delta)\\
&=&2\pi+O(\delta),
\end{eqnarray*}
which proves (2).
\end{proof}

\begin{proposition}\label{logarctan}The following identities hold true:
\begin{enumerate}
\item $\int\limits_{\partial\D^2}\log|z+\xi|(h(f(z))-h(\xi))\langle z,\xi\rangle dz=
-2\pi\delta\log\frac1\delta(-\Delta)^\frac12h(\xi)+O(\delta)$.
\item $\int\limits_{\partial\D^2}\arctan\frac{\left\langle z,\xi^\perp\right\rangle}{1+\langle z,\xi\rangle}(h(f(z))-h(\xi))\langle z,\xi\rangle dz=-2\pi\delta\log\frac1\delta h'(\xi)+O(\delta)$,
\item $\int\limits_{\partial\D^2}\log|z+\xi|(h(f(z))-h(\xi))\left\langle z,\xi^\perp\right\rangle dz=O(\delta)$,
\item $\int\limits_{\partial\D^2}\arctan\frac{\left\langle z,\xi^\perp\right\rangle}{1+\langle z,\xi\rangle}(h(f(z))-h(\xi))\left\langle z,\xi^\perp\right\rangle dz=O(\delta)$.
\end{enumerate}
\end{proposition}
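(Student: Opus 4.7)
I would treat the four identities in two groups. Parts (3) and (4) are pure $L^1$ estimates: on $\partial\D^2$, $\log|z+\xi|$ is integrable and $|\arctan(\cdot)|\le\pi/2$, while the geometric identity $|\langle z,\xi^\perp\rangle|=|\sin(\theta-\eta)|\le|z+\xi|$ (with $z=e^{i\theta}$, $\xi=e^{i\eta}$) combined with $|h(f(z))-h(\xi)|\le C\delta/(\delta+|z+\xi|)$ from Proposition \ref{f} gives the uniform bound $|h(f(z))-h(\xi)||\langle z,\xi^\perp\rangle|=O(\delta)$; integration then yields $O(\delta)$ in both cases.

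For (1), I would first decompose $\langle z,\xi\rangle=-1+|z+\xi|^2/2$. The piece carrying $|z+\xi|^2$ is controlled by $|h(f(z))-h(\xi)||z+\xi|^2=O(\delta|z+\xi|+\delta^2)$, so multiplied by $|\log|z+\xi||$ and integrated it contributes $O(\delta)$. The remaining piece $-\int\limits_{\partial\D^2}\log|z+\xi|(h(f(z))-h(\xi))dz$ I would attack via the change of variables $w=f(z)$: a direct computation (as in Proposition \ref{intFracLap}) yields $|f^{-1}(w)+\xi|=\delta|w+\xi|/|1-(1-\delta)\overline\xi w|$, hence
$$\log|z+\xi|=\log\delta+\log\frac{|w+\xi|}{|1-(1-\delta)\overline\xi w|}.$$
Pairing the constant $\log\delta$ with Proposition \ref{intFracLap} produces precisely the announced main term $-2\pi\delta\log\frac1\delta(-\Delta)^{\frac12}h(\xi)$. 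The residual is $O(\delta)$ by a parity argument: both the log factor $\log(|w+\xi|/|1-(1-\delta)\overline\xi w|)$ and the Jacobian $1/|1-(1-\delta)\overline\xi w|^2$ are invariant under the reflection $w\mapsto\xi^2\overline w$ (which fixes $\xi$), so only the even part of $h(w)-h(\xi)$ contributes; this even part is $O(|w-\xi|^2)$ since the odd component $h'(\xi)\langle w-\xi,\xi^\perp\rangle$ drops out, and the extra $|w-\xi|^2$ cancels the singularity of the Jacobian.

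For (2), I would plug in the expansion $h(f(z))-h(\xi)=\delta h'(\xi)\Theta(z)+O(\delta^2/(\delta+|z+\xi|)^2)$ from Proposition \ref{f}. Since the arctan and $\langle z,\xi\rangle$ weights are uniformly bounded and $\int\limits_{\partial\D^2}\delta^2/(\delta+|z+\xi|)^2dz=O(\delta)$, the remainder contributes $O(\delta)$. The leading term $\delta h'(\xi)\int\limits_{\partial\D^2}\arctan\frac{\langle z,\xi^\perp\rangle}{1+\langle z,\xi\rangle}\Theta(z)\langle z,\xi\rangle dz$ I would evaluate explicitly: by rotational invariance set $\xi=1$ and parameterize $z=e^{i\theta}$, so the arctan equals $\theta/2$, $\langle z,\xi\rangle=\cos\theta$, and $1+(1-\delta)^2+2(1-\delta)\cos\theta=\delta^2+4(1-\delta)\cos^2(\theta/2)$. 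The substitution $u=\pi-\theta$ reduces the integral to
$$-\int_0^{2\pi}\frac{(\pi-u)\sin(2u)}{\delta^2+4(1-\delta)\sin^2(u/2)}du.$$
The logarithmic divergence arises near $u=0$ and $u=2\pi$, where the integrand behaves like $2\pi u/(\delta^2+u^2)$ and integrates to $\pi\log((\delta^2+\epsilon^2)/\delta^2)=2\pi\log\frac1\delta+O(1)$ at each endpoint. Summing and multiplying by $\delta h'(1)$ yields $-2\pi\delta\log\frac1\delta\,h'(1)+O(\delta)$, which after rotation back to general $\xi$ is the claim.

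The main technical obstacle is the clean extraction of the $\log\frac1\delta$ coefficient in (2): one must verify that the bulk region $|u|\gtrsim 1$ and the innermost neighborhood $|u|\lesssim\delta$ each contribute only $O(\delta)$, so that all the logarithmic mass lies in the intermediate band $\delta\ll|u|\ll 1$. Once this separation is clean, the remaining bookkeeping parallels the proofs of Propositions \ref{intFracLap} and \ref{intB}.
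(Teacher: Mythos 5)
Your treatment of parts (3) and (4) coincides with the paper's: bound $\left|\left\langle z,\xi^\perp\right\rangle\right|$ by $|z+\xi|$, use Proposition \ref{f} to get $|h(f(z))-h(\xi)|=O\left(\delta/(\delta+|z+\xi|)\right)$, note the product is uniformly $O(\delta)$, and integrate against the integrable log (resp.\ bounded arctan) weight. Part (2) is also essentially the paper's argument: plug in the expansion from Proposition \ref{f}, pass to polar coordinates via $\arctan\frac{\left\langle z,\xi^\perp\right\rangle}{1+\langle z,\xi\rangle}=\theta/2$, and extract the logarithmic singularity near the antipodal point. One caution on bookkeeping: since $\theta\sin\theta\cos\theta=\frac12\theta\sin(2\theta)$, after the substitution $u=\pi-\theta$ the integral should carry a $\frac12$ that your display drops; with each endpoint contributing $2\pi\log\frac1\delta$, the corrected tally is $-\frac12\cdot 4\pi\log\frac1\delta=-2\pi\log\frac1\delta$, which then gives the stated $-2\pi\delta\log\frac1\delta\,h'(\xi)$ (the paper reaches the same total by reducing, via evenness, to twice the integral over $(0,\pi)$ and analyzing only $t=\pi$). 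Part (1) is where you genuinely diverge from the paper. The paper splits the weight $\log|z+\xi|=\log\delta+\log\frac{|z+\xi|}{\delta}$, reads off the leading term from Proposition \ref{intB}(1), and disposes of the second summand through Proposition \ref{f}'s pointwise expansion and a symmetry cancellation of the $\Theta$ term. You instead split $\langle z,\xi\rangle=-1+\frac12|z+\xi|^2$, discard the $|z+\xi|^2$ piece, and change variables $w=f(z)$ in $-\int\log|z+\xi|(h(f(z))-h(\xi))\,dz$ using the identity $|f^{-1}(w)+\xi|=\delta|w+\xi|/|1-(1-\delta)\overline\xi w|$; the $\log\delta$ part pairs directly with Proposition \ref{intFracLap}, and the residual is controlled by the reflection $w\mapsto\xi^2\overline w$, under which both the log factor and the Jacobian are invariant, so only the even part of $h(w)-h(\xi)$ survives, and that even part is $O(|w-\xi|^2)$ by $C^2$ regularity, which neutralizes the $1/(\delta^2+|w-\xi|^2)$ Jacobian singularity. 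Your route requires only Proposition \ref{intFracLap} plus the conformal structure and makes the cancellation mechanism (parity at second order) explicit, whereas the paper leans on the more refined Proposition \ref{intB}(1), which in turn was proved by a related symmetry argument; both are correct, and yours is arguably the cleaner derivation of the $\delta\log\frac1\delta$ coefficient.
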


\begin{proof}
By Proposition \ref{intFracLap} we get:
\begin{eqnarray*}
&&\int\limits_{\partial\D^2}\log|z+\xi|(h(f(z))-h(\xi))\langle z,\xi\rangle dz\\
&=&\log\delta\int\limits_{\partial\D^2}(h(f(z))-h(\xi))\langle z,\xi\rangle dz+\int\limits_{\partial\D^2}\log\frac{|z+\xi|}\delta(h(f(z))-h(\xi))\langle z,\xi\rangle dz\\
&=&-2\pi\delta\log\frac1\delta(-\Delta)^\frac12h(\xi)+O\(\delta^2\log\frac1\delta\)+\int\limits_{\partial\D^2}\log\frac{|z+\xi|}\delta(h(f(z))-h(\xi))\langle z,\xi\rangle dz.
\end{eqnarray*}
To deal with the last integral, we use Proposition \ref{f} to get:
\begin{eqnarray*}\int\limits_{\partial\D^2}\log\frac{|z+\xi|}\delta(h(f(z))-h(\xi))\langle z,\xi\rangle dz&=&\delta h'(\xi)\int\limits_{\partial\D^2}\log\frac{|z+\xi|}\delta\frac{2\left\langle z,\xi^\perp\right\rangle}{1+(1-\delta)^2+2(1-\delta)\langle z,\xi\rangle}\langle z,\xi\rangle dz\\
&+&\int\limits_{\partial\D^2}\log\frac{|z+\xi|}\delta O\(\frac{\delta^2}{(\delta+|z+\xi|)^2}\)\langle z,\xi\rangle dz;
\end{eqnarray*}
the first integral vanishes due to symmetry, hence
\begin{eqnarray*}
\int\limits_{\partial\D^2}\log\frac{|z+\xi|}\delta(h(f(z))-h(\xi))\langle z,\xi\rangle dz&=&O\(\int\limits_{\partial\D^2}\left|\log\frac{|z+\xi|}\delta\right|\frac{\delta^2}{(\delta+|z+\xi|)^2}dz\)\\
&=&O\(\int_{t=O\(\frac1\delta\)}|\log t|\frac\delta{1+t^2}dt\)\\
&=&O(\delta),
\end{eqnarray*}
which proves (1).

To get (2), recalling Proposition \ref{f},
\begin{eqnarray*}
&&\int\limits_{\partial\D^2}\arctan\frac{\left\langle z,\xi^\perp\right\rangle}{1+\langle z,\xi\rangle}(h(f(z))-h(\xi))\langle z,\xi\rangle dz\\
&=&\delta h'(\xi)\int\limits_{\partial\D^2}\arctan\frac{\left\langle z,\xi^\perp\right\rangle}{1+\langle z,\xi\rangle}\Theta(z)\langle z,\xi\rangle dz+O\(\int\limits_{\partial\D^2}\arctan\frac{\left\langle z,\xi^\perp\right\rangle}{1+\langle z,\xi\rangle}\frac{\delta^2}{(\delta+|z+\xi|)^2}\langle z,\xi\rangle dz\)\\
&=&\delta h'(\xi)\int\limits_{\partial\D^2}\arctan\frac{\left\langle z,\xi^\perp\right\rangle}{1+\langle z,\xi\rangle}\frac{2\left\langle z,\xi^\perp\right\rangle}{1+(1-\delta)^2+2(1-\delta)\langle z,\xi\rangle}\langle z,\xi\rangle dz+O\(\int\limits_{\partial\D^2}\frac{\delta^2}{(\delta+|z+\xi|)^2}dz\)\\
&=&\delta h'(\xi)\int\limits_{-\pi}^\pi\frac t2\frac{2\sin t}{1+(1-\delta)^2+2(1-\delta)\cos t}\cos tdt+O(\delta)\\
&=&2\delta h'(\xi)\int\limits_0^\pi t\frac{\sin t}{1+(1-\delta)^2+2(1-\delta)\cos t}\cos tdt+O(\delta).
\end{eqnarray*}
We are left with showing that the integral equals $\pi\log\delta+O(1)$. To this purpose, we point out that
$$\frac{\sin t}{1+(1-\delta)^2+2(1-\delta)\cos t}=\frac{\pi-t}{\delta^2+(\pi-t)^2}+O(1);$$
therefore,
\begin{eqnarray*}
\int\limits_0^\pi t\frac{\sin t}{1+(1-\delta)^2+2(1-\delta)\cos t}\cos tdt&=&\int\limits_0^\pi(\pi+O(|\pi-t|))\(\frac{\pi-t}{\delta^2+(\pi-t)^2}+O(1)\)\(-1+O\(|\pi-t|^2\)\)\\
&=&-\pi\int\limits_0^\pi\frac{\pi-t}{\delta^2+(\pi-t)^2}dt+O(1)=\pi\log\delta+O(1).
\end{eqnarray*}
To deal with (3), we observe that $\left|\left\langle z,\xi^\perp\right\rangle\right|=O(|z+\xi|)$, hence
\begin{eqnarray*}
\int\limits_{\partial\D^2}\log|z+\xi|(h(f(z))-h(\xi))\left\langle z,\xi^\perp\right\rangle dz&=&\int\limits_{\partial\D^2}O\(|\log|z+\xi|||h(f(z))-h(\xi)|\left|\left\langle z,\xi^\perp\right\rangle\right|\)dz\\
&=&O\(\int\limits_{\partial\D^2}\(1+\log\frac1{|z+\xi|}\)\delta \Theta(z)|z+\xi|dz\)\\
&=&O\(\int\limits_{\partial\D^2}\(1+\log\frac1{|z+\xi|}\)\delta dz\)=O(\delta).
\end{eqnarray*}
Similarly, we can show that
$$\arctan\frac{\left\langle z,\xi^\perp\right\rangle}{1+\langle z,\xi\rangle}(h(f(z))-h(\xi))\left\langle z,\xi^\perp\right\rangle=O(\delta),$$
which easily gives (4).
\end{proof}

\begin{proposition}\label{estZ01}
The following identities hold true:
\begin{enumerate}
\item $\int\limits_{\D^2}e^{V(z)}dz=\frac{4\pi}{\varphi(\xi)^2+K(\xi)}$,
\item $\int\limits_{\partial\D^2}\log|z-w|dz=0$ for every $w\in\partial\D^2$.
\end{enumerate}
\end{proposition}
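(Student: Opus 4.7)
The plan is to handle the two identities separately, each by a direct computation since they concern explicit integrals of elementary functions; no deep trick should be needed.

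For part (1), I would pass to polar coordinates $z = re^{i\theta}$. Using the explicit form
$$e^{V(z)} = \frac{4\varphi(\xi)^2}{\left(\varphi(\xi)^2+K(\xi)|z|^2\right)^2}$$
coming from the definition \eqref{Vbox}, the angular integration is trivial and gives a factor $2\pi$. The remaining one-dimensional integral
$$2\pi\int_0^1 \frac{4\varphi(\xi)^2\, r}{\left(\varphi(\xi)^2+K(\xi)r^2\right)^2}\, dr$$
is computed via the substitution $u = \varphi(\xi)^2 + K(\xi)r^2$, reducing it to $\int u^{-2}\, du$. Evaluating the antiderivative at the endpoints $\varphi(\xi)^2$ and $\varphi(\xi)^2 + K(\xi)$ yields the telescoping difference $K(\xi)/[\varphi(\xi)^2(\varphi(\xi)^2+K(\xi))]$, and multiplying by the prefactor $4\pi\varphi(\xi)^2/K(\xi)$ gives $4\pi/(\varphi(\xi)^2+K(\xi))$, as claimed.

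For part (2), I would first exploit the rotational invariance of the boundary: writing $w = e^{i\alpha}$ and $z = e^{i\theta}$, the integral $\int_{\partial\D^2}\log|z-w|\, dz$ depends only on $\theta - \alpha$ modulo $2\pi$, so a change of variable reduces the statement to the single case $w = 1$. Then $|e^{i\theta}-1| = 2|\sin(\theta/2)|$, and splitting the logarithm gives
$$\int_0^{2\pi}\log\left|e^{i\theta}-1\right|\, d\theta = 2\pi\log 2 + 2\int_0^{\pi}\log\sin t\, dt.$$
The last integral is the classical $-\pi\log 2$, so the two terms cancel. (Equivalently, one can invoke Jensen's formula $\int_0^{2\pi}\log|1-re^{i\theta}|\, d\theta = 2\pi\log^+|r|$, which at $r=1$ yields $0$ by continuity.)

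There is no genuine obstacle here; the only point that requires a moment of care is identity (2), where one must not try to apply the mean value property naively (since $\log|z-w|$ is singular precisely on the curve of integration when $w\in\partial\D^2$). The reduction to the standard integral $\int_0^\pi \log\sin t\, dt = -\pi\log 2$ circumvents the singularity cleanly, as the remaining integrand is in $L^1$.
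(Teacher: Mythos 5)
For part (1), your computation is identical to the paper's: pass to polar coordinates and evaluate the resulting elementary radial integral by the substitution $u=\varphi(\xi)^2+K(\xi)r^2$. Nothing to add.

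For part (2), your route is genuinely different from the paper's, and both work. You reduce to $w=1$ by rotational invariance, write $|e^{\imath\theta}-1|=2|\sin(\theta/2)|$, and invoke the classical value $\int_0^\pi\log\sin t\,dt=-\pi\log 2$ to get the exact cancellation $2\pi\log 2 - 2\pi\log 2 = 0$; the Jensen-formula remark is a clean third route to the same number. The paper instead avoids any explicit integration: it observes that $-\frac1\pi\int_{\partial\D^2}\log|z-w|g(w)\,dw$ is the Green-type representation of the solution $W_g$ to the Neumann problem $-\Delta W_g=0$ in $\D^2$, $\partial_\nu W_g=g-\frac1{2\pi}\int_{\partial\D^2}g$, and then takes $g\equiv-\pi$, for which the Neumann data vanishes and hence $W_{-\pi}\equiv 0$, yielding the identity for $z\in\D^2$ (and then on $\partial\D^2$ by continuity of the logarithmic potential up to the boundary). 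Your argument is more elementary and self-contained, and it works directly on the boundary without passing through an interior harmonic extension; the paper's argument is slicker and, more importantly, it runs through exactly the same Green's-function machinery that underlies the definition of the correction $W$ in \eqref{Wbox}--\eqref{eqw}, so it is conceptually well-matched to the rest of the construction. Either proof is correct; the only small caveat in your write-up is that $\int_{\partial\D^2}\cdot\,dz$ should be read as the arc-length integral $\int_0^{2\pi}\cdot\,d\theta$, which is what makes the total length $2\pi$ enter in the form $2\pi\log 2$, and you have used it consistently.
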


\begin{proof}
By definition
$$\int\limits_{\D^2}e^V=\int\limits_{\D^2}\frac{4\varphi(\xi)^2}{\(\varphi(\xi)^2+K(\xi)|z|^2\)^2}dz=8\pi\varphi(\xi)^2\int\limits_0^1\frac r{\(\varphi(\xi)^2+K(\xi)r^2\)^2}dr=\frac{4\pi}{\varphi(\xi)^2+K(\xi)}.$$
To get (2), we observe that for any $g\in C\(\partial\D^2\)$, the solution $W_g$ to
\begin{equation}\label{eqg}\begin{cases}-\Delta W_g=0&\hbox{in}\;\D^2\\
\partial_\nu W_g=g-\frac1{2\pi}\int\limits_{\partial\D^2}g&\hbox{on}\;\partial\D^2,\end{cases}
\end{equation}
is given by the Green's formula (see also \eqref{eqw})
$$W_g(z)=-\frac1\pi\int\limits_{\partial\D^2}\log|z-w|g(w)dw.$$
Taking $g\equiv-\pi$, the solution to \eqref{eqg} is clearly $W_{-\pi}\equiv0$, and thus
$$0=W_{-\pi}(z)=\int\limits_{\partial\D^2}\log|z-w|dw\quad\quad\quad\forall z\in\D^2.$$
\end{proof}

\begin{proposition}\label{wZ0}The following identities hold true:
\begin{enumerate}
\item $\int\limits_{\D^2}\frac{(1-\delta)|w|^2-1+\delta w_1}{|1+(1-\delta)w|^2}\frac{4\varphi(\xi)^2}{\(\varphi(\xi)^2+K(\xi)|w|^2\)^2}dw=-\frac{4\pi}{\varphi(\xi)^2+K(\xi)}$,
\item $\int\limits_{\D^2}\frac{(2-\delta)w_2}{|1+(1-\delta)w|^2}\frac{4\varphi(\xi)^2}{\(\varphi(\xi)^2+K(\xi)|w|^2\)^3}dw=0$.
\end{enumerate}
\end{proposition}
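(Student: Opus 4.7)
The plan is to follow the pattern used in Proposition \ref{estKgrad}: decompose the rational factor algebraically, identify one piece as a radial integral already computed in Proposition \ref{estZ01}, and reduce the remaining piece to an explicit angular integral via formula \eqref{resid}. Concretely, I would begin from the identity
$$
\frac{(1-\delta)|w|^2-1+\delta w_1}{|1+(1-\delta)w|^2}=\frac1{1-\delta}-\frac{2-\delta}{1-\delta}\cdot\frac{1+(1-\delta)w_1}{|1+(1-\delta)w|^2},
$$
which is verified by clearing denominators (the numerator on the right factors as $(1-\delta)\bigl[(1-\delta)|w|^2-1+\delta w_1\bigr]$). Multiplying by the radial kernel $\tfrac{4\varphi(\xi)^2}{(\varphi(\xi)^2+K(\xi)|w|^2)^2}$ and integrating, the first summand contributes $\tfrac1{1-\delta}\cdot\tfrac{4\pi}{\varphi(\xi)^2+K(\xi)}$ by Proposition \ref{estZ01}(1).

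For the second summand I would pass to polar coordinates $w=re^{\imath t}$, so that the inner integral becomes
$$
J(r):=\int_{-\pi}^\pi\frac{1+(1-\delta)r\cos t}{1+2(1-\delta)r\cos t+(1-\delta)^2r^2}\,dt.
$$
The key observation is that $J(r)\equiv 2\pi$ for every $r\in(0,1)$: writing $2+2(1-\delta)r\cos t=\bigl[1+2(1-\delta)r\cos t+(1-\delta)^2r^2\bigr]+\bigl[1-(1-\delta)^2r^2\bigr]$ one gets
$$
J(r)=\pi+\frac{1-(1-\delta)^2r^2}{2}\int_{-\pi}^\pi\frac{dt}{1+2(1-\delta)r\cos t+(1-\delta)^2r^2},
$$
and the remaining integral is computed by the formula \eqref{resid}, giving $\tfrac{2\pi}{1-(1-\delta)^2r^2}$, which cancels the prefactor exactly. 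Hence the radial integration over $r$ reproduces $\int_{\D^2}\tfrac{4\varphi(\xi)^2}{(\varphi(\xi)^2+K(\xi)|w|^2)^2}dw=\tfrac{4\pi}{\varphi(\xi)^2+K(\xi)}$, and the second summand contributes $\tfrac{2-\delta}{1-\delta}\cdot\tfrac{4\pi}{\varphi(\xi)^2+K(\xi)}$. Subtracting, the prefactors collapse to $\tfrac{1-(2-\delta)}{1-\delta}=-1$, yielding $-\tfrac{4\pi}{\varphi(\xi)^2+K(\xi)}$, which is (1).

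For (2), no computation is needed: the integrand $\tfrac{(2-\delta)w_2}{|1+(1-\delta)w|^2}\cdot\tfrac{4\varphi(\xi)^2}{(\varphi(\xi)^2+K(\xi)|w|^2)^3}$ is odd under the reflection $w_2\mapsto -w_2$, since $|1+(1-\delta)w|^2=1+2(1-\delta)w_1+(1-\delta)^2|w|^2$ and $|w|^2$ are both invariant under this reflection, while the numerator changes sign. As $\D^2$ is symmetric under $w_2\mapsto -w_2$, the integral vanishes.

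The only subtle point in the whole argument is recognizing that $J(r)$ is independent of $r$, which is what makes (1) hold as an exact equality (not merely up to $O(\delta)$) and produces the clean cancellation between the two summands.
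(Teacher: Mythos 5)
Your proof is correct and essentially mirrors the paper's own computation. Both proofs pass to polar coordinates and reduce the angular integral to $\int_{-\pi}^\pi \frac{dt}{a+\cos t}$, evaluated via \eqref{resid}, and both exploit an exact (not merely $O(\delta)$) algebraic cancellation. The only difference is in bookkeeping: you first decompose the rational factor as $\tfrac1{1-\delta}-\tfrac{2-\delta}{1-\delta}\cdot\tfrac{1+(1-\delta)w_1}{|1+(1-\delta)w|^2}$, observe that the inner angular integral $J(r)$ of the second piece is identically $2\pi$, and then identify both radial integrals with Proposition~\ref{estZ01}(1); the paper instead rewrites the full integrand in polar coordinates as $\tfrac1{2(1-\delta)}\bigl(\delta-(2-\delta)\tfrac{\cdots}{a+\cos t}\bigr)$ and evaluates the combined inner integral directly to $4\pi(\delta-1)$ before doing the radial integral. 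Your observation that $J(r)\equiv 2\pi$ is a tidy way to make the exact cancellation transparent, and your corrected coefficient $\tfrac{2-\delta}{1-\delta}$ in the decomposition is worth noting (Proposition~\ref{estKgrad} in the paper writes $(2-\delta)$, which is a harmless typo there since only $O(\delta)$ accuracy is needed, but would matter in the present exact identity). Part (2) by oddness in $w_2$ is exactly what the paper says.
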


\begin{proof}
Changing to polar coordinates,
\begin{eqnarray*}
&&\int\limits_{\D^2}\frac{(1-\delta)|w|^2-1+\delta w_1}{|1+(1-\delta)w|^2}\frac{4\varphi(\xi)^2}{\(\varphi(\xi)^2+K(\xi)|w|^2\)^2}dw\\
&=&\int\limits_0^1\int\limits_{-\pi}^\pi\frac{(1-\delta)r^2-1+\delta r\cos t}{1+2(1-\delta)r\cos t+(1-\delta)^2r^2}\frac{4\varphi(\xi)^2}{\(\varphi(\xi)^2+K(\xi)r^2\)^2}rdrdt\\
&=&\frac2{1-\delta}\varphi(\xi)^2\int\limits_0^1\frac r{\(\varphi(\xi)^2+K(\xi)r^2\)^2}\(\int\limits_{-\pi}^\pi\(\delta-(2-\delta)\frac{\frac{1-(1-\delta)^2r^2}{2(1-\delta)r}}{\frac{1+(1-\delta)^2r^2}{2(1-\delta)r}+\cos t}\)dt\)dr\\
&=&\frac2{1-\delta}\varphi(\xi)^2\int\limits_0^1\frac r{\(\varphi(\xi)^2+K(\xi)r^2\)^2}4\pi(\delta-1)dr\\
&=&-\frac{4\pi}{\varphi(\xi)^2+K(\xi)},
\end{eqnarray*}
where we used \eqref{resid}.

The second identity follows by oddness with respect to $w_2$.
\end{proof}

\begin{proposition}\label{z1z2}The following identities hold true:
\begin{enumerate}
\item $2K(\xi)\int\limits_{\D^2}e^V\mathcal Z_1^2=2K(\xi)\int\limits_{\D^2}e^V\mathcal Z_2^2=\pi\frac{2K(\xi)\(3\varphi(\xi)^2+K(\xi)\)}{3\varphi(\xi)^2\(\varphi(\xi)^2+K(\xi)\)^3}$.
\item $\frac{h(\xi)2\varphi(\xi)}{\varphi(\xi)^2+K(\xi)}\int\limits_{\partial\D^2}\mathcal Z_1^2=\frac{h(\xi)2\varphi(\xi)}{\varphi(\xi)^2+K(\xi)}\int\limits_{\partial\D^2}\mathcal Z_2^2=\pi\frac{\varphi(\xi)^2-K(\xi)}{\(\varphi(\xi)^2+K(\xi)\)^3}$.
\end{enumerate}
\end{proposition}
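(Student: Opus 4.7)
The plan is to exploit the rotational/reflective symmetry of the kernels $\mathcal Z_1,\mathcal Z_2$ together with the algebraic identity $\mathcal Z_1^2+\mathcal Z_2^2=\frac{|z|^2}{(\varphi(\xi)^2+K(\xi)|z|^2)^2}$, which collapses each of the four integrals in the statement to a single one-dimensional computation. Without loss of generality I would first rotate coordinates so that $\xi=1$; then $\mathcal Z_1=\frac{z_1}{\varphi(\xi)^2+K(\xi)|z|^2}$ and $\mathcal Z_2=\frac{z_2}{\varphi(\xi)^2+K(\xi)|z|^2}$. Since $e^V$ is radial and the disk (and its boundary) are invariant under the reflection $z_1\leftrightarrow z_2$, one immediately has
$$\int_{\D^2}e^V\mathcal Z_1^2=\int_{\D^2}e^V\mathcal Z_2^2\quad\text{and}\quad\int_{\partial\D^2}\mathcal Z_1^2=\int_{\partial\D^2}\mathcal Z_2^2,$$
so in each case it suffices to compute the sum and divide by two.

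For part (2), on $\partial\D^2$ one has $|z|=1$, hence $\mathcal Z_1^2+\mathcal Z_2^2=\frac{1}{(\varphi(\xi)^2+K(\xi))^2}$ is constant, giving $\int_{\partial\D^2}\mathcal Z_i^2=\frac{\pi}{(\varphi(\xi)^2+K(\xi))^2}$. Multiplying by $\frac{h(\xi)2\varphi(\xi)}{\varphi(\xi)^2+K(\xi)}$ yields $\frac{2\pi h(\xi)\varphi(\xi)}{(\varphi(\xi)^2+K(\xi))^3}$, and the claimed form follows from the algebraic identity
$$2h(\xi)\varphi(\xi)=\varphi(\xi)^2-K(\xi),$$
which is a direct consequence of the definition $\varphi(\xi)=h(\xi)+\sqrt{h(\xi)^2+K(\xi)}$ (squaring $(\varphi-h)^2=h^2+K$).

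For part (1), passing to polar coordinates gives
$$\int_{\D^2}e^V(\mathcal Z_1^2+\mathcal Z_2^2)=8\pi\varphi(\xi)^2\int_0^1\frac{r^3}{(\varphi(\xi)^2+K(\xi)r^2)^4}\,dr,$$
so $\int_{\D^2}e^V\mathcal Z_i^2=4\pi\varphi(\xi)^2\int_0^1\frac{r^3}{(\varphi(\xi)^2+K(\xi)r^2)^4}\,dr$. I would evaluate this elementary integral by the substitution $u=\varphi(\xi)^2+K(\xi)r^2$, which turns it into $\frac{1}{2K(\xi)^2}\int_{\varphi(\xi)^2}^{\varphi(\xi)^2+K(\xi)}(u^{-3}-\varphi(\xi)^2u^{-4})\,du$. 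The antiderivatives are $-\tfrac{1}{2}u^{-2}+\tfrac{\varphi(\xi)^2}{3}u^{-3}$; evaluating and combining over a common denominator shows that the contribution at the lower endpoint cancels the terms coming from expanding $(\varphi(\xi)^2+K(\xi))^3$, and the numerator reduces to $K(\xi)^2(3\varphi(\xi)^2+K(\xi))$. This gives
$$\int_0^1\frac{r^3}{(\varphi(\xi)^2+K(\xi)r^2)^4}\,dr=\frac{3\varphi(\xi)^2+K(\xi)}{12\varphi(\xi)^4(\varphi(\xi)^2+K(\xi))^3},$$
and multiplying by $8K(\xi)\pi\varphi(\xi)^2$ yields the stated value.

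There is no real obstacle here: once one observes the reduction via symmetry and the sum identity, the only remaining task is a careful partial-fraction-style computation, where the mild technicality is checking that the negative powers at the lower endpoint cancel against the expansion of $(\varphi^2+K)^3$ to produce the compact numerator $K^2(3\varphi^2+K)$. The use of the relation $\varphi^2-K=2h\varphi$ for (2) is the only ``non-mechanical'' step.
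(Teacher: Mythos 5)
Your proof is correct and follows essentially the same path as the paper: both reduce to a radial one-dimensional integral via symmetry (the paper computes $\int_{-\pi}^{\pi}\cos^2 t\,dt=\pi$, while you use $\mathcal Z_1^2+\mathcal Z_2^2=|z|^2/(\varphi^2+K|z|^2)^2$ and divide by two, which is the same angular simplification), both carry out an elementary partial-fraction/substitution computation of $\int_0^1 r^3(\varphi^2+Kr^2)^{-4}\,dr$, and both invoke the identity $2h(\xi)\varphi(\xi)=\varphi(\xi)^2-K(\xi)$ coming from the definition of $\varphi$ for part (2). Your numerator computation $K^2(3\varphi^2+K)$ and the final constants check out against the paper's values.
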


\begin{proof}
First of all, we notice that, since $\mathcal Z_2(z)=\mathcal Z_1(-\imath z)$, then the two interior integrals will be the same and also the two boundary integrals, so we will only prove them for $\mathcal Z_1$.\\
Integrating in polar coordinates we get:
\begin{eqnarray*}
&&2K(\xi)\int\limits_{\D^2}e^V\mathcal Z_1^2\\
&=&8K(\xi)\varphi(\xi)\int_0^1\(\int_{-\pi}^\pi\frac{r^2\cos^2t}{\(\varphi(\xi)^2+K(\xi)r^2\)^4}rdt\)dr\\
&=&8K(\xi)\varphi(\xi)\pi\int_0^1\frac{r^3}{\(\varphi(\xi)^2+K(\xi)r^2\)^4}dr\\
&=&8K(\xi)\varphi(\xi)\pi\int_0^1\(\frac1{K(\xi)}\frac r{\(\varphi(\xi)^2+K(\xi)r^2\)^3}-\frac{\varphi(\xi)^2}{K(\xi)}\frac r{\(\varphi(\xi)^2+K(\xi)r^2\)^4}\)dr\\
&=&8K(\xi)\varphi(\xi)\pi\(\frac1{K(\xi)}\frac{2\varphi(\xi)^2+K(\xi)}{4\varphi(\xi)^4\(\varphi(\xi)^2+K(\xi)\)^2}-\frac{\varphi(\xi)^2}{K(\xi)}\frac{3\varphi(\xi)^4+3\varphi(\xi)^2K(\xi)+K(\xi)^2}{6\varphi(\xi)^6\(\varphi(\xi)^2+K(\xi)\)^3}\)\\
&=&\pi\frac{2K(\xi)\(3\varphi(\xi)^2+K(\xi)\)}{3\varphi(\xi)^2\(\varphi(\xi)^2+K(\xi)\)^3}.
\end{eqnarray*}
The second identity follows from
\begin{eqnarray*}
\frac{h(\xi)2\varphi(\xi)}{\varphi(\xi)^2+K(\xi)}\int\limits_{\partial\D^2}\mathcal Z_1^2
&=&\frac{\varphi(\xi)^2-K(\xi)}{\(\varphi(\xi)^2+K(\xi)\)^3}\int_{-\pi}^\pi\cos^2tdt=\pi\frac{\varphi(\xi)^2-K(\xi)}{\(\varphi(\xi)^2+K(\xi)\)^3}.
\end{eqnarray*}
\end{proof}

\section*{Acknowledgments}

The authors are extremely grateful to David Ruiz for suggesting the problem, for reading the manuscript and for many useful conversations and suggestions.

\end{document}